\documentclass{amsart}

\usepackage{amsmath}
\usepackage{amsfonts}
\usepackage{amssymb}
\usepackage{amsthm}
\usepackage{mathtools}
\usepackage{tikz}
\usepackage{tikz-cd}
\usepackage{hyperref}
\usepackage{enumitem}
\usepackage{graphicx}
\usepackage{subcaption}

\usetikzlibrary{arrows}
\usetikzlibrary{decorations.pathreplacing}
\usetikzlibrary{calligraphy}
\usetikzlibrary{external}

\renewcommand{\H}{\mathrm{H}}

\tikzset{knot/.style={black,very thick,preaction={draw,white,line width=5pt}}}

\colorlet{dark green}{green!50!black}

\DeclareMathOperator{\id}{id}

\newcommand{\F}{\ensuremath{\mathbb{F}} }
\newcommand{\N}{\ensuremath{\mathbb{N}} }
\newcommand{\Z}{\ensuremath{\mathbb{Z}} }

\newcommand{\R}{\ensuremath{\mathbb{R}} }

\newcommand{\B}{\ensuremath{\mathbb{B}} }

\newcommand{\gr}{\mathrm{gr}}

\newcommand{\Br}{\mathrm{Br}}

\newtheorem{theorem}{Theorem}[section]

\newtheorem{proposition}[theorem]{Proposition}

\newtheorem{lemma}[theorem]{Lemma}

\theoremstyle{definition}
\newtheorem{definition}[theorem]{Definition}

\newtheorem{claim}[theorem]{Claim}
\theoremstyle{remark}
\newtheorem*{remark}{Remark}
\newtheorem{example}[theorem]{Example}

\newcommand{\Ras}{\mathrm{Ras}}
\newcommand{\Rou}{\mathrm{Rou}}
\newcommand{\Kho}{\mathrm{Kho}}

\title{Reduced Khovanov-Rozansky homology}
\author{David Popovi\'c}
\address{Department of Mathematics\\ University of California, Los Angeles}
\email{dpopovic@math.ucla.edu}
\date{June 2026}

\begin{document}
\begin{abstract}
We introduce a variant of reduced Khovanov-Rozansky homology defined in terms of the variables $\alpha_1, \dots, \alpha_{n-1}$ that correspond to the regions between the strands. We rigorously reestablish several folklore results in terms of our framework, including the equivalence between Khovanov's and Rasmussen's approaches. Furthermore, we show that Rasmussen's chain complex is a free resolution of the Rouquier chain complex.
\end{abstract}
\maketitle
\section{Introduction}
The Alexander polynomial \cite{alexander1928topological} occupies a special place in low-dimensional topology as the first, and for many decades only, polynomial link invariant. It was Alexander who first observed that the polynomial satisfies the skein relation, but the significance of this realization was only revealed 70 years later with the discovery of the Jones polynomial \cite{jones1997polynomial}. This development sparked a frenzy of activity in pursuit of the most general link invariant that would generalize both the Alexander and Jones polynomials. It was discovered simultaneously by 8 researchers \cite{freyd1985new, przytycki2016invariants} and is commonly referred to as the HOMFLY or HOMFLYPT polynomial due to their last names' initials.

\vspace{1em}

In the $21^\text{st}$ century, the research in the field has shifted from studying the polynomials directly to categorifying them into knot homology theories. Following Khovanov's categorification of the Jones polynomial \cite{khovanov2000categorification} and Ozsv\'ath-Szabo's categorification of the Alexander polynomial \cite{ozsvath2004holomorphicKnots}, it became evident that the HOMFLY polynomial deserved a categorification as well. The existence and some structural predictions about such a homology theory were first conjectured in \cite{dunfield2006superpolynomial} and later constructed by Khovanov-Rozansky in \cite{khovanov2008matrix} based on their earlier related work \cite{khovanov2004matrix}. There exist at least three equivalent approaches to this theory:
\begin{enumerate}
    \item Khovanov and Rozansky's original construction \cite{khovanov2008matrix} using the language of matrix factorizations,
    \item Rasmussen's reformulation \cite{rasmussen2016some} using the language of double chain complexes, which added several important features to theory, and 
    \item Khovanov's reformulation \cite{khovanov2007triply} of the theory in terms of Hochschild homology of Soergel bimodules.
\end{enumerate}
The theory is most commonly known as the Khovanov-Rozansky homology, named after its
originators, or the HOMFLY homology. The three approaches are equivalent, but the precise statements and rigorous proofs of results to that effect, while understood by the experts, do not always appear in the literature. Moreover, every justification of equivalence between Rasmussen's and Khovanov's constructions passes through Khovanov and Rozansky's original construction using matrix factorizations. This is not ideal, because it makes it difficult to simultaneously study certain aspects of Khovanov-Rozansky homology that only exist in Rasmussen's or only in Khovanov's setting. For example, such phenomena include spectral sequences to $\mathfrak{sl}_k$ homology \cite{rasmussen2016some} and $y$-ification \cite{gorsky2022hilbert}. One of the results of this paper is a detailed and rigorous explanation of the direct equivalence between Rasmussen's and Khovanov's constructions.

For technical reasons, we work over the field $\F$ with $\mathrm{char}(\F) \neq 2$.
\begin{theorem}\label{thm:equivalence Ras and Kho homology level}
    Let $K$ be a knot and let $\widehat{\mathit{H}}_{\Kho}(K)$ and $\widehat{\mathit{H}}_{\Ras}(K)$ denote the reduced Khovanov-Rozansky homologies calculated using Khovanov's and Rasmussen's constructions respectively. Then $\widehat{\mathit{H}}_{\Kho}(K) \cong \widehat{\mathit{H}}_{\Ras}(K)$ as $\Z\oplus\Z\oplus\Z$-graded vector spaces over $\F$.
\end{theorem}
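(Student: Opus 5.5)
The plan is to compare the two theories at the chain level on a single braid closure, and to do the comparison inside the reduced algebra generated by the region variables $\alpha_1,\dots,\alpha_{n-1}$. Write $K$ as the closure of a braid $\beta\in\Br_n$. On Khovanov's side, the Rouquier complex $C_{\Rou}(\beta)$ is a complex of Soergel bimodules over $R=\F[x_1,\dots,x_n]$. We pass to the reduced version by taking the quotient or subring generated by $\alpha_i = x_i - x_{i+1}$; this is where $\mathrm{char}(\F)\neq 2$ is needed, so that the symmetric and antisymmetric parts split cleanly. We then apply Hochschild homology $\mathrm{HH}_\bullet$ termwise and take homology. On Rasmussen's side, each crossing is assigned a Koszul-type double complex over the same ring. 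For the closure, the edge identifications are imposed by Koszul differentials $x_{\mathrm{out}}-x_{\mathrm{in}}$, which become $\alpha$-linear after reduction.

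\textbf{Step 1 (local equivalence).} For a single crossing $\sigma_i^{\pm1}$, I would identify Rasmussen's local double complex as a free (Koszul) resolution, over the enveloping algebra $R\otimes R$, of each term of the Rouquier complex. The term $B_i=R\otimes_{R^{s_i}}R$ is resolved by the Koszul complex on the two elements $x_i+x_{i+1}-x_i'-x_{i+1}'$ and $x_ix_{i+1}-x_i'x_{i+1}'$. The identity bimodule $R$ is resolved by the Koszul complex on $x_i-x_i'$ and $x_{i+1}-x_{i+1}'$. I would then check that Rasmussen's edge maps lift the Rouquier maps $B_i\to R$ and $R\to B_i$, and that the total complex over $\beta$ is the tensor product of these local pieces. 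This gives a quasi-isomorphism
\[
C_{\Ras}(\beta)\simeq \mathrm{Tot}\bigl(\text{Koszul resolutions of } C_{\Rou}(\beta)\bigr),
\]
that is, $C_{\Ras}(\beta)$ is a free resolution of $C_{\Rou}(\beta)$, which is the second claim of the abstract.

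\textbf{Step 2 (closure).} Closing the braid corresponds, on Rasmussen's side, to tensoring over $R\otimes R$ with $R$. Since the complex from Step 1 is free, this tensor product computes $\mathrm{Tor}^{R\otimes R}(R,-)=\mathrm{HH}_\bullet(-)$ applied termwise to $C_{\Rou}(\beta)$. A spectral sequence (or a direct filtration argument on the bicomplex), filtered by the Rouquier homological degree, then shows that the $E_2$ page of Rasmussen's complex is Khovanov's triply graded homology. I would pin down the three gradings (the $q$-grading from polynomial degree, the Hochschild degree from the Koszul degree, and the homological degree) and match them directly.

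\textbf{Main obstacle.} Step 2 only gives the statement up to a spectral sequence; we also need that it degenerates, or better, an honest chain homotopy equivalence. I would avoid degeneration arguments and instead build the comparison map explicitly as a map of double complexes. Termwise, this map is the augmentation from each Koszul resolution to the bimodule it resolves, post-composed with $\mathrm{HH}$. Then I would use that a map of bounded double complexes which is a quasi-isomorphism on every column induces a quasi-isomorphism on total complexes; here each column is the Koszul resolution of a fixed Rouquier term, so the column condition is exactly what Step 1 provides. The delicate point is that Khovanov's theory takes the homology of $\mathrm{HH}_\bullet$ with respect to the Rouquier differential, whereas Rasmussen's theory uses a total complex with a $d_+ + d_-$ structure. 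I would therefore set up Rasmussen's complex so that his second differential is precisely the Rouquier direction, and check on the generators $\alpha_i$ that the reduced versions of his $d_v$ and $d_h$ match this identification. Finally I would verify that reduction commutes with all of these constructions: the quotient by the line $x_1+\dots+x_n$ is free and flat, so the reduced statement follows from the unreduced one, giving the isomorphism of $\Z\oplus\Z\oplus\Z$-graded vector spaces.
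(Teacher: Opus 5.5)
You have the paper's architecture: local resolutions, tensoring up along the braid, and closing by $\otimes_{R\otimes R}R$. But the step that carries all the weight, freeness, is asserted rather than proved, and as you state it it is false.

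\textbf{The local complexes are not free.} In $C_{\Ras}(\sigma_i^{\pm 1})$ the linear relation is not a Koszul direction. It is built into the ring $Q_{\sigma_i^{\pm1}}=\F[\alpha_1,\dots,\alpha_n]$ via $\alpha_i'=\alpha_n$, $\alpha_{i\pm1}'=\alpha_{i\pm1}+\frac{1}{2}(\alpha_i-\alpha_n)$ and $\alpha_j'=\alpha_j$. Each row has a single arrow, $\alpha_i-\alpha_i'$ or $\alpha_i^2-\alpha_i'^2$. So every term is a copy of a proper quotient of $\F[\alpha_1,\dots,\alpha_{n-1},\alpha_1',\dots,\alpha_{n-1}']$, which is not free for $n\geq 3$. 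Step 1 therefore gives a resolution but not a free one, and the sentence in Step 2 saying $-\otimes_{R\otimes R}R$ computes $\mathrm{Tor}$ has nothing behind it.

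\textbf{The knot hypothesis is never used.} Without it the conclusion fails, and your argument applies verbatim to $\sigma_1\in\Br_3$. Each row of $C_{\Ras}(\sigma_1)$ has two terms, so after $\otimes_{R\otimes R}R$ it sees only Hochschild degrees $0$ and $1$. But $\mathrm{HH}(\B_1)$ over $\F[\alpha_1,\alpha_2]$ is nonzero in Hochschild degree $2$.

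\textbf{The missing idea is the paper's global freeness argument.} If $\overline{\beta}$ is a knot, every $\sigma_j$ occurs in $\beta$. So each top region is a new region, and each $\alpha_j'$ is one of the dual variables $\alpha_k^\vee$ with $k\geq n$. The paper then shows, by induction on crossings, that $\{\alpha_1,\dots,\alpha_{n-1},\alpha_n^\vee,\dots,\alpha_{n-1+c}^\vee\}$ is a basis of the linear forms of $Q_\beta$; the flow lemma on dual variables controls the basis exchange. Hence $Q_\beta$ is a polynomial ring over $\F[\alpha_1,\dots,\alpha_{n-1},\alpha_1',\dots,\alpha_{n-1}']$, and $C_{\Ras}(\beta)$, being free over $Q_\beta$, is free over $R\otimes R$. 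If you instead meant to add Koszul differentials $\alpha_j-\alpha_j'$ at the closure, you no longer need freeness. But then you are not computing with Rasmussen's complex, where closing is the plain tensor product, and comparing the two needs the same Tor-vanishing, i.e.\ freeness.

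\textbf{Your ``main obstacle'' is misidentified.} Both $\widehat{\mathit{H}}_{\Kho}$ and $\widehat{\mathit{H}}_{\Ras}$ are iterated homologies: first in the Hochschild ($d_+$) direction, then in the Rouquier (vertical) direction. They are the $E_2$ pages of the filtration by Rouquier degree, so nothing needs to degenerate. A quasi-isomorphism of total complexes is the wrong target, since the total complex mixes $\gr_H$ and $\gr_R$ and cannot give a triply graded isomorphism.

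What you need is that each row of $C_{\Ras}(\beta)$ is a free resolution of the corresponding term of $C_\Rou(\beta)$, with vertical maps lifting the Rouquier differential.
\begin{itemize}
\item The resolution part is the paper's local lemma, plus the fact that each $C_{\Ras}(\sigma_i^{\pm1})$ is free over $\F[\alpha_1,\dots,\alpha_{n-1}]$ and over $\F[\alpha_1',\dots,\alpha_{n-1}']$. Hence gluing by $\otimes_{\F[\gamma_1,\dots,\gamma_{n-1}]}$ is a derived tensor product.
\item The freeness part is the knot-specific argument above.
\end{itemize}
After $\otimes_{R\otimes R}R$, the $d_+$-homology of each row is $\mathrm{HH}$ of the corresponding Rouquier term, and the vertical differential induces $\mathrm{HH}(d)$. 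Matching the three gradings is then bookkeeping. Your detour through the unreduced theory is unnecessary, since $C_{\Ras}(\beta)$ is already built from region variables.
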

To establish this isomorphism, we introduce a somewhat original reformulation of reduced Khovanov-Rozansky homology -- both Khovanov's and Rasmussen's constructions are primarily intended for \emph{middle} or \emph{unreduced} Khovanov-Rozansky homology and as such assign variables $X_1, \dots, X_n$ to the strands of the braid. Passing to the reduced homology is possible by setting $X_i = 0$ for some $i$ at an appropriate point in the construction. However, this is not very elegant since it breaks the symmetry between the $X_i$ which at least \emph{a priori} raises some questions about the well-definedness of the resulting construction. Perhaps with this in mind, Khovanov \cite{khovanov2007triply} suggested replacing the variables $X_1, \dots, X_n$ with the variables $X_2-X_1, \dots, X_n-X_{n-1}$, but the program has never been carried out. Following his approach, we set $\alpha_i = X_{i+1}-X_i$ and interpret them as marking the regions between the strands instead, as in Figure \ref{fig:alphas_instead_of_xs}. This maintains the symmetry in the formulas, which frequently look much nicer, are more easily motivated and are more pleasant to calculate with.
\begin{figure}[t]
    \centering
    \begin{tikzpicture}
        \draw[red, ->, >=latex] (1.5, -0.25) -- (2, -0.25);
        \draw[red, ->, >=latex] (0.5, -0.25) -- (1, -0.25);
        \draw[red, ->, >=latex] (-0.5, -0.25) -- (0, -0.25);
        \draw[red, <-, >=latex] (-1, -0.25) -- (-0.5, -0.25);
        \draw[red, <-, >=latex] (0, -0.25) -- (0.5, -0.25);
        \draw[red, <-, >=latex] (1, -0.25) -- (1.5, -0.25);
        \draw[knot] (1,0) to[out=90,in=-90] (0,1);
        \draw[knot] (0,0) to[out=90,in=-90] (1,1);
        \draw[knot] (1,1) to[out=90,in=-90] (2,2);
        \draw[knot] (1,2) to[out=-90,in=90] (2,1);
        \draw[knot] (-1,1) to[out=90,in=-90] (0,2);
        \draw[knot] (0,1) to[out=90,in=-90] (-1,2);

        \draw[ultra thick] (0,0) to (0,-1);
        \draw[ultra thick] (1,0) to (1,-1);
        \draw[ultra thick] (2,1) to (2,-1);
        \draw[ultra thick] (-1,1) to (-1,-1);

        \draw[red] (-0.5,-0.25) node[anchor = north]{$\alpha_1$};
        \draw[red] (0.5,-0.25) node[anchor = north]{$\alpha_2$};
        \draw[red] (1.5,-0.25) node[anchor = north]{$\alpha_3$};
        \filldraw[blue] (-1,-1) circle (2pt) node[anchor=north, yshift=-2pt]{$X_1$};
        \filldraw[blue] (0,-1) circle (2pt) node[anchor=north, yshift=-2pt]{$X_2$};
        \filldraw[blue] (1,-1) circle (2pt) node[anchor=north, yshift=-2pt]{$X_3$};
        \filldraw[blue] (2,-1) circle (2pt) node[anchor=north, yshift=-2pt]{$X_4$};
        \filldraw[] (-1,2) circle (2pt);
        \filldraw[] (0,2) circle (2pt);
        \filldraw[] (1,2) circle (2pt);
        \filldraw[] (2,2) circle (2pt);
    \end{tikzpicture}
    \caption{Khovanov-Rozansky homology has been historically defined in terms of the variables $X_i$ corresponding to the strands of the braid. Our construction defines it in terms of the variables $\alpha_i$ corresponding to the regions between the braids, which makes many formulas symmetric and overall simpler.}
    \label{fig:alphas_instead_of_xs}
\end{figure}

Theorem \ref{thm:equivalence Ras and Kho homology level} is based on a stronger result, the claim that suitably interpreted, Rasmussen's chain complex is a free resolution of the Rouquier chain complex. This is an original result and it is specific to both the reduced setting and to knots.
\begin{theorem}\label{thm:equivalence Ras and Kho}
Let $\beta \in \Br_n$ be a braid whose closure is a knot. Then $C_{\Ras}(\beta)$ is a free resolution of $C_\Rou(\beta)$ as chain complexes over $\F[\alpha_1, \dots, \alpha_{n-1}, \alpha_1', \dots, \alpha_{n-1}']$.
\end{theorem}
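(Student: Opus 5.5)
The strategy is to view $C_{\Ras}(\beta)$ as a double complex. One direction is the cube-of-resolutions (crossing) direction, matching the Rouquier complex. The other is a Koszul-type direction, built from the edge relations $\alpha_i' - \alpha_i$ (suitably modified at crossings) over $R=\F[\alpha_1,\dots,\alpha_{n-1},\alpha_1',\dots,\alpha_{n-1}']$. For each vertex $v$ of the cube, the column $K_v$ of $C_{\Ras}(\beta)$ should be a complex of free $R$-modules. I would prove that $K_v$ has homology concentrated in a single Koszul degree, where it is isomorphic to the Soergel bimodule $B_v$ sitting at vertex $v$ of $C_{\Rou}(\beta)$. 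The augmentation maps $K_v \to B_v$ should commute with the horizontal (crossing) differentials. Then a standard spectral sequence argument, or the acyclic assembly lemma applied to the cone of the augmentation filtered by cube degree, gives a quasi-isomorphism of total complexes $C_{\Ras}(\beta) \to C_{\Rou}(\beta)$. Since $C_{\Ras}(\beta)$ is a bounded complex of free $R$-modules, this is exactly a free resolution.

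First I would write everything in the $\alpha$ variables and identify, at each resolution $v$ (a product of identity and $B_i$ pieces), the Rouquier term $B_v = B_{i_1}\otimes_{S}\cdots\otimes_S B_{i_k}$, reduced, as an $R$-module. The Rasmussen column at $v$ is a Koszul complex on a list of $n-1$ elements $f_1,\dots,f_{n-1}$ of the polynomial ring $P_v$ in all edge variables of the resolved diagram. These are the relations "sum in = sum out" at each vertex or crossing region, expressed via the $\alpha$'s, with the quadratic relation at a singular crossing. $B_v$ is the quotient of $P_v$ by the ideal generated by exactly these relations. So the claim reduces to showing that $f_1,\dots,f_{n-1}$ form a regular sequence in $P_v$. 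Second, I would check that the horizontal Rasmussen maps (the $\chi_0,\chi_1$ maps, zip/unzip) induce on $H_0$ of the Koszul complexes the Rouquier maps (multiplication by $\alpha$-type elements, and the unit/counit of $B_i$). This is a local computation at a single crossing that I expect to be routine in the $\alpha$ variables.

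The main obstacle is the regularity. Here the hypothesis that the closure is a knot must enter. In the unreduced setting with $n$ strand variables, the analogous sequence has exactly one relation per strand component that becomes redundant, since summing relations around a closed component gives zero. That redundancy produces extra homology (the exterior factor in the middle/unreduced theory). In the reduced $\alpha$ setting with a knot, there is exactly one component, and reducing removes precisely that one redundancy. I would therefore prove regularity by a dimension count: $P_v/(f)$ should have Krull dimension $\dim P_v - (n-1)$. Since $P_v$ is a polynomial ring (Cohen–Macaulay), it suffices to show that the quotient has the expected dimension, i.e.\ that the affine variety cut out by the $f$'s has codimension $n-1$. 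I would compute this by degenerating the quadratic relations to their leading terms, a Gröbner/initial ideal argument, and reducing to a linear relation set. For a linear set, the rank equals $n-1$ exactly when the permutation of $\beta$ (or of the resolution's underlying connectivity) forms a single cycle, which is the knot condition. If the dimension argument at singular crossings proves delicate, I would fall back on induction on the number of singular points, using short exact sequences $0\to P\xrightarrow{f}P\to P/f\to0$ and the known freeness of $B_i$ over $S$.
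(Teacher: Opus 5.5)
Your proposal has a genuine gap: it never proves that the terms of $C_{\Ras}(\beta)$ are free over $\F[\alpha_1,\dots,\alpha_{n-1},\alpha_1',\dots,\alpha_{n-1}']$, and that is the step where the knot hypothesis is actually used.

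\textbf{The missing freeness step.} You treat it as automatic ("since $C_{\Ras}(\beta)$ is a bounded complex of free $R$-modules"). The terms are free over $Q_\beta=\F[\alpha_1,\dots,\alpha_{n-1+c}]$. But $Q_\beta$ is free over $\F[\alpha,\alpha']$ exactly when the $2(n-1)$ linear forms $\alpha_j,\alpha_j'$ are linearly independent in $Q_\beta$. This fails in general: for $\sigma_1\in\Br_3$ one has $\alpha_2'-\alpha_2-\frac12(\alpha_1-\alpha_1')=0$ in $Q_{\sigma_1}$. The paper gets independence from its basis lemma for $\{\alpha_1,\dots,\alpha_{n-1},\alpha_n^\vee,\dots,\alpha_{n-1+c}^\vee\}$, proved with the flow estimates. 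For a knot every $\sigma_j$ occurs, so each top region is a closed region and each $\alpha_j'$ equals some $\alpha_r^\vee$ with $r\ge n$. Hence $Q_\beta$ is a polynomial ring over $\F[\alpha,\alpha']$. Nothing in your proposal produces this linear independence. Be aware that it is sensitive to the characteristic. If $\mathrm{char}\,\F=3$ and $n=3$, then $\alpha_1'-\alpha_2'=\alpha_1-\alpha_2$ already holds in $Q_{\sigma_1^{\pm1}}$ and $Q_{\sigma_2^{\pm1}}$, hence in every $Q_\beta$, including the unknot $\sigma_1\sigma_2$. So the paper's flow argument, with real coefficients bounded by $1$, is really a characteristic-zero argument.

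\textbf{The hypothesis is not needed where you put it.} The Koszul direction at a cube vertex $v$ is not a Koszul complex on $n-1$ elements of an edge-variable ring. It is the Koszul complex on $c$ elements $f_1,\dots,f_c$ of the single ring $Q_\beta$, one per crossing: $\alpha_{i_k}-\alpha_{i_k}'$ or $\alpha_{i_k}^2-\alpha_{i_k}'^2$ in that crossing's local variables. By right exactness its $H_0$ is $B_v$. This sequence is regular for every braid. Indeed, $B_v$ is finite over $\F[\alpha_1,\dots,\alpha_{n-1}]$, so $\dim Q_\beta/(f_1,\dots,f_c)\le n-1=\dim Q_\beta-c$. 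A homogeneous sequence cutting out the expected codimension in a polynomial ring is regular. The "one redundant relation per component" from the unreduced theory appears only after closing the braid (Hochschild homology), not for the open braid. Consistently, the paper's derived-category lemma holds for arbitrary $\beta$. Your planned rank criterion tied to the permutation being a single cycle therefore targets the wrong statement.

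\textbf{Comparison with the paper's route.} With the Koszul direction corrected, your vertex-by-vertex comparison plus the filtration argument is a valid alternative route to the quasi-isomorphism $C_{\Ras}(\beta)\to C_\Rou(\beta)$. The paper instead works crossing by crossing. Each square resolves $C_\Rou(\sigma_i^{\pm1})$, and freeness of $C_{\Ras}(\sigma_i^{\pm1})$ over $\F[\alpha]$ and over $\F[\alpha']$ makes ordinary and derived tensor products agree. Either way, this only gives the quasi-isomorphism; the freeness still has to be proved separately.
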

For example, Theorem \ref{thm:equivalence Ras and Kho} can be used to provide upper bounds on the projective dimension of Rouquier complexes or to show that Rasmussen's chain complex can be used to compute the Hochschild homology of a Rouquier complex.

\subsection*{Acknowledgment}
I would like to thank Sucharit Sarkar without whom I would have given up on understanding the intricacies of Khovanov-Rozansky homology long ago.

\section{Braid group, Hecke algebra, Markov trace}
Throughout the paper, let $\F$ denote an arbitrary field with $\mathrm{char}(\F) \neq 2$, except in Section \ref{sec:higher differentials}, where we require that $\mathrm{char}(\F) =0$.

\subsection{Braid group}
Let $n \in \N$ be a natural number. The braid group on $n$ strands is given by the presentation
$$\Br_n  = \left\langle \sigma_1, \dots, \sigma_{n-1} \middle|  \ 
\begin{array}{l}
\sigma_i \sigma_j = \sigma_j\sigma_i \text{ for all $i, j \in \{1, \dots, n-1\}$ with $|i-j|>1$} \\
\sigma_i\sigma_{i+1}\sigma_i = \sigma_{i+1}\sigma_i\sigma_{i+1} \text{ for all $i\in \{1, \dots, n-2\}$}
\end{array}
\right\rangle.$$
In presentations of related algebraic objects we encounter in this paper, we will generally omit explicit references to the indexing sets for relations, since they can be inferred from the context. For the rest of this paper, $\beta \in \Br_n$ is a braid on $n$ strands and $c \in \N$ is used to denote the number of crossings in $\beta$. 

\subsection{Hecke Algebra}
A braid analogue of the HOMFLY skein relation is the equation $\sigma_i -\sigma_i^{-1} = (q-q^{-1})$, in which $q$ denotes a formal variable. Due to the presence of $q$, such a requirement cannot be imposed in the braid group $\Br_n$ itself. Instead, let $T = \Z[q, q^{-1}]$ be the ring of Laurent polynomials in $q$ and let $T[\Br_n]$ be the associated group ring. The Hecke algebra is the quotient of $T[\Br_n]$ by the skein relation $\sigma_i - \sigma_i^{-1} = (q-q^{-1})$ or equivalently by $\sigma_i^2 = (q-q^{-1})\sigma_i + 1$. We can restate this algebraically as follows.
\begin{definition}
The \emph{Hecke algebra} is a $T$-algebra with the presentation
$$\H_n  = \left\langle \sigma_1, \dots, \sigma_{n-1} \middle|  \ 
\begin{array}{l}
\sigma_i \sigma_j = \sigma_j\sigma_i \text{ for all $i, j$ with $|i-j|>1$} \\
\sigma_i\sigma_{i+1}\sigma_i = \sigma_{i+1}\sigma_i\sigma_{i+1}\\
\sigma_i^2 = (q-q^{-1})\sigma_i + 1
\end{array}
\right\rangle.$$
\end{definition}
For $i \in \{1, \dots, n-1\}$ define $B_i = q^{-1} - \sigma_i$. It is easy to check that this gives another presentation of $\H_n$ in terms of the new generators as
$$\H_n  = \left\langle B_1, \dots, B_{n-1} \middle|  \ 
\begin{array}{l}
B_i B_j = B_j B_i \text{ for all $i, j$ with $|i-j|>1$} \\
B_i B_{i+1} B_i - B_i = B_{i+1} B_i B_{i+1} - B_{i+1} \\
B_i^2 = (q+q^{-1})B_i
\end{array}
\right\rangle.$$

\subsection{Markov trace}
By Markov's theorem, any two braids with isotopic closures are related by a sequence of conjugations and (de)stabilizations. Roughly speaking, a Markov trace is a family of maps $\mathrm{Tr}_n: \mathrm{H}_n \to \Z[a^{\pm 1}, q^{\pm 1}, \frac{1}{q-q^{-1}}]$ that behave well with respect with the two types of moves. Up to normalization, there exists a unique Markov trace on the Hecke algebras $\mathrm{H}_n$.
\begin{theorem}
\cite[attributed to Ocneanu]{Jones1987Hecke}
For each $n \in \N$, there exists a unique $T$-module homomorphism $\mathrm{Tr}_n: \mathrm{H}_n \to \Z[a^{\pm 1}, q^{\pm 1}, \frac{1}{q-q^{-1}}]$ such that
\begin{enumerate}
    \item $\mathrm{Tr}_n (BB') = \mathrm{Tr}_n (B'B)$ for all $B, B' \in \mathrm{H}_n$,
    \item $\mathrm{Tr}_{n+1} (i(B)) = \{0\} \mathrm{Tr}_{n}(B)$ and $\mathrm{Tr}_{n+1} (i(B)B_{n}) = \{1\} \mathrm{Tr}_{n}(B)$ for all $B \in \H_n$, and
    \item $\mathrm{Tr}_{1} (1) = 1$,
\end{enumerate}
where $i : H_n \to H_{n+1}$ is the inclusion map that sends $B_j \mapsto B_j$ for all $j \in \{1, \dots, n-1\}$ and $\{n\} = \frac{aq^{-n}-a^{-1}q^n}{q-q^{-1}}$.
\end{theorem}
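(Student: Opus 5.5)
We prove existence and uniqueness of the Ocneanu trace by induction on $n$, using the standard structure of the Hecke algebra as an iterated extension. The key structural input is the decomposition
$$\H_{n+1} = i(\H_n) \;\oplus\; i(\H_n)\, B_n\, i(\H_n)$$
as $T$-modules. More precisely, $\H_{n+1} = i(\H_n) + i(\H_n) B_n i(\H_n)$, and every element of $i(\H_n) B_n i(\H_n)$ is a sum of terms $x B_n y$. This follows from the standard basis $\{\sigma_w : w \in S_{n+1}\}$ (or equivalently the $B$-generators). Each $w \in S_{n+1}$ either fixes $n+1$ or factors uniquely as $w = u\, s_n s_{n-1}\cdots s_k$ with $u \in S_n$, giving the coset decomposition. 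I would take freeness of $\H_n$ over $T$ with basis indexed by $S_n$ (Matsumoto/Iwahori) as known.

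\textbf{Uniqueness.} Suppose $\mathrm{Tr}_{n+1}$ satisfies (1)--(3). By trace cyclicity (1), $\mathrm{Tr}_{n+1}(x B_n y) = \mathrm{Tr}_{n+1}(yx B_n)$ for $x,y \in i(\H_n)$. By (2), this equals $\{1\}\mathrm{Tr}_n(yx)$, and $\mathrm{Tr}_{n+1}(i(B)) = \{0\}\mathrm{Tr}_n(B)$. So $\mathrm{Tr}_{n+1}$ is determined by $\mathrm{Tr}_n$. Induction from $\mathrm{Tr}_1(1)=1$ on $\H_1 = T$ gives uniqueness.

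\textbf{Existence.} Define $\mathrm{Tr}_{n+1}$ on the basis. For $w$ fixing $n+1$, set $\mathrm{Tr}_{n+1}(\sigma_w) = \{0\}\mathrm{Tr}_n(\sigma_w)$. Otherwise $\sigma_w = \sigma_u \sigma_n \sigma_v$ uniquely with $u, v \in S_n$ minimal coset data. Since $\sigma_n = q^{-1} - B_n$, set
$$\mathrm{Tr}_{n+1}(\sigma_u\sigma_n\sigma_v) = q^{-1}\{0\}\mathrm{Tr}_n(\sigma_u\sigma_v) - \{1\}\mathrm{Tr}_n(\sigma_v\sigma_u).$$
Equivalently, one can build a conditional expectation $E_n: \H_{n+1} \to \H_n$ that is an $(\H_n,\H_n)$-bimodule map with $E_n(i(x)) = \{0\}x$ and $E_n(x B_n y) = \{1\} x y$. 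This is well defined on the free basis, and we set $\mathrm{Tr}_{n+1} = \mathrm{Tr}_n \circ E_n$. Property (2) is then immediate, and (3) holds by definition.

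\textbf{The main obstacle: cyclicity (1) for $\mathrm{Tr}_{n+1}$.} Since $\H_{n+1}$ is generated by $i(\H_n)$ and $B_n$, it suffices to check $\mathrm{Tr}_{n+1}(xz) = \mathrm{Tr}_{n+1}(zx)$ for $z \in i(\H_n)$ and for $z = B_n$.

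For $z \in i(\H_n)$, the bimodule property of $E_n$ plus cyclicity of $\mathrm{Tr}_n$ gives the result.

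For $z = B_n$, write $x = a + \sum b B_n c$ with $a, b, c \in i(\H_n)$. The terms $aB_n$ versus $B_n a$ both give $\{1\}\mathrm{Tr}_n(a)$. The hard terms are $bB_ncB_n$ versus $B_nbB_nc$. These require the identity
$$B_n c B_n = E'_{n-1}(c)\, B_n + (\text{terms in } \H_n B_{n-1} \cdots)$$
for $c \in \H_n$, which reduces via the $\H_{n-1}$ decomposition of $\H_n$ to two cases. For $c \in \H_{n-1}$, $B_n$ commutes with $c$ and $B_n^2 = (q+q^{-1})B_n$. For $c = d B_{n-1} e$, the braid relation $B_nB_{n-1}B_n - B_n = B_{n-1}B_nB_{n-1} - B_{n-1}$ applies.

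The check then reduces to a consistency identity among $\{0\}$, $\{1\}$, $\{2\}$. Namely, $E_n(B_n)$ must be compatible with $B_n^2 = (q+q^{-1})B_n$, i.e. $\{1\}\{1\}$-type terms must match $(q+q^{-1})\{1\}$-type terms after applying $E_{n-1}$. This boils down to verifying $\{0\}(q+q^{-1}) - \ldots$ relations such as $(q+q^{-1})\{1\} = \{0\}\{1\} + \ldots$ with the explicit formula $\{k\} = \frac{aq^{-k}-a^{-1}q^k}{q-q^{-1}}$, which is a direct computation. I expect this double-coset bookkeeping to be the hardest step, and would organize it using the induction hypothesis that $E_{n-1}$ and $\mathrm{Tr}_n$ already satisfy everything.
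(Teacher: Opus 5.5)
The paper does not prove this theorem; it quotes it from Jones (after Ocneanu). Your outline is the standard argument from that source:
\begin{itemize}
\item uniqueness from $\mathrm{H}_{n+1}=\mathrm{H}_n+\mathrm{H}_nB_n\mathrm{H}_n$ together with cyclicity,
\item existence via $\mathrm{Tr}_{n+1}=\mathrm{Tr}_n\circ E_n$,
\item cyclicity tested against the generators $i(\mathrm{H}_n)$ and $B_n$.
\end{itemize}
Uniqueness, property (2), and the cyclicity cases $z\in i(\mathrm{H}_n)$ and $z=B_n$ against $x\in i(\mathrm{H}_n)$ are fine.

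One point needs a sentence more than you give it. You need $xB_ny\mapsto xy$ to be well defined; equivalently, your basis-level formula must give $\mathrm{Tr}_{n+1}(xB_ny)=\{1\}\mathrm{Tr}_n(xy)$ for all $x,y\in\mathrm{H}_n$, not only for coset representatives. This uses $\mathrm{H}_n\sigma_n\mathrm{H}_n\cong\mathrm{H}_n\otimes_{\mathrm{H}_{n-1}}\mathrm{H}_n$, which your coset factorization provides. Since $xB_ny=q^{-1}xy-x\sigma_ny$, the submodule $\mathrm{H}_nB_n\mathrm{H}_n$ is then the graph of $x\sigma_ny\mapsto -q^{-1}xy$. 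It is therefore another complement to $\mathrm{H}_n$, on which $xB_ny\mapsto xy$ is well defined.

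\textbf{The gap.} The one step with real content is
\[
\mathrm{Tr}_{n+1}(bB_ncB_n)=\mathrm{Tr}_{n+1}(B_nbB_nc)\qquad\text{for } b,c\in\mathrm{H}_n .
\]
You leave it undone, and the mechanism you predict is wrong. It does not reduce to a consistency identity among $\{0\},\{1\},\{2\}$. The only relation of that kind, $(q+q^{-1})\{1\}=\{0\}+\{2\}$, plays no role. In fact the construction works verbatim with $\{0\},\{1\}$ replaced by arbitrary constants, so a verification that hinged on their explicit values would signal an error. Nor is there a compatibility between $E_n(B_n)$ and $B_n^2=(q+q^{-1})B_n$ to check, since $E_n$ is only a bimodule map, not multiplicative.

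\textbf{How to close it.} By the settled case $z\in i(\mathrm{H}_n)$, the left side equals $\mathrm{Tr}_{n+1}(B_ncB_nb)$. So it suffices that $F(b,c):=\mathrm{Tr}_{n+1}(B_nbB_nc)$ be symmetric. Decompose $b,c$ over $\mathrm{H}_{n-1}$ and use:
\begin{itemize}
\item $B_n$ commutes with $\mathrm{H}_{n-1}$, and $B_n^2=(q+q^{-1})B_n$;
\item $B_nB_{n-1}B_n=B_{n-1}B_nB_{n-1}-B_{n-1}+B_n$;
\item the two-sided rule $\mathrm{Tr}_{m+1}(xB_my)=\{1\}\mathrm{Tr}_m(xy)$ for $x,y\in\mathrm{H}_m$, with $m=n,n-1$;
\item cyclicity of $\mathrm{Tr}_n$ and $\mathrm{Tr}_{n-1}$.
\end{itemize}
Take $b_i,c_i\in\mathrm{H}_{n-1}$. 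There are three cases.
\begin{itemize}
\item If $b,c\in\mathrm{H}_{n-1}$, then $F(b,c)=(q+q^{-1})\{1\}\mathrm{Tr}_n(bc)$.
\item If $b=b_1B_{n-1}b_2$ and $c\in\mathrm{H}_{n-1}$, then $F(b,c)=(q+q^{-1})\{1\}^2\mathrm{Tr}_{n-1}(b_1b_2c)$. Here the $-\{0\}\{1\}$ coming from $-B_{n-1}$ cancels the $\{1\}\{0\}$ coming from $+B_n$. Also $F(c,b)=(q+q^{-1})\{1\}^2\mathrm{Tr}_{n-1}(cb_1b_2)$.
\item If $b=b_1B_{n-1}b_2$ and $c=c_1B_{n-1}c_2$, set $X=b_2c_1$ and $Y=c_2b_1$. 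Then
\[
F(b,c)=\bigl((q+q^{-1})\{1\}-\{0\}\bigr)\,\mathrm{Tr}_n(B_{n-1}XB_{n-1}Y)+\{1\}^2\,\mathrm{Tr}_{n-1}(XY),
\]
and swapping $b$ and $c$ swaps $X$ and $Y$.
\end{itemize}
In each case symmetry follows from cyclicity at levels $n$ and $n-1$ alone. The constants enter identically on both sides.
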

The middle property might appear mysterious at first glance. However, recalling that $\sigma_n = q^{-1}-B_n$, one can show that the two equations reduce to $\mathrm{Tr}_{n+1}(i(B)\sigma_n) = a^{-1} \mathrm{Tr}_n(B)$. In other words, each Markov stabilization multiplies the trace by $a^{-1}$. Renormalizing back we obtain the definition of the HOMFLY polynomial in the language of braids.
\begin{theorem}
\cite{Jones1987Hecke}
Let $\beta \in \Br_n$ be a braid. Then the HOMFLY polynomial of its closure $\overline{\beta}$ is $P_{\, \overline{\beta}}(a, q) = a^{w} \mathrm{Tr}_n(\pi(\beta))$ where $w$ denotes the writhe of $\overline{\beta}$ and $\pi: T[\Br_n] \to \H_n$ is the canonical projection $\sigma_i \mapsto \sigma_i$.
\end{theorem}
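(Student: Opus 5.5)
The statement to prove is the last theorem: for a braid $\beta\in\Br_n$, the HOMFLY polynomial of $\overline\beta$ equals $a^{w}\mathrm{Tr}_n(\pi(\beta))$. Since the HOMFLY polynomial is characterized as the unique link invariant satisfying a normalization on the unknot and the skein relation, the plan is to show three things about $P'(\beta):=a^{w(\beta)}\mathrm{Tr}_n(\pi(\beta))$. First, it depends only on the closure $\overline\beta$. Second, it equals $1$ on the unknot. Third, it satisfies the HOMFLY skein relation (in the normalization matching $\sigma_i-\sigma_i^{-1}=(q-q^{-1})$).

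\emph{Invariance.} By Markov's theorem it suffices to check invariance under conjugation and under (de)stabilization.

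For conjugation, $\beta\mapsto\gamma\beta\gamma^{-1}$ leaves the writhe unchanged. Property (1) of the Markov trace gives $\mathrm{Tr}_n(\pi(\gamma)\pi(\beta)\pi(\gamma)^{-1})=\mathrm{Tr}_n(\pi(\beta))$.

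For a positive stabilization $\beta\mapsto i(\beta)\sigma_n\in\Br_{n+1}$:
\begin{itemize}
\item Substituting $\sigma_n=q^{-1}-B_n$ into property (2) gives
\[
\mathrm{Tr}_{n+1}(i(B)\sigma_n)=\bigl(q^{-1}\{0\}-\{1\}\bigr)\mathrm{Tr}_n(B).
\]
\item A direct computation with $\{n\}=\frac{aq^{-n}-a^{-1}q^{n}}{q-q^{-1}}$ shows $q^{-1}\{0\}-\{1\}=\frac{q^{-1}(a-a^{-1})-(aq^{-1}-a^{-1}q)}{q-q^{-1}}=a^{-1}$.
\item The writhe increases by $1$, so the factor $a^{w}$ gains an $a$, which cancels the $a^{-1}$.
\end{itemize}

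For a negative stabilization $i(\beta)\sigma_n^{-1}$:
\begin{itemize}
\item Use $\sigma_n^{-1}=\sigma_n-(q-q^{-1})$, then linearity, to get
\[
\mathrm{Tr}_{n+1}(i(B)\sigma_n^{-1})=\bigl(a^{-1}-(q-q^{-1})\{0\}\bigr)\mathrm{Tr}_n(B)=\bigl(a^{-1}-(a-a^{-1})\bigr)\mathrm{Tr}_n(B).
\]
\item This should equal $a\,\mathrm{Tr}_n(B)$ so that it cancels the writhe change of $-1$. Whether it does depends on the sign conventions in $\{n\}$ and in the skein relation.
\end{itemize}
I expect this step to be the main obstacle. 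I would first recheck the arithmetic, since the correct identity is $a^{-1}-(q-q^{-1})\{0\}$ with $\{0\}=\frac{a-a^{-1}}{q-q^{-1}}$, which gives $2a^{-1}-a$. If this really fails to equal $a$, the resolution is that the paper's normalization of $\{n\}$ (possibly $a\leftrightarrow a^{-1}$, or the sign in $B_i=q^{-1}-\sigma_i$) must be fixed consistently. I would redo the computation with the convention under which both stabilizations work; the uniqueness statement of Ocneanu's theorem then pins that convention down.

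\emph{Normalization and skein relation.} The unknot is the closure of $1\in\Br_1$, so property (3) gives $P'=1$.

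For the skein relation, take diagrams $L_+,L_-,L_0$ given as closures of $\gamma\sigma_i\delta$, $\gamma\sigma_i^{-1}\delta$ and $\gamma\delta$. Their writhes are $w+1$, $w-1$ and $w$. The Hecke relation $\sigma_i-\sigma_i^{-1}=q-q^{-1}$ together with linearity of $\mathrm{Tr}_n$ gives
\[
a^{-1}P'(L_+)-aP'(L_-)=(q-q^{-1})P'(L_0).
\]
This is the HOMFLY skein relation in the normalization used in the paper.

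Since every link is a braid closure (Alexander's theorem) and the skein relation together with normalization determines the invariant uniquely, this gives $P'=P$.
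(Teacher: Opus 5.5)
Your route is the one the paper sketches. The paper cites Jones and records only that $\sigma_n=q^{-1}-B_n$ turns property (2) into $\mathrm{Tr}_{n+1}(i(B)\sigma_n)=a^{-1}\mathrm{Tr}_n(B)$. But as written your proposal has a genuine gap: invariance under negative stabilization is left open, and Markov's theorem needs both stabilizations.

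The obstruction you hit is real, but it does not come from $\{n\}$. It comes from a sign inconsistency between the paper's two presentations of $\H_n$. Substituting $\sigma_i=q^{-1}-B_i$, the relation $B_i^2=(q+q^{-1})B_i$ is equivalent to $\sigma_i^2=(q^{-1}-q)\sigma_i+1$, not to $\sigma_i^2=(q-q^{-1})\sigma_i+1$. The $B$-presentation is the one to trust, for two reasons. Property (2) is phrased through $B_n$. Also, the Euler characteristics of $C_\Rou(\sigma_i)$ and $C_\Rou(\sigma_i^{-1})$ are $q^{-1}-B_i$ and $q-B_i$. In the $B$-presentation, $(q^{-1}-B_n)(q-B_n)=1-(q+q^{-1})B_n+B_n^2=1$, so $\sigma_n^{-1}=q-B_n$, and
\[
\mathrm{Tr}_{n+1}\bigl(i(B)\sigma_n^{-1}\bigr)=\bigl(q\{0\}-\{1\}\bigr)\mathrm{Tr}_n(B)=\frac{qa-qa^{-1}-aq^{-1}+a^{-1}q}{q-q^{-1}}\,\mathrm{Tr}_n(B)=a\,\mathrm{Tr}_n(B).
\]
This is exactly the paper's own example computing $\mathrm{Tr}_2(q-B_1)=a$, and the writhe drop of $1$ cancels the factor $a$.

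None of the repairs you list would do the job:
\begin{itemize}
\item Swapping $a\leftrightarrow a^{-1}$ in $\{n\}$ turns the two stabilization factors into $a$ and $2a-a^{-1}$, which still do not multiply to $1$.
\item Redefining $B_i=q-\sigma_i$ produces the invariant $a^{-w}\mathrm{Tr}_n$ rather than the stated $a^{w}\mathrm{Tr}_n$.
\item The uniqueness in Ocneanu's theorem cannot decide anything, since the defect lies in the relation used to rewrite $\sigma_n^{-1}$, not in the trace.
\end{itemize}

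The same sign error sits in your skein step. With $\sigma_i-\sigma_i^{-1}=q^{-1}-q$, your computation gives $a^{-1}P'(L_+)-aP'(L_-)=(q^{-1}-q)P'(L_0)$. The version with $q-q^{-1}$ is incompatible with the trace. Applied to the closures of $\sigma_1,\sigma_1^{-1},1\in\Br_2$, it would give the two-component unlink the value $-\{0\}$, whereas properties (2) and (3) give $\mathrm{Tr}_2(1)=\{0\}$. The two normalizations differ by $(-1)^{\mu-1}$ on a $\mu$-component link. So the discrepancy is invisible for knots, such as the trefoil checked in the paper, but not for the links the statement covers. Once $\sigma_n^{-1}=q-B_n$ is in place, the rest of your argument goes through: conjugation invariance, the normalization via $1\in\Br_1$, and the appeal to uniqueness of the skein-determined invariant.
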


\begin{example}\label{example:Trn}
Let $n = 2$ and consider $B_1 \in H_2$. Then
$$\mathrm{Tr}_2(B_1) = \{1\} = \frac{aq^{-1}-a^{-1}q}{q-q^{-1}}$$
using the properties $(2)$ and $(3)$ and 
\begin{align*}
\mathrm{Tr}_2(q^{-1}-B_1) &= q^{-1}\mathrm{Tr_2}(1) - \mathrm{Tr}_2(B_1) = q^{-1} \{0\} - \{1\} \\
&= q^{-1} \ \frac{a-a^{-1}}{q-q^{-1}} - \frac{aq^{-1}-a^{-1}q}{q-q^{-1}}\\
&= \frac{1}{q-q^{-1}} \left( q^{-1}a-q^{-1}a^{-1}-aq^{-1}+a^{-1}q \right) = a^{-1}
\end{align*}
using $T$-linearity and properties $(1)$, $(2)$, and $(3)$. Since $\overline{\sigma_1} = U$ is the unknot, it follows that $P_{U}(a, q) = a^1 a^{-1} = 1$ as required. Alternatively, one can check that $\mathrm{Tr}_2(q-B_1) = a$, which gives another computation of the HOMFLY polynomial for the unknot.
\end{example}
The next few sections are dedicated to categorifying $\H_n$, $\Br_n$, $\mathrm{Tr}_n$, and $P_K$ into Khovanov-Rozansky homology.

\section{Categorification of \texorpdfstring{$\mathrm{H}_n$}{Hn}: Soergel bimodules}
The Hecke algebra $\mathrm{H}_n$ was categorified by Soergel in \cite{soergel1990kategorie}. This subsection presents a variation of his construction.
\subsection{Representation of \texorpdfstring{$S_n$}{Sn}}
Let $\check{E}$ be an $n$-dimensional $\F$-vector space and let $\{e_1, \dots, e_n\}$ be a basis of $\check{E}$. There is a natural action of the symmetric group $S_n$ on $\check{E}$ via permuting the coordinates, \emph{i.e.} the permutation $\sigma \in S_n$ acts on the basis elements via $\sigma(e_i) = e_{\sigma(i)}$ for all $i$ and the action is extended linearly to all of $\check{E}$. Note that $e_1+\dots+e_n \in \check{E}$ spans an $S_n$-invariant subspace, so $\check{E}$ is not irreducible as a representation of $S_n$. In fact, $\check{E}$ decomposes into
$$\check{E} \cong \F\langle \alpha_1, \dots, \alpha_{n-1} \rangle \oplus \F\langle e_1+\dots+e_{n} \rangle$$
where $\alpha_i = e_{i+1}-e_i$ for all $i$. Note that both summands are irreducible.

\vspace{1em}

Let $E = \F\langle \alpha_1, \dots, \alpha_{n-1} \rangle$ be the standard representation. The action of $S_n$ on $E$ extends to the action of $S_n$ on the symmetric algebra $S(E) \cong \F[\alpha_1, \dots, \alpha_{n-1}]$. Since $S(E)$ is isomorphic to a polynomial ring, it is naturally $\N_0$-graded by degree, \emph{i.e.} there is a decomposition $S(E) \cong \bigoplus_{i \in \N_0} S(E)_{i}$ where $S(E)_{i}$ is the $\F$-vector space generated by homogeneous polynomials of degree $i$. For historic reasons, we almost exclusively deal with the double of this grading.
\begin{definition}
Define $\gr_q: \bigsqcup_{i \in \N_0} S(E)_i \to \N_0$ via $\gr_q(x) = 2i$ for all $x \in S(E)_i$.
\end{definition}
The grading $\gr_q$ is called the \emph{quantum grading}.

\subsection{Soergel bimodules}
Let $R = S(E)$ and $R^{\sigma_i}$ denote the subring of $R$ that is fixed under the action of the elementary transposition $\sigma_i = (i \ i+1)$. Note that $R$ is integral over $R^{\sigma_i}$.
\begin{remark}
For example, we have $\alpha_i \notin R^{\sigma_i}$ since $\sigma_i(\alpha_i) = -\alpha_i$, but $\alpha_i^2 \in R^{\sigma_i}$ since $\sigma_i(\alpha_i^2)=\sigma_i(\alpha_i)\sigma_i(\alpha_i)=(-\alpha_i)(-\alpha_i) = \alpha_i^2$.
\end{remark}
Since $\mathrm{char}(\F) \neq 2$, a good description of $R^{\sigma_i}$ is
$$R^{\sigma_i} = \F\left[\alpha_1, \dots, \alpha_{i-2}, \alpha_{i-1}+\frac{1}{2}\alpha_i, \alpha_i^2, \alpha_{i+1}+\frac{1}{2}\alpha_i, \alpha_{i+1}, \dots, \alpha_{n-1}\right].$$ 
This is because $\sigma_i(\alpha_j)=\alpha_j$ for all $i, j$ with $|i-j|>1$ and $\sigma_i(\alpha_{i-1}) = \alpha_{i-1} + \alpha_i$ and $\sigma_i(\alpha_{i+1}) = \alpha_{i+1}+\alpha_i$ and it follows that the variables of the above polynomial ring are all fixed under the action of $\sigma_i$.

\vspace{1em}

Consider $R$ as a module over $R^{\sigma_i}$. It is free of rank $2$ since $R \cong R^{\sigma_i} \langle 1, \alpha_i \rangle$. Note that this isomorphism is not canonical, \emph{i.e.} there are many choices of bases of $R$ as a free-module over $R^{\sigma_i}$. However, all homogeneous bases consist of elements $(x, y)$ satisfying $\gr_q(x) = 0$ and $\gr_q(y) = 2$. This allows us to write $R \cong (1+q^2) R^{\sigma_i}$ where multiplication by $q$ indicates a grading shift of $\gr_q$ upwards by $1$ for each element of the ring.

\vspace{1em}

The ring $R$ is a module over itself. It can also be treated as an $(R, R)$-bimodule with the same left and right actions, or more generally as an $(R', R'')$-bimodule for any subrings $R', R'' \subset R$. This observation leads to the definition of elementary Soergel bimodules.
\begin{definition}
An \emph{elementary Soergel bimodule} is $\B_i = q^{-1} R \otimes_{R^{\sigma_i}} R$.
\end{definition}
Note that $R$ and $\B_i$ are both $(R, R)$-bimodules. Every $(R, R)$-bimodule can also be treated as a module over $R \otimes_\F R \cong \F[\alpha_1, \dots, \alpha_{n-1}, \alpha_1', \dots, \alpha_{n-1}']$ and this interpretation is frequently easier to work with. We have the isomorphisms 
$$R \cong \frac{\F[\alpha_1, \dots, \alpha_{n-1}, \alpha_1', \dots, \alpha_{n-1}']}{(\alpha_i - \alpha_i' \text{ for all } i)}$$
and 
$$\B_i \cong q^{-1} \frac{\F[\alpha_1, \dots, \alpha_{n-1}, \alpha_1', \dots, \alpha_{n-1}']}{(\alpha_j-\alpha_j' \text{ for $j$ with $|i-j|>1$, } (\alpha_{i\pm 1}+\frac{1}{2}\alpha_i) - (\alpha_{i \pm 1}'+\frac{1}{2}\alpha_i'),  \alpha_i^2-\alpha_i'^2  )}.$$
\begin{definition}
A \emph{Soergel bimodule} is a direct summand of a tensor product of elementary Soergel bimodules.
\end{definition}
The category of Soergel bimodules $\mathrm{SBim}_n$ is a subcategory of $(R, R)\mathrm{-Bim}_{\Z}$. As such, it is additive and monoidal with the monoidal structure given by the tensor product $\otimes_{R \otimes R}$. We frequently abbreviate $MN = M \otimes_{R \otimes R} N$.
\begin{theorem}
\cite{soergel1990kategorie}
In the category of Soergel bimodules $\mathrm{SBim}_n$, we have the following isomorphisms:
\begin{enumerate}
    \item $\B_i\B_j \cong \B_j\B_i$ for all $i, j$ with $|i-j| > 1$,
    \item $\B_i\B_{i+1}\B_i \oplus \B_{i+1} \cong \B_{i+1}\B_i\B_{i+1} \oplus \B_i$, and
    \item $\B_i^2 \cong (q+q^{-1}) \B_i$.
\end{enumerate}
\label{thm:Soergel relations}
\end{theorem}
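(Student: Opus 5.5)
All three isomorphisms reduce to statements about $R$ as a graded module over the rings of invariants of parabolic subgroups of $S_n$. I will identify $\B_i\B_j = q^{-2}R\otimes_{R^{\sigma_i}}R\otimes_{R^{\sigma_j}}R$ and work with explicit bases coming from the freeness $R\cong R^{\sigma_i}\langle 1,\alpha_i\rangle$. Throughout I use that $\mathrm{char}(\F)\neq 2$, so that the Demazure operator $\partial_i(f)=(f-\sigma_i f)/\alpha_i$ is available. Every $f\in R$ then decomposes uniquely as $f=a+b\alpha_i$ with $a,b\in R^{\sigma_i}$, namely
$$b=\tfrac12\partial_i f,\qquad a=\tfrac12\bigl(f+\sigma_i f\bigr)-\tfrac12\alpha_i\,\partial_i(f)\cdot 0,$$
where the second formula should be read simply as $a=f-b\alpha_i$, which is $\sigma_i$-invariant.

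\emph{Relation (3).} I will show $\B_i\B_i\cong q^{-2}R\otimes_{R^{\sigma_i}}R\otimes_{R^{\sigma_i}}R$. Since the middle $R$ satisfies $R\cong R^{\sigma_i}\oplus q^{2}R^{\sigma_i}$ as graded $(R^{\sigma_i},R^{\sigma_i})$-bimodules, with the splitting $f\mapsto(a,b)$ above, we get
$$R\otimes_{R^{\sigma_i}}R\otimes_{R^{\sigma_i}}R\cong R\otimes_{R^{\sigma_i}}R\;\oplus\; q^2\,R\otimes_{R^{\sigma_i}}R.$$
The maps are $x\otimes f\otimes y\mapsto (xa\otimes y,\ xb\otimes y)$. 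They are left and right $R$-linear because the splitting is $R^{\sigma_i}$-bilinear. Tracking the shift $q^{-2}=q^{-1}\cdot q^{-1}$ gives $q^{-1}\B_i\oplus q\B_i=(q+q^{-1})\B_i$.

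\emph{Relation (1).} For $|i-j|>1$, I will use $R^{\sigma_i,\sigma_j}$. Here $R$ is free over $R^{\sigma_i,\sigma_j}$ with basis $1,\alpha_i,\alpha_j,\alpha_i\alpha_j$, and $R^{\sigma_j}$ is free over $R^{\sigma_i,\sigma_j}$ with basis $1,\alpha_i$. The key observation is that $\alpha_i\in R^{\sigma_j}$ precisely because $|i-j|>1$, and symmetrically $\alpha_j\in R^{\sigma_i}$. From this, both $\B_i\B_j$ and $\B_j\B_i$ are identified with $q^{-2}R\otimes_{R^{\sigma_i,\sigma_j}}R$. The map $x\otimes y\otimes z\mapsto xyz$-type collapsing gives $R\otimes_{R^{\sigma_i}}R\otimes_{R^{\sigma_j}}R\cong R\otimes_{R^{\sigma_i,\sigma_j}}R$; this holds because $R\cong R^{\sigma_i}\otimes_{R^{\sigma_i,\sigma_j}}R^{\sigma_j}$ via multiplication, which I will check by comparing the free bases just listed.

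\emph{Relation (2).} This is the main obstacle. The plan is to show that both $\B_i\B_{i+1}\B_i$ and $\B_{i+1}\B_i\B_{i+1}$ decompose with a common summand
$$\B_{i,i+1}=q^{-3}R\otimes_{R^{\langle\sigma_i,\sigma_{i+1}\rangle}}R,$$
namely $\B_i\B_{i+1}\B_i\cong \B_{i,i+1}\oplus\B_i$, and symmetrically for the other product. The argument has three steps:

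First, write $\B_i\B_{i+1}\B_i=q^{-3}R\otimes_{R^{\sigma_i}}R\otimes_{R^{\sigma_{i+1}}}R\otimes_{R^{\sigma_i}}R$.

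Second, decompose the middle piece $R\otimes_{R^{\sigma_{i+1}}}R$ as an $(R^{\sigma_i},R^{\sigma_i})$-bimodule. The standard fact needed is that $R^{\sigma_i}\otimes_{R^{S_3}}R^{\sigma_{i+1}}\to R$ is injective with image complementary to a rank-one piece, which should yield
$$R^{\sigma_i}\otimes_{R^{\sigma_{i+1}}}\text{-structure splitting}\quad R\otimes_{R^{\sigma_{i+1}}}R\cong \bigl(R^{\sigma_i}\otimes_{R^{S_3}}R^{\sigma_i}\bigr)\text{-piece}\oplus q^{?}R^{\sigma_i}\text{-piece}.$$
Concretely, I will use the fact that $R$ is free of rank $6$ over $R^{S_3}$ with basis $1,\alpha_i,\alpha_{i+1},\alpha_i^2,\alpha_i\alpha_{i+1},\alpha_i^2\alpha_{i+1}$ (degrees giving $1+2q^2+2q^4+q^6$), and compare graded ranks over $R\otimes R^{S_3}$ on both sides.

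Third, construct explicit bimodule maps. The projection $\B_i\B_{i+1}\B_i\to\B_i$ sends $1\otimes f\otimes g\otimes 1$ to $\partial_{i+1}(\ldots)$-weighted terms. The inclusion of $\B_{i,i+1}$ is $1\otimes1\mapsto 1\otimes1\otimes1\otimes1$. I will verify that these give a splitting by checking on free generators over $R\otimes R^{S_3}$, where the graded ranks $(q+q^{-1})^3=q^3+3q+3q^{-1}+q^{-3}$ versus $[3]!$-type counts $q^{-3}(1+2q^2+2q^4+q^6)$ plus $q+q^{-1}$ confirm the decomposition numerically. The delicate point I expect to struggle with is exhibiting an honest idempotent rather than only a rank equality, because the rank count only shows that the candidate decomposition is numerically consistent. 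The fallback is to use the characteristic $\neq2$ explicit $R^{\sigma_i}$ generators above to write the projection formulas and verify them directly. Relation (2) then follows by adding $\B_{i+1}$, respectively $\B_i$, to both sides.
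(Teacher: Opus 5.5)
Your arguments for (1) and (3) are fine; the paper gives no proof of this theorem and only cites Soergel. The genuine gap is in relation (2). Everything there depends on $\B_i\B_{i+1}\B_i\cong\B_{i,i+1}\oplus\B_i$, and the proposal never proves it.

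The ``standard fact'' you invoke is false as stated. Over $R^{S_3}$, the source of the multiplication map $R^{\sigma_i}\otimes_{R^{S_3}}R^{\sigma_{i+1}}\to R$ is free of rank $9$ and the target is free of rank $6$, so the map cannot be injective. The splitting you say should follow from it is given with no maps and an unspecified shift $q^{?}$. In the third step the projection onto $\B_i$ is never actually defined. The only verification offered is a graded-rank count, which, as you concede, shows that the decomposition is numerically consistent, not that a splitting exists.

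What is missing is a correct version of your second step, with explicit maps. As graded $(R^{\sigma_i},R^{\sigma_i})$-bimodules one needs
$$R\otimes_{R^{\sigma_{i+1}}}R\;\cong\;\bigl(R^{\sigma_i}\otimes_{R^{S_3}}R^{\sigma_i}\bigr)\oplus q^{2}R^{\sigma_i}.$$
The first summand maps in by $f\otimes g\mapsto f\otimes g$. The second maps in by $1\mapsto\alpha_{i+1}\otimes1+1\otimes\alpha_{i+1}$. This element commutes with every $f\in R$, since it is the image of the Rouquier map $qR\to\B_{i+1}$.

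Both sides are free left $R^{\sigma_i}$-modules of graded rank $1+2q^2+q^4$. By graded Nakayama it therefore suffices to check surjectivity modulo $R^{\sigma_i}_{+}$, which is a finite computation in $\gr_q$-degrees $0,2,4$. Applying $R\otimes_{R^{\sigma_i}}(-)\otimes_{R^{\sigma_i}}R$ and shifting by $q^{-3}$ then gives $\B_i\B_{i+1}\B_i\cong\B_{i,i+1}\oplus\B_i$. The same argument with $i$ and $i+1$ swapped gives the other product, and adding $\B_{i+1}$, respectively $\B_i$, yields (2).

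There is also a characteristic issue. Your explicit basis $1,\alpha_i,\alpha_{i+1},\alpha_i^2,\alpha_i\alpha_{i+1},\alpha_i^2\alpha_{i+1}$ of $R$ over $R^{S_3}$ is wrong in characteristic $3$. There $\alpha_{i+1}-\alpha_i$ is fixed by both $\sigma_i$ and $\sigma_{i+1}$, so these elements are not $R^{S_3}$-linearly independent.

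Worse, take $n=3$ in characteristic $3$, which the paper's standing assumption $\mathrm{char}(\F)\neq2$ allows. Then $R^{S_3}$ is generated in polynomial degrees $1$ and $6$. Consequently $\B_1\B_2\B_1\cong\B_{1,2}\oplus\B_1$ already fails on graded ranks, and that case cannot be handled by this route at all.
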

Since $\mathrm{SBim}_n$ is a graded additive monoidal category, its Grothendieck group $K_0(\mathrm{SBim}_n)$ can be defined as the free $T$-module generated by the objects of $\mathrm{SBim}_n$ modulo relations of the form $[\B_i] + [\B_j] = [\B_i\oplus \B_j]$ and $q[\B_i] = [q\B_i]$. The Grothendieck group becomes a $T$-algebra if we define multiplication via $[\B_i]\cdot[\B_j] = [\B_i\B_j]$.
\begin{theorem}
\cite{soergel1990kategorie}
There is an isomorphism of $T$-algebras $\Psi: \mathrm{H}_n \cong K_0(\mathrm{SBim}_n)$ given by $B_i \mapsto [\B_i]$. 
\end{theorem}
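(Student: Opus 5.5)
The map $\Psi$ is defined on generators by $B_i \mapsto [\B_i]$. To extend it to a $T$-algebra homomorphism, it suffices to check the defining relations of $\H_n$ in the $B_i$-presentation. These are exactly the images in $K_0(\mathrm{SBim}_n)$ of the isomorphisms of Theorem \ref{thm:Soergel relations}:
\begin{itemize}
\item $[\B_i][\B_j]=[\B_j][\B_i]$ for $|i-j|>1$;
\item $[\B_i\B_{i+1}\B_i]+[\B_{i+1}] = [\B_{i+1}\B_i\B_{i+1}]+[\B_i]$, which rearranges to the braid-type relation;
\item $[\B_i]^2 = (q+q^{-1})[\B_i]$, using $q[M]=[qM]$ and additivity.
\end{itemize}
So $\Psi$ is a well-defined $T$-algebra map. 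The remaining work is to show it is bijective.

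For surjectivity, I use the fact that every Soergel bimodule is a direct summand of some Bott--Samelson bimodule $\B_{i_1}\cdots\B_{i_k}$ with grading shifts. The plan is to classify the indecomposable Soergel bimodules. For each $w\in S_n$ there should be a unique indecomposable $\B_w$, up to shift, that occurs as a summand of $\B_{i_1}\cdots\B_{i_k}$ for a reduced expression of $w$ and does not occur in shorter products. This requires Krull--Schmidt, which holds here because we work with graded bimodules over a positively graded ring whose degree-zero part is $\F$, so graded endomorphism rings of indecomposables are local. By induction on length, $[\B_w]$ equals $[\B_{i_1}\cdots\B_{i_k}]$ minus a $T$-combination of $[\B_v]$ with $v<w$. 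Hence every class in $K_0$ lies in the $T$-span of the products $[\B_{i_1}]\cdots[\B_{i_k}]$, which is the image of $\Psi$.

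For injectivity, I construct a left inverse, a character map $\mathrm{ch}: K_0(\mathrm{SBim}_n)\to \H_n$, following Soergel. The main tool is the standard filtration. Every Soergel bimodule $M$ admits a filtration whose subquotients are shifted standard bimodules $R_w$, where $R_w$ is $R$ with the right action twisted by $w$. The graded multiplicities are well defined, since they can be computed via localization at the generic point, or via $\Gamma_{\le w}M/\Gamma_{<w}M$ with $\Gamma$ taking sections supported on graphs. Set $\mathrm{ch}(M)=\sum_w h_w(q)\,H_w$, where $H_w$ is the standard basis of $\H_n$, suitably normalized with respect to $\sigma_i$.

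Next I verify that $\mathrm{ch}$ is additive and satisfies $\mathrm{ch}(M\B_i)=\mathrm{ch}(M)\,B_i$. The key computation is that $R_w\otimes_R \B_i$ has a two-step filtration with subquotients $R_w$ and $R_{ws_i}$, with shifts determined by whether $ws_i>w$. This uses the short exact sequence
$$0\to R_{s_i}(-1)\to \B_i\to R(1)\to 0$$
and its variant, where the shift conventions involve $\alpha_i$ acting with degree $2$.

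Once this multiplicativity holds, $\mathrm{ch}\circ\Psi$ agrees with the identity on the monomials $B_{i_1}\cdots B_{i_k}$, which span $\H_n$. Therefore $\mathrm{ch}\circ\Psi=\id$, and $\Psi$ is injective.

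The main obstacle is the well-definedness of the standard filtration and its multiplicities on direct summands. A summand of a filtered module need not obviously be filtered. I would first establish this through Soergel's "$\Gamma$" support-filtration argument, which is functorial and so passes to summands. Over $\mathrm{char}(\F)\neq2$ with the reflection-faithful representation $E$, this is the standard Soergel hypothesis, so I would invoke it rather than reprove it.
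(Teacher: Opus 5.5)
The paper gives no argument beyond the citation to Soergel, and your outline follows the structure of Soergel's own proof. The relation check for well-definedness is fine. The injectivity argument is also sound modulo the flag theory you invoke: it uses a character map built from $\Gamma$-filtrations, which are functorial and hence pass to summands, together with $\mathrm{ch}(M\B_i)=\mathrm{ch}(M)B_i$.

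\textbf{The gap is in surjectivity.} Krull--Schmidt only says that decompositions into indecomposables are unique. It does not say that an indecomposable Soergel bimodule is determined, up to shift, by the maximal element of its support, and your induction needs exactly that. To write $[\B_{i_1}\cdots\B_{i_k}]=[\B_w]+\sum_{v<w}p_v[\B_v]$, you must know two things:
\begin{enumerate}
\item every other indecomposable summand of $\B_{i_1}\cdots\B_{i_k}$ is a shift of one of the previously constructed $\B_v$;
\item $\B_w$ does not depend on the chosen reduced word.
\end{enumerate}
A priori such a summand is merely some indecomposable supported on $\{<w\}$. Since $\mathrm{ch}\circ\Psi=\id$, surjectivity of $\Psi$ is equivalent to injectivity of $\mathrm{ch}$ on $K_0$, so it cannot be extracted from the character map alone.

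The missing ingredient is Soergel's Hom formula, or its consequence that for Soergel bimodules $M,N$ supported on $\{\le x\}$, every morphism between their $x$-stalks lifts to a morphism $M\to N$. With it, you lift the inclusion and projection of a rank-one summand of the $x$-stalk of an indecomposable $B$ to maps $\B_x\to B\to\B_x$ whose composite is invertible on the stalk. Locality of $\mathrm{End}(\B_x)$ and indecomposability of $B$ then give $B\cong\B_x$ up to shift. Proving the Hom formula is a separate and substantial part of Soergel's theory, not a consequence of the existence of a standard filtration. So this, rather than the behaviour of flags on summands, is the real obstacle you have misidentified.

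\textbf{Your appeal to Soergel's hypotheses is not accurate for every field the paper allows.} When $n=3$ and $\mathrm{char}\,\F=3$, the vector $\alpha_2-\alpha_1=e_1+e_2+e_3$ lies in $E$ and is fixed by all of $S_3$. So a $3$-cycle fixes a hyperplane of the $2$-dimensional space $E$, and $E$ is not reflection faithful. As far as I recall, Soergel's theorem is also stated over infinite fields. The paper's bare citation has the same blind spot. However, both halves of your argument rest on Soergel's flag theory, so you should either restrict to his hypotheses or treat the excluded cases separately.
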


\section{Categorification of \texorpdfstring{$\Br_n$}{Brn}: Rouquier complexes}
Let $K(\mathrm{SBim}_n)$ denote the homotopy category of the category $\mathrm{SBim}_n$ of Soergel bimodules. In \cite{rouquier2006categorification}, a map $C_\Rou: \{ \sigma_1, \dots, \sigma_{n-1}, \sigma_1^{-1}, \dots, \sigma_{n-1}^{-1} \} \to K(\mathrm{SBim}_n)$ was defined via
$$C_\Rou(\sigma_i) = \B_i \xrightarrow{1} q^{-1}R$$
and
$$C_\Rou(\sigma_i^{-1}) = qR \xrightarrow{\alpha_i+\alpha_i'} \B_i$$
for all $i \in \{1, \dots, n-1\}$. The homological grading on $C_\Rou(\sigma_i)$ and $C_\Rou(\sigma_i^{-1})$ is called the \emph{Rouquier} grading and is denoted by $\gr_R$. It is chosen such that the terms $qR$ and $q^{-1}R$ are in Rouquier grading $0$ and the differential decreases $\gr_R$ by $1$.
\begin{definition}
Chain homotopy types $C_\Rou(\sigma_i)$ and $C_\Rou(\sigma_i^{-1})$ are \emph{elementary Rouquier complexes}. 
\end{definition}
Inheriting these properties from $\mathrm{SBim}_n$, the category $K(\mathrm{SBim}_n)$ is a monoidal category whose monoidal structure is given by the tensor product of chain complexes $\otimes_{R \otimes R}$. We likewise frequently omit the subscript here.
\begin{definition}
A \emph{Rouquier complex} is a tensor product of elementary Rouquier complexes. Let $\mathrm{Rou}_n \subset K(\mathrm{SBim}_n)$ denote the full subcategory of $K(\mathrm{SBim}_n)$ spanned by the Rouquier complexes. 
\end{definition}
\begin{theorem}
\label{thm:Rouquier relations}
\cite{rouquier2006categorification}
We have the following isomorphisms in $\mathrm{Rou}_n \subset K(\mathrm{SBim}_n)$.
\begin{enumerate}
    \item $C_\Rou(\sigma_i) \otimes C_\Rou(\sigma_i^{-1}) \cong R$,
    \item $C_\Rou(\sigma_i \sigma_j) \cong C_\Rou(\sigma_j \sigma_i)$ for all $i, j$ with $|i-j| > 1$, and
    \item $C_\Rou(\sigma_i\sigma_{i+1}\sigma_i) \cong C_\Rou(\sigma_{i+1}\sigma_{i}\sigma_{i+1})$.
\end{enumerate}
\end{theorem}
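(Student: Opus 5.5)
The three isomorphisms are Rouquier's, but I would verify them directly in the $\alpha$-variables as explicit chain homotopy equivalences over $R\otimes R\cong\F[\alpha_1,\dots,\alpha_{n-1},\alpha_1',\dots,\alpha_{n-1}']$. The tools are the Soergel relations of Theorem \ref{thm:Soergel relations} and Gaussian elimination: an isomorphism component of a differential can be cancelled, with the remaining differential corrected by the zig-zag term. Before starting I would write down explicit bimodule maps. These are the unit and counit $\B_i\to q^{-1}R$ (multiplication) and $qR\to\B_i$, $1\mapsto \alpha_i\otimes 1+1\otimes\alpha_i$. I would also fix an explicit splitting $\B_i\B_i\cong q\B_i\oplus q^{-1}\B_i$ given by the basis $\{1,\alpha_i\}$ of $R$ over $R^{\sigma_i}$. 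Here the hypothesis $\mathrm{char}(\F)\neq 2$ is used to write down $R^{\sigma_i}$ and the projections.

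\textbf{Relation (1).} Expanding the tensor product gives a three-term complex
$$q\B_i \to \B_i\B_i\oplus R \to q^{-1}\B_i.$$
Substituting $\B_i\B_i\cong q\B_i\oplus q^{-1}\B_i$, I would check that the component $q\B_i\to q\B_i$ is an isomorphism, namely multiplication by a unit coming from $\alpha_i+\alpha_i'$ acting on the middle factor. I would also check that the component $q^{-1}\B_i\to q^{-1}\B_i$ is an isomorphism, induced by the multiplication map. Gaussian elimination of both pairs leaves $R$ in degree $0$. This is a direct computation on the generators $1\otimes1\otimes1$ and $1\otimes\alpha_i\otimes1$.

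\textbf{Relation (2).} For $|i-j|>1$ the variables $\alpha_i$ and $\alpha_j$ are, up to the $\frac12$ corrections, independent of the invariant rings involved. So $\B_i\B_j\cong\B_j\B_i$ via the obvious swap, and the unit maps commute with it. The swap is a chain isomorphism of the tensor product complexes, so this case is immediate.

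\textbf{Relation (3).} This is the main obstacle. The complex $C_\Rou(\sigma_i\sigma_{i+1}\sigma_i)$ has eight terms: $\B_i\B_{i+1}\B_i$, then $\B_{i+1}\B_i$, $\B_i\B_i$, $\B_i\B_{i+1}$, then $\B_i$, $\B_{i+1}$, $\B_i$, then $R$, with the appropriate shifts. I would apply the splitting $\B_i\B_i\cong q\B_i\oplus q^{-1}\B_i$ and cancel the two resulting copies of $\B_i$ against the two $\B_i$ terms in the next degree. The result is a complex
$$\B_i\B_{i+1}\B_i\to \B_{i+1}\B_i\oplus\B_i\B_{i+1}\to \B_{i+1}\to R.$$
I would then decompose $\B_i\B_{i+1}\B_i\cong \B_{i,i+1}\oplus\B_i$, where $\B_{i,i+1}=R\otimes_{R^{S_3}}R$ up to shift; this is the content of Soergel relation (2). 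The extra $\B_i$ summand cancels as well, leaving a complex built on $\B_{i,i+1}$, $\B_{i+1}\B_i\oplus\B_i\B_{i+1}$, $\B_{i+1}\oplus\B_i$ and $R$, which is manifestly symmetric under $i\leftrightarrow i+1$. Reducing the other side the same way yields the same normal form, and composing the two reductions gives the isomorphism.

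The delicate points are bookkeeping the corrected differentials after each cancellation and checking that the two normal forms have literally the same maps, not merely the same terms. An alternative I would keep in reserve avoids the complete calculation: show $C_\Rou(\sigma_i\sigma_{i+1}\sigma_i)\otimes C_\Rou(\sigma_{i+1}^{-1}\sigma_i^{-1}\sigma_{i+1}^{-1})\simeq R$ using relation (1) and a smaller computation. Tensoring with $C_\Rou(\sigma_{i+1}\sigma_i\sigma_{i+1})$ and applying (1) again would then give the braid relation.
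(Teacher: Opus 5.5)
The paper gives no proof of this theorem; it is quoted from Rouquier, so your proposal has to stand on its own. Parts (1) and (2) are fine.
\begin{itemize}
\item In (1), the component $q\B_i\to\B_i\B_i$ is the identity onto the summand spanned by $1\otimes\alpha_i\otimes1$. The multiplication map is the identity on the summand spanned by $1\otimes1\otimes1$. Two eliminations therefore leave $R$.
\item In (2), ``the obvious swap'' must be the isomorphism $\B_i\B_j\cong q^{-2}R\otimes_{R^{\langle\sigma_i,\sigma_j\rangle}}R\cong\B_j\B_i$ fixing $1\otimes1\otimes1$. It comes from $R\cong R^{\sigma_i}\otimes_{R^{\langle\sigma_i,\sigma_j\rangle}}R^{\sigma_j}$, with bases $\{1,\alpha_j\}$ and $\{1,\alpha_i\}$, not from permuting tensor factors. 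It also needs a Koszul sign on the top term.
\end{itemize}
Your overall strategy for (3), reducing both sides to a minimal complex, is the standard one. But its execution has a genuine gap.

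\textbf{A cancellation that cannot happen.} The degree-$2$ term $\B_i\otimes q^{-1}R\otimes\B_i=q^{-1}\B_i\B_i$ splits as $\B_i\oplus q^{-2}\B_i$. Both degree-$1$ copies of $\B_i$ are $q^{-2}\B_i$, and a grading-preserving map $\B_i\to q^{-2}\B_i$ is never an isomorphism.
\begin{itemize}
\item Only the summand spanned by $1\otimes1\otimes1$ can cancel downward, and only against \emph{one} of the two copies.
\item The summand spanned by $1\otimes\alpha_i\otimes1$ must cancel \emph{upward}, against the $\B_i$-summand of $\B_i\B_{i+1}\B_i$.
\end{itemize}
Your intermediate complex $\B_i\B_{i+1}\B_i\to\B_{i+1}\B_i\oplus\B_i\B_{i+1}\to\B_{i+1}\to R$ is not homotopy equivalent to $C_\Rou(\sigma_i\sigma_{i+1}\sigma_i)$: its Euler characteristic is off by $(1-q^{-2})[\B_i]$. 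In it, the extra $\B_i$ in degree $3$ would have nothing to cancel against, since $\B_i\B_{i+1}$ and $\B_{i+1}\B_i$ are indecomposable. Your final list of terms is correct, but your route does not reach it.

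The upward cancellation also needs an explicit splitting. Theorem~\ref{thm:Soergel relations}(2) does not supply one, because it only gives an isomorphism after adding summands to both sides. For instance, $1\otimes1\mapsto 1\otimes(\alpha_{i+1}\otimes1+1\otimes\alpha_{i+1})\otimes1$ defines a map $\B_i\to\B_i\B_{i+1}\B_i$. Collapsing $\B_{i+1}$ sends this to $2(1\otimes\alpha_{i+1}\otimes1)$. Since $\alpha_{i+1}+\tfrac12\alpha_i\in R^{\sigma_i}$, its projection to the $1\otimes\alpha_i\otimes1$ summand is $-1$ times the generator.

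\textbf{The step you call bookkeeping.} This is the entire content of the braid relation, and it is missing. The reduced complex is symmetric under $i\leftrightarrow i+1$ only in its terms. The eliminations themselves are asymmetric. For example, cancelling the $1\otimes1\otimes1$ summand against $\B_i\otimes q^{-1}R\otimes q^{-1}R$ creates, via the zig-zag, a new component from $q^{-1}\B_i\B_{i+1}$ to the surviving copy $q^{-1}R\otimes q^{-1}R\otimes\B_i$. So nothing about the differentials is manifest. You need one of the following:
\begin{itemize}
\item compute every component on both sides and write down the isomorphism explicitly; or
\item use a rigidity argument. Each component of the reduced differential lies in a one-dimensional space of degree-$0$ bimodule maps spanned by the canonical map, by Soergel's Hom formula. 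You check that each component is nonzero. Then $d^2=0$ lets you rescale the six summands to match the other side.
\end{itemize}

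\textbf{The fallback.} It does not shorten the work. The word $\sigma_i\sigma_{i+1}\sigma_i\sigma_{i+1}^{-1}\sigma_i^{-1}\sigma_{i+1}^{-1}$ contains no adjacent inverse pair, so (1) cannot be applied. Proving that this $64$-term complex is $\simeq R$ is as hard as the braid relation itself. Concluding from it would also need $C_\Rou(\sigma_k^{-1})\otimes C_\Rou(\sigma_k)\simeq R$, the opposite order to (1).
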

It is implicit in the statement of the theorem that the isomorphisms also preserve $\gr_q$. This follows immediately from the fact that $C_\Rou(\sigma_i^{\pm 1})$ preserve $\gr_q$.

\vspace{1em}

Based on the above relations, Rouquier proved in \cite{rouquier2006categorification} that the category $\mathrm{Rou}_n$ is a categorification of the braid group $\Br_n$.
\begin{theorem}
\cite{rouquier2006categorification}
There is a group isomorphism $\Br_n \cong (\mathrm{Rou}_n, \otimes_{R \otimes R})$.
\end{theorem}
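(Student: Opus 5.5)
The plan is to deduce the group isomorphism $\Br_n \cong (\mathrm{Rou}_n, \otimes_{R\otimes R})$ from Theorem \ref{thm:Rouquier relations}, reading $(\mathrm{Rou}_n,\otimes)$ as the group of isomorphism classes of Rouquier complexes in $K(\mathrm{SBim}_n)$ under tensor product. First I check that this set is a group. Associativity comes from associativity of $\otimes_{R\otimes R}$ up to canonical isomorphism. The unit is $R$, concentrated in Rouquier degree $0$, since $R\otimes_R M\cong M$ for every bimodule $M$. Every element has an inverse: a Rouquier complex is $C_\Rou(\sigma_{i_1}^{\epsilon_1})\otimes\cdots\otimes C_\Rou(\sigma_{i_k}^{\epsilon_k})$. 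By relation (1) of Theorem \ref{thm:Rouquier relations}, together with the symmetric relation $C_\Rou(\sigma_i^{-1})\otimes C_\Rou(\sigma_i)\cong R$ (proved the same way, or by direct Gaussian elimination), the reversed product with inverted exponents is a two-sided inverse. Closure under $\otimes$ holds by definition.

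Next I define $\Phi$ on the free group $F$ on $\sigma_1,\dots,\sigma_{n-1}$ by $\sigma_i^{\pm1}\mapsto [C_\Rou(\sigma_i^{\pm1})]$, extended multiplicatively. This is well defined on reduced words because of the inverse relations just established, so $\Phi$ is a homomorphism $F\to(\mathrm{Rou}_n,\otimes)$. It is surjective by the definition of Rouquier complexes. Relations (2) and (3) of Theorem \ref{thm:Rouquier relations} show that the defining relators of $\Br_n$ lie in $\ker\Phi$: tensoring with fixed complexes on either side preserves homotopy equivalence, so isomorphic subwords give isomorphic products. Hence $\Phi$ factors through a surjective homomorphism $\overline{\Phi}:\Br_n\to(\mathrm{Rou}_n,\otimes)$.

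The main obstacle is injectivity: if $C_\Rou(\beta)\simeq R$, then $\beta=1$. None of the relations stated so far gives this, and I expect it to need Rouquier's (or Khovanov--Thomas's) faithfulness argument. My first attempt would use the Garside normal form $\beta=\Delta^{-2k}\beta_+$, with $\beta_+$ a positive braid. Since $\Delta^2$ is central and $C_\Rou(\Delta^{2})$ is invertible, it suffices to show that $C_\Rou(\beta_+)\simeq C_\Rou(\Delta^{2k})$ forces $\beta_+=\Delta^{2k}$. For a positive braid word of length $\ell$, the minimal complex has $R$ in extremal Rouquier degree $0$ and top term $\B_{i_1}\cdots\B_{i_\ell}$ in degree $\ell$. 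I would recover the underlying positive braid from the extremal-degree data. For a reduced expression this is the Bott--Samelson bimodule, whose indecomposable top summand $\B_w$ determines the permutation $w$. This holds by Soergel's classification of indecomposables, equivalently by the Grothendieck group isomorphism $\Psi$. The plan is to iterate this via the left-greedy normal form: peel off simple factors one at a time, using the fact that $\Hom$ in $K(\mathrm{SBim}_n)$ from the lowest and highest terms detects the first Garside factor. If this bookkeeping becomes unwieldy, my fallback is a weaker claim. Decategorifying via $\Psi$ gives a homomorphism $(\mathrm{Rou}_n,\otimes)\to \H_n^\times$, $\sigma_i\mapsto q^{-1}-B_i$, and the composite with $\overline{\Phi}$ is the Hecke representation of $\Br_n$. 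That representation is not known to be faithful in general, so this fallback only yields injectivity up to its kernel. I would then cite Rouquier's theorem for the remaining faithfulness rather than reprove it.
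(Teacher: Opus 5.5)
Your first half is correct, and it is all that Theorem \ref{thm:Rouquier relations} can give. The relations, together with the mirror relation $C_\Rou(\sigma_i^{-1})\otimes C_\Rou(\sigma_i)\cong R$, make isomorphism classes of Rouquier complexes a group. They also give a well-defined surjective homomorphism $\Br_n\to(\mathrm{Rou}_n,\otimes)$. You are right that injectivity is a separate faithfulness theorem. The paper gives no argument beyond attributing the statement to Rouquier ``based on the above relations'', which glosses over exactly this point. So your fallback (surjective homomorphism from the relations, faithfulness by citation) is the paper's route, and in that form your proof is fine.

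What you cannot present as a proof is the Garside sketch. It has two concrete problems.

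First, the claim that the minimal complex of a positive word of length $\ell$ has top term $\B_{i_1}\cdots\B_{i_\ell}$ fails for non-reduced words. For $\sigma_i^2$ the tensor product is $\B_i\B_i\to q^{-1}\B_i\oplus q^{-1}\B_i\to q^{-2}R$. Let $\epsilon$ be the multiplication map $\B_i\to q^{-1}R$. Then the component $\id\otimes\epsilon$ is a split surjection of $\B_i\B_i\cong q\B_i\oplus q^{-1}\B_i$ onto $q^{-1}\B_i$, and Gaussian elimination leaves $q\B_i\to q^{-1}\B_i\to q^{-2}R$. So extremal terms of minimal complexes do not record the word. Positive braids whose words are not reduced, such as $\Delta^{2k}$, are exactly the ones you must compare against. (For a single simple braid your observation is fine: $\B_w$ cannot cancel, for length reasons.)

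Second, the assertion that $\mathrm{Hom}$ out of the extremal terms ``detects the first Garside factor'' is not bookkeeping; it is the entire content of faithfulness. You would have to control which summands cancel between consecutive simple factors of the left-greedy normal form. You would also have to show that the normal-form condition protects the summand you want to read off. A faithfulness proof along essentially these lines exists (Jensen's Garside-normal-form argument), but it is substantial.

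Also, the Grothendieck group isomorphism $\Psi$ by itself does not classify indecomposables; for that you need Soergel's classification. Either supply the faithfulness argument in full or cite it, as the paper does.
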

Having categorified both $\Br_n$ and $\mathrm{H}_n$, we turn our attention to the canonical projection $\pi: T[\Br_n] \to \mathrm{H}_n$ defined by $\pi(\sigma_i) = \sigma_i = q^{-1} - B_i$. Since $\mathrm{SBim}_n$ is additive, there is a well-defined map $\chi: K(\mathrm{SBim}_n) \to K_0(\mathrm{SBim}_n)$ defined via $\chi(D) = \sum_{\gr_R} (-1)^{\gr_R} [D_{\gr_R}]$ where $D = \bigoplus_{\gr_R}D_{\gr_R}$ is a chain homotopy type. The fact that $\chi$ categorifies $\pi$, \emph{i.e.} that $\Psi^{-1} \circ \chi \circ C_\Rou = \pi$ can quickly be seen by considering how both sides act on $\sigma_i^{\pm 1}$.

\section{Categorification of \texorpdfstring{$\mathrm{Tr}_n$}{Trn}: Hochschild homology}
Hochschild homology is a functor
$$\mathrm{HH}: (R,R)\mathrm{-Bim} \to R\mathrm{-Mod}_\Z$$
that sends $(R,R)$-bimodules to $\Z$-graded $R$-modules. Let $M \in (R,R)\mathrm{-Bim}$. As we have already seen, $(R, R)$-bimodules can be treated as $R \otimes R$-modules. Hochschild homology of $M$ is defined as the left derived tensor product $$\mathrm{HH}(M) = M \otimes_{R\otimes R}^L R.$$
In practice, this is computed by starting with a free resolution $P_M$ of $M$ or a free resolution $P_R$ of $R$, tensoring them with the other $R \otimes R$-module and taking homology, \emph{i.e.} $\mathrm{HH}(M) = H(P_M \otimes_{R \otimes R} R) = H(M \otimes_{R \otimes R} P_R)$. If one wishes, both $M$ and $R$ can be resolved as well.

\vspace{1em}

Note that \emph{a priori} $\mathrm{HH}(M)$ is an $R\otimes R$-module as opposed to merely an $R$-module. However, the left and right actions on $R$ coincide, and hence so do the left and right actions on $\mathrm{HH}(M)$. As a result, no information is lost by treating $\mathrm{HH}(M)$ as an $R$-module.

\vspace{1em}

Finally, $\mathrm{HH}$ is a functor to the category of \emph{graded} $R$-modules. The $\Z$-grading $\gr_H$ is called the \emph{Hochschild grading} and comes from the homological grading on $P_M$. The normalization is such that if $\dots \to M_2 \to M_1 \to M_0 \to 0$ is a free resolution of $M$ by $R \otimes R$-modules, then any $m \in M_i$ satisfies $\gr_H(m) = i$. 

\vspace{1em}

In our case, the $(R,R)$-bimodules $M$ will be the elements of $\mathrm{SBim}_n$, \emph{i.e.} Soergel bimodules. In particular, these $M$ are equipped with an extra internal grading $\gr_q$. We require that the differential in $P_M$ increases $\gr_q$ by $2$. The $\mathrm{HH}(M)$ is thus equipped with $2$ gradings ($\gr_H$, $\gr_q$).
\begin{example}\label{example:HHB}
Consider $\B_1 \in \mathrm{SBim}_2$. We will calculate $\mathrm{HH}(\B_1)$ in two ways.
\begin{enumerate}
    \item Note that $\B_1 = q^{-1}\frac{\F[\alpha_1, \alpha_1']}{(\alpha_1^2-\alpha_1'^2)}$ has a free resolution
    $$P_{\B_1} = q\F[\alpha_1, \alpha_1'] \xrightarrow{\alpha_1^2-\alpha_1'^2} q^{-1}\F[\alpha_1, \alpha_1']$$
    and $R = \frac{\F[\alpha_1, \alpha_1']}{(\alpha_1-\alpha_1')}$. Therefore 
    \begin{align*}
    P_{\B_1} \otimes R &= q\frac{\F[\alpha_1, \alpha_1']}{(\alpha_1-\alpha_1')} \xrightarrow{0} q^{-1}\frac{\F[\alpha_1, \alpha_1']}{(\alpha_1-\alpha_1')}\\
    &= q\F[\alpha_1] \xrightarrow{0} q^{-1}\F[\alpha_1]
    \end{align*}
    It follows that $\mathrm{HH}(\B_1) = H^1q^1\F[\alpha_1] \oplus H^0q^{-1}\F[\alpha_1]$, where the powers of $H$ denote the Hochschild grading of the elements in the summand.
    \item Alternatively, we can resolve $R$ rather than $\B_1$. In this case, we use the Koszul free resolution of $R$
    $$P_R = \F[\alpha_1, \alpha_1'] \xrightarrow{\alpha_1 - \alpha_1'} \F[\alpha_1, \alpha_1']$$
    and tensor it with $\B_1$ to obtain
    $$\B_1 \otimes P_R = q^{-1}\frac{\F[\alpha_1, \alpha_1']}{(\alpha_1^2-\alpha_1'^2)} \xrightarrow{\alpha_1 - \alpha_1'} q^{-1} \frac{\F[\alpha_1, \alpha_1']}{(\alpha_1^2-\alpha_1'^2)}.$$
    Taking homology, we see that $\mathrm{HH}(\B_1) \cong H^{1}q^1\F[\alpha_1] \oplus H^0q^{-1}\F[\alpha_1]$ as above. Note the increase in $\gr_q$ since $q^{-1} \frac{(\alpha_1+\alpha_1')\F[\alpha_1, \alpha_1']}{(\alpha_1^2-\alpha_1'^2)} \cong q\frac{\F[\alpha_1, \alpha_1']}{(\alpha_1-\alpha_1')} \cong q\F[\alpha_1]$.
\end{enumerate}
\end{example}
The number of strands matters, as can be seen from the following example.
\begin{example}
    Note that $\mathrm{HH}(R) = H^0q^0\F$ for $R \in \mathrm{SBim}_1$ and $\mathrm{HH}(R) = H^{1}q^0 \F[\alpha_1] \oplus H^0q^0\F[\alpha_1]$ for $R \in \mathrm{SBim}_2$.
\end{example}
Hochschild homology extends to a functor $\mathrm{HH}$ on chain complexes $\mathrm{Ch}(\mathrm{SBim}_n)$ by applying it term-by-term to each Soergel bimodule and differential in a chain complex $C$. Since $C$ is additionally equipped with $\gr_R$ and $\mathrm{HH}$ commutes with direct sums, the resulting $\mathrm{HH}(C)$ will be equipped with $3$ gradings $(\gr_H, \gr_q, \gr_R)$. Finally, we note that chain homotopy equivalent chain complexes $C_1 \simeq C_2$ have chain homotopy equivalent free resolutions $P_{C_1} \simeq P_{C_2}$, which further implies that $P_{C_1} \otimes_{R \otimes R} R \simeq P_{C_2} \otimes_{R\otimes R} R$. Hochschild homology is by definition the homology of these complexes, so this argument shows that $\mathrm{HH}$ descends to a well-defined functor on $K(\mathrm{SBim}_n)$.

\vspace{1em}

In the context of this paper, the importance of Hochschild homology stems from the fact that it categorifies $\mathrm{Tr}_n$. Let $M = \bigoplus M_{\gr_H, \gr_q} \in R\mathrm{-Mod}_{\Z\oplus\Z}$ be a $\Z\oplus\Z$ graded $R$-module with respect to the grading $(\gr_H, \gr_q)$. Define the functor $\eta: R$-Mod$_{\Z\oplus\Z} \to \Z[a^{\pm 1}, q^{\pm 1}, \frac{1}{q-q^{-1}}]$ via 
$$\eta(M) = (-aq)^{n-1}\sum_{\gr_H} \sum_{\gr_q} (-1)^{-\gr_H} a^{-2\gr_H} q^{\gr_q} \dim_\F M_{\gr_H, \gr_q}.$$
A formal restatement of the fact that $\mathrm{HH}$ categorifies $\mathrm{Tr}_n$ is the following theorem by Khovanov.
\begin{theorem}
\cite{khovanov2007triply}
For any $B \in \mathrm{H}_n$, we have $\mathrm{Tr}_n(B) = \eta(\mathrm{HH}(\Psi(B)))$.
\end{theorem}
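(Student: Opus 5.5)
The plan is to deduce the theorem from the uniqueness part of Ocneanu's theorem. Define $\tau_n\colon \H_n \to \Z[a^{\pm1},q^{\pm1},\frac{1}{q-q^{-1}}]$ by $\tau_n(B)=\eta(\mathrm{HH}(\Psi(B)))$. Since $\Psi(B)$ is only a class in $K_0(\mathrm{SBim}_n)$, I first need $\tau_n$ to be well defined and $T$-linear.

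\textbf{Step 1: linearity.} $\mathrm{HH}$ commutes with direct sums and with the shift $q$. The functor $\eta$ is additive and sends a $q$-shift to multiplication by $q$. I will interpret $\eta$ of an infinite-dimensional graded module as a Laurent expansion in $q$ of a rational function; for instance $\F[\alpha_1,\dots,\alpha_{n-1}]$ contributes $(1-q^2)^{-(n-1)}$. With this convention, $\eta\circ\mathrm{HH}$ factors through $K_0(\mathrm{SBim}_n)$ and $\tau_n$ is $T$-linear.

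\textbf{Step 2: properties (1) and (3).}
\begin{itemize}
\item Property (3): for $n=1$ we have $R=\F$ and $\mathrm{HH}(R)=H^0q^0\F$, as in the example above, so $\tau_1(1)=(-aq)^0\cdot 1=1$.
\item Property (1): it suffices to show $\mathrm{HH}(MN)\cong\mathrm{HH}(NM)$ for Soergel bimodules $M,N$. Soergel bimodules are free as left and as right $R$-modules, so $M\otimes_R N=M\otimes^L_R N$. Hochschild homology is then cyclic: both sides compute the derived tensor product of $M\otimes N$ with the diagonal over $R\otimes R\otimes R\otimes R$, after the two factors are swapped. This isomorphism preserves $\gr_H$ and $\gr_q$, so $\tau_n(BB')=\tau_n(B'B)$.
\end{itemize}

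\textbf{Step 3: property (2), first half.} Write $R_{n+1}=R_n\otimes\F[\gamma]$, where $\gamma\in E_{n+1}$ is an $S_n$-invariant vector complementary to $E_n$. Such a $\gamma$ exists since $\mathrm{char}(\F)\neq2$ ensures the relevant invariant is nonzero; explicitly, one can take $\gamma = \alpha_1+2\alpha_2+\dots+n\alpha_n$ up to scaling. Then $\B_j$ for $j<n$ becomes $\B_j^{(n)}\otimes_\F\F[\gamma]$, so $i(M)=M\otimes_\F\F[\gamma]$ with $\gamma$ acting equally on both sides. This is a Künneth situation, and it gives
$$\mathrm{HH}(i(M))\cong\mathrm{HH}(M)\otimes\bigl(H^0q^0\F[\gamma]\oplus H^1q^2\F[\gamma]\bigr),$$
where the second factor comes from the Koszul resolution $\F[\gamma,\gamma']\xrightarrow{\gamma-\gamma'}\F[\gamma,\gamma']$. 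The ratio $\tau_{n+1}(i(B))/\tau_n(B)$ is therefore $(-aq)$ times an explicit rational function of $a$ and $q$. I will check that this equals $\{0\}=\frac{a-a^{-1}}{q-q^{-1}}$. A naive first check appears to be off by a power of $q$ in the $a^{-1}$ term, so I must carefully pin down the $q$-degree of the Koszul generator together with the normalization of $\eta$.

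\textbf{Step 4: property (2), second half (the main obstacle).} I need $\tau_{n+1}(i(B)B_n)=\{1\}\tau_n(B)$, which is a partial-trace computation. Here $\B_n$ genuinely involves $\alpha_n$ and $\alpha_{n-1}$, so the product does not split as a tensor product. My approach is to resolve only the last direction. I first pass to the Koszul complex for the single relation involving the new variable, computing Hochschild homology in stages. I then use the explicit presentation of $\B_n$ as a quotient by $\alpha_n^2-\alpha_n'^2$ and $(\alpha_{n-1}+\frac12\alpha_n)-(\alpha_{n-1}'+\frac12\alpha_n')$. Eliminating $\alpha_n,\alpha_n'$ should show that the partial Hochschild complex of $i(M)\B_n$ is quasi-isomorphic to $M$ tensored with a two-term complex. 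This parallels Example~\ref{example:HHB}, where we obtained $H^0q^{-1}\oplus H^1q^{1}$. The resulting factor should be exactly $\{1\}=\frac{aq^{-1}-a^{-1}q}{q-q^{-1}}$.

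\textbf{Step 5: conclusion.} Uniqueness in Ocneanu's theorem gives $\tau_n=\mathrm{Tr}_n$. As a sanity check, Example~\ref{example:Trn} and Example~\ref{example:HHB} for $B_1\in\H_2$ should agree under this normalization.
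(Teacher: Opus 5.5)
The paper does not prove this statement; it is quoted from Khovanov. Your strategy of checking Ocneanu's axioms for $\eta\circ\mathrm{HH}\circ\Psi$ and invoking uniqueness is the standard one, and Steps 1, 2 and 5 are fine. However, property (2), which is the whole content of the theorem, is not actually established.

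The mismatch you noticed in Step 3 is genuine: your displayed factor gives $(-aq)\frac{1-a^{-2}q^2}{1-q^2}\neq\{0\}$. The fix is the paper's convention that the Hochschild differential raises $\gr_q$ by $2$. Under it, the Koszul generator for $\gamma-\gamma'$ sits in $H^1q^0$, not $H^1q^2$; compare $\mathrm{HH}(R)=H^1q^0\F[\alpha_1]\oplus H^0q^0\F[\alpha_1]$ for $n=2$. The factor then becomes $(-aq)\frac{1-a^{-2}}{1-q^2}=\{0\}$ exactly. Separately, $\gamma=\sum_{j=1}^n j\alpha_j$ is complementary to $E_n$ because its $\alpha_n$-coefficient is $n$. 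So the condition you need is $n\neq 0$ in $\F$, not $\mathrm{char}\,\F\neq 2$.

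Step 4 is the real gap. It states an expectation rather than an argument, and the mechanism you suggest does not decouple. Since $\alpha_n=\frac1n(\gamma-w)$ with $w=\sum_{j<n}j\alpha_j$, the element $\alpha_n\otimes1-1\otimes\alpha_n$ acts on $i(M)\B_n$ not only through $\B_n$ but also through $w\otimes1-1\otimes w$ on $M$. So eliminating $\alpha_n,\alpha_n'$ does not produce $M$ tensored with anything. Nor can the answer be $M$ tensored with a finite two-term complex: $\{1\}$ carries a factor $\frac1{1-q^2}$, which must come from a surviving $\F[\gamma]$.

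Here is one way to close the gap (with $R=R_{n+1}$).
\begin{itemize}
\item By the cyclicity behind your Step 2, $\mathrm{HH}(i(M)\otimes_{R}\B_n)\cong i(M)\otimes^L_{R\otimes R}\B_n$. The presentation of $\B_n$ is symmetric, so swapping its two sides is harmless.
\item $\B_n=q^{-1}(R\otimes R)/(u_j\otimes1-1\otimes u_j,\ \alpha_n^2\otimes1-1\otimes\alpha_n^2)$ is a complete intersection, where $u_1,\dots,u_{n-1}$ are the $\sigma_n$-invariant linear forms $\alpha_1,\dots,\alpha_{n-2},\alpha_{n-1}+\frac12\alpha_n$. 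Hence $\mathrm{HH}(i(M)\B_n)$ is the homology of the Koszul complex of $i(M)=M\otimes_{\F}\F[\gamma]$ on these $n$ elements.
\item Since $\gamma$ acts symmetrically on $i(M)$, each $u_j\otimes1-1\otimes u_j$ acts as $v_j\otimes1-1\otimes v_j$, where $v_j$ is the $E_n$-component of $u_j$. The $v_j$ form a basis of $E_n$ because $\gamma-\sigma_n(\gamma)=(n+1)\alpha_n\neq0$, which needs $n+1\neq0$ in $\F$. So the first $n-1$ Koszul factors give a complex $K$ computing $\mathrm{HH}(M)\otimes\F[\gamma]$.
\item The last element acts as $\frac1{n^2}(w'-w)(2\gamma-w-w')$. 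Since $w-w'$ is a linear combination of the $v_j\otimes1-1\otimes v_j$, this multiplication is null-homotopic on $K$, and the final cone splits.
\item You obtain $\mathrm{HH}(i(M)\B_n)\cong\mathrm{HH}(M)\otimes\F[\gamma]\otimes(H^0q^{-1}\F\oplus H^1q^{1}\F)$. This gives the factor $(-aq)\frac{q^{-1}-a^{-2}q}{1-q^2}=\{1\}$.
\end{itemize}
The splitting really is needed. Because $\eta$ weights different Hochschild degrees by different powers of $a$, knowing the homology only up to a spectral sequence would not suffice.
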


\begin{example}
    We have seen in Example \ref{example:Trn} that $\mathrm{Tr}_2(B_1) = \frac{aq^{-1}-a^{-1}q}{q-q^{-1}}$. We have also seen in Example \ref{example:HHB} that $\mathrm{HH}(\B_1) = H^1q^1\F[\alpha_1] \oplus H^0q^{-1}\F[\alpha_1]$. Since $n=2$ we have $\eta(\mathrm{HH}(\B_1)) = -aq (-\frac{a^{-2}q^1}{1-q^2} + \frac{a^0q^{-1}}{1-q^2}) = \frac{aq^{-1}-a^{-1}q}{q-q^{-1}}$ as required.
\end{example}
\begin{example}
    Consider $1 \in \mathrm{H}_2$. Note that $\mathrm{Tr}_2(1) = \{0\} = \frac{a-a^{-1}}{q-q^{-1}}$ by Example \ref{example:Trn} and correspondingly also $\eta(\mathrm{HH}(R)) = -aq(-\frac{a^{-2}}{1-q^2} + \frac{1}{1-q^2}) = \frac{a-a^{-1}}{q-q^{-1}}$ as required. 
\end{example}

\section{Khovanov-Rozansky homology}
We are finally in a position to make the following definition.
\begin{definition}
Let $K =\overline{\beta}$ be a knot, represented as the closure of some braid $\beta \in \mathrm{Br}_n$ for some $n \in \N$. The \emph{reduced Khovanov-Rozansky homology} of $K$ is $\widehat{\mathit{H}}(K) = H_*(\mathrm{HH}(C_\Rou(\beta)))$.  
\end{definition}
$\widehat{\mathit{H}}(K)$ is a $\Z\oplus\Z\oplus\Z$-graded vector space over $\F$. Instead of the gradings $(\gr_H, \gr_q, \gr_R)$ considered thus far, it is customary to pass to the new gradings $(\gr_a, \gr_q, \gr_t)$ via
\begin{align*}
\gr_a &= -2\gr_H + w + (n-1)\\
\gr_q &= \gr_q + (n-1)\\
\gr_t &= \gr_R - \gr_H + (n-1).
\end{align*}
Let us be very explicit about the unfortunate standard practice in the literature that $\gr_q$ appears both on the left and on the right, but with different meanings. The middle equation $\gr_q = \gr_q + (n-1)$ should be interpreted as saying that the \emph{old} $\gr_q$ that has been considered thus far should be shifted upwards by $n-1$ to obtain the \emph{new} $\gr_q$. Whenever we use the gradings $(\gr_a, \gr_q, \gr_t)$, it should always be assumed that the new $\gr_q$ is used.

\vspace{1em}

The main result about Khovanov-Rozansky homology is that it categorifies the HOMFLY polynomial.
\begin{theorem}
\cite{khovanov2008matrix}
The HOMFLY polynomial $P_K$ of a knot $K$ can be expressed as $P_K(a, q) = \sum_{\gr_a, \gr_q, \gr_t} (-1)^{\gr_t} a^{\gr_a} q^{\gr_q} \dim_\F \widehat{\mathit{H}}(K)_{\gr_a, \gr_q, \gr_t}$.
\label{thm:HOMFLY homology categorifies HOMFLY polynomial}
\end{theorem}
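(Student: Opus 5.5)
We prove the statement by decategorifying: we pass from the triply graded homology to an alternating sum and reduce everything to the Markov trace identity $P_{\,\overline{\beta}}(a,q) = a^{w}\,\mathrm{Tr}_n(\pi(\beta))$ of Jones. The strategy has three steps: compute the Euler characteristic of $\mathrm{HH}(C_\Rou(\beta))$ in terms of $\eta$ and $\chi$, invoke Khovanov's theorem that $\mathrm{HH}$ categorifies $\mathrm{Tr}_n$, and then match the grading conventions $(\gr_a,\gr_q,\gr_t)$ against the normalization of $\eta$.

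First, we note that $\mathrm{HH}(C_\Rou(\beta))$ is a chain complex with differential of $\gr_R$-degree $-1$ that preserves $\gr_H$ and $\gr_q$. In each fixed bidegree $(\gr_H,\gr_q)$, every term is a finite dimensional $\F$-vector space: the complex has finitely many terms, and each $\mathrm{HH}$ of a Soergel bimodule is a finitely generated graded $R$-module with bounded-below $\gr_q$. So the Euler characteristic in the $\gr_R$ direction can be computed on chains instead of on homology, one bidegree at a time. This gives the formal power series identity
$$\sum (-1)^{\gr_R}(-1)^{\gr_H} a^{-2\gr_H}q^{\gr_q}\dim \widehat{\mathit{H}}(K)_{\gr_H,\gr_q,\gr_R} = \sum_{k}(-1)^k \,(-aq)^{-(n-1)}\,\eta\big(\mathrm{HH}(C_\Rou(\beta)_k)\big).$$
By additivity of $\eta\circ\mathrm{HH}$ on direct sums and its compatibility with $q$-shifts, the right-hand side equals $(-aq)^{-(n-1)}\,\eta(\mathrm{HH}(\chi(C_\Rou(\beta))))$, where $\eta\circ \mathrm{HH}$ is interpreted on $K_0$. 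Using $\Psi^{-1}\circ\chi\circ C_\Rou=\pi$ together with Khovanov's theorem $\mathrm{Tr}_n(B)=\eta(\mathrm{HH}(\Psi(B)))$, this becomes $(-aq)^{-(n-1)}\mathrm{Tr}_n(\pi(\beta))$. Well-definedness on $K(\mathrm{SBim}_n)$ comes from the homotopy invariance of $\mathrm{HH}$ established earlier.

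Second, we substitute the new gradings. The exponents transform as $\gr_a=-2\gr_H+w+(n-1)$, the new $\gr_q$ is the old one plus $(n-1)$, and $(-1)^{\gr_t}=(-1)^{\gr_R-\gr_H+(n-1)}$. Therefore
$$\sum (-1)^{\gr_t}a^{\gr_a}q^{\gr_q}\dim = (-1)^{n-1}a^{w+n-1}q^{n-1}\sum(-1)^{\gr_R-\gr_H}a^{-2\gr_H}q^{\gr_q}\dim,$$
and the prefactor is exactly $a^w(-aq)^{n-1}$. It cancels the $(-aq)^{-(n-1)}$ from the first step, leaving $a^{w}\mathrm{Tr}_n(\pi(\beta))=P_K(a,q)$ by Jones's theorem.

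The main obstacle is the first step: making sense of the Euler characteristic when the homology is infinite dimensional and, for the reduced theory on a knot, should be finite only after taking homology. We would handle this by working throughout in the ring of Laurent series in $q$ (using that $\gr_q$ is bounded below in each $\mathrm{HH}$) and checking that $\eta$ lands in rational functions in a way compatible with the expansion $\frac{1}{1-q^2}=\sum q^{2k}$, as in the examples. Then the identity between formal series implies the identity of Laurent polynomials, because $P_K$ is a polynomial. A secondary point to verify is the sign $(-1)^{-\gr_H}$ versus $(-1)^{\gr_H}$, which agree, and that the $\gr_R$ orientation matches the sign convention in $\chi$.
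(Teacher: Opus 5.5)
Your argument is correct, but it does not follow the paper, because the paper gives no proof of this statement. It simply quotes it from Khovanov--Rozansky, whose original argument lives in the matrix-factorization framework.

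\textbf{What you do instead.} You derive the statement directly for the paper's definition $\widehat{\mathit{H}}(K)=H_*(\mathrm{HH}(C_\Rou(\beta)))$. You combine three decategorification facts the paper records earlier: Jones's formula $P_{\overline{\beta}}=a^{w}\mathrm{Tr}_n(\pi(\beta))$, Khovanov's identity $\mathrm{Tr}_n(B)=\eta(\mathrm{HH}(\Psi(B)))$, and $\Psi^{-1}\circ\chi\circ C_\Rou=\pi$. You join these with an Euler characteristic computed one $(\gr_H,\gr_q)$-bidegree at a time. That step is legitimate for three reasons: the complex is bounded, each bigraded piece of each chain group is finite dimensional, and the differential preserves $(\gr_H,\gr_q)$. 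Your regrading prefactor $(-1)^{n-1}a^{w+n-1}q^{n-1}=a^{w}(-aq)^{n-1}$ also cancels the normalization built into $\eta$ exactly. In effect this is Khovanov's decategorification argument in the Soergel-bimodule setting.

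\textbf{What each route buys.} Your route applies verbatim to the Hochschild-homology definition used here. So it needs no identification of that theory with matrix-factorization homology, which is a nontrivial comparison and precisely the kind of folklore equivalence this paper is concerned with. It also needs no finite dimensionality of $\widehat{\mathit{H}}(K)$: the identity holds in $\Z[a^{\pm 1}]((q))$, and the right-hand side happens to be a polynomial. The cost is that all the real content is delegated to Khovanov's trace theorem.

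\textbf{One caveat.} If you verify $\Psi^{-1}\circ\chi\circ C_\Rou=\pi$ on generators yourself, note that $\chi(C_\Rou(\sigma_i^{-1}))=q-B_i$. This equals $\pi(\sigma_i^{-1})$ only under the relation $\sigma_i^2=(q^{-1}-q)\sigma_i+1$, which is what the $B_i$-presentation actually forces. The first displayed Hecke relation in the paper has the opposite sign. The paper's examples, however, use the consistent convention, e.g.\ $\mathrm{Tr}_2(q-B_1)=a$ for $\overline{\sigma_1^{-1}}$ and the trefoil computation.
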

We calculate Khovanov-Rozansky homology for two diagrams of the unknot and for a negative trefoil.
\begin{example}
    Let $n = 2$ and consider $\sigma_1^{-1} \in \Br_2$ with closure $\overline{\sigma_1^{-1}} = U$. The writhe of the diagram is $w = -1$. The braid only has one crossing, so its Rouquier complex is just the elementary Rouquier complex $C_\Rou(\sigma_1^{-1}) = qR \xrightarrow{\alpha_1 + \alpha_1'} \B_1$ itself. Taking the Hochschild homology with the help of calculations in Example \ref{example:HHB} yields $\mathrm{HH}(C_\Rou(\sigma_1^{-1}))$ to be
    $$H^1q^1R^0\F[\alpha_1] \oplus H^0q^1R^0\F[\alpha_1] \xrightarrow{\begin{pmatrix}
    1 & 0\\
    0 & 2\alpha_1
    \end{pmatrix}} H^1q^1R^{-1}\F[\alpha_1] \oplus H^0q^{-1}R^{-1}\F[\alpha_1].$$
    Taking homology of the complex gives $\widehat{\mathit{H}}(U) \cong \F_{0, -1, -1}$ in terms of the gradings $(\gr_H, \gr_q, \gr_R)$. Translating to the gradings $(\gr_a, \gr_q, \gr_t)$ gives $\widehat{\mathit{H}}(U) \cong \F_{0, 0, 0}$. Note that this calculation is consistent with Theorem \ref{thm:HOMFLY homology categorifies HOMFLY polynomial} and Example \ref{example:Trn} since it decategorifies to $P_U = 1$.
\end{example}
\begin{example}
    Similarly, we also calculate the HOMFLY homology of the unknot with the use of a different diagram $\sigma_1$. We still have $n=2$, but $w = 1$. The Rouquier complex is $C_\Rou(\sigma_1) = \B_1 \xrightarrow{1} q^{-1}R$ and taking the Hochschild homology yields $\mathrm{HH}(C_\Rou(\sigma_1^{-1}))$ to be  
    $$H^1q^1R^1\F[\alpha_1] \oplus H^0q^{-1}R^1\F[\alpha_1] \xrightarrow{\begin{pmatrix}
    2\alpha_1 & 0\\
    0 & 1
    \end{pmatrix}} H^1q^{-1}R^0\F[\alpha_1] \oplus H^0q^{-1}R^0\F[\alpha_1].$$
    Taking homology of the complex gives $\widehat{\mathit{H}}(U) \cong \F_{1, -1, 0}$ in terms of the gradings $(\gr_H, \gr_q, \gr_R)$. Translating to the gradings $(\gr_a, \gr_q, \gr_t)$ gives $\widehat{\mathit{H}}(U) \cong \F_{0, 0, 0}$. This coincides with the example above.
\end{example}
\begin{example}
    Let $n = 2$ and consider $\sigma_1^{-3} \in \Br_2$ whose closure is the negative trefoil $\overline{\sigma_1^{-3}} = T_{2, -3}$. We have $w = -3$. The Rouquier complex can be seen to be homotopy equivalent to 
    $$C_\Rou(\sigma_1^{-3}) = \otimes_{i=1}^3 C_\Rou(\sigma_1^{-1}) \simeq (q^3 R \xrightarrow{\alpha_1+\alpha_1'} q^2 \B_1 \xrightarrow{0} \B_1 \xrightarrow{\alpha_1+\alpha_1'} q^{-2}\B_1).$$
    Taking the Hochschild homology yields $\mathrm{HH}(C_\Rou(\sigma_1^{-3}))$ to be 
$$
\begin{tikzcd}
    H^1q^{-1}R^{-3}\F[\alpha_1] & H^0q^{-3}R^{-3}\F[\alpha_1]\\
    H^1q^1R^{-2}\F[\alpha_1] \arrow[u, "2\alpha_1"] & H^0q^{-1}R^{-2}\F[\alpha_1] \arrow[u, "2\alpha_1", swap]\\
    H^1q^3R^{-1}\F[\alpha_1] & H^0q^1R^{-1}\F[\alpha_1]\\
    H^1q^3R^0\F[\alpha_1] \arrow[u, "1"] & H^0q^3R^{0}\F[\alpha_1] \arrow[u, "2\alpha_1", swap]
\end{tikzcd}
$$
so $\widehat{\mathit{H}}(T_{2,-3}) \cong \F_{1, -1, -3} \oplus \F_{0, -3, -3} \oplus \F_{0, 1, -1}$ in terms of the gradings $(\gr_H, \gr_q, \gr_R)$. Translating the result to gradings $(\gr_a, \gr_q, \gr_t)$ yields $\widehat{\mathit{H}}(T_{2,-3}) \cong \F_{-4, 0, -3} \oplus \F_{-2, -2, -2} \oplus \F_{-2, 2, 0}$. This result is expected since the HOMFLY polynomial of the negative trefoil is $P_{T_{2, -3}} = -a^{-4} + a^{-2}q^{-2} + a^{-2}q^2$, which is in agreement with Theorem \ref{thm:HOMFLY homology categorifies HOMFLY polynomial}.
\end{example}
\begin{remark}
    Introducing new gradings at the final step might seem rather artificial. Indeed, the gradings $(\gr_a, \gr_q, \gr_t)$ could also be defined directly by specifying that the Hochschild differential increases $\gr_t$ by $1$, $\gr_a$ by $2$, and some normalization conditions for pinpointing the grading shifts.
\end{remark}

\section{Rasmussen's construction}
While the described categorification of the HOMFLY polynomial is algebraically slick and relatively amenable to calculations, the non-canonical choice of a free resolution for Hochschild homology in practice makes it challenging to establish structural properties of Khovanov-Rozansky homology. This section introduces a different approach to Khovanov-Rozansky homology due Rasmussen \cite{rasmussen2016some} with a complementary character -- it is difficult to calculate with, but very useful for proving theoretical results. We describe a variation of his construction that has been adapted for reduced Khovanov-Rozansky homology. In our construction, the variables of the polynomial ring correspond to the regions of the braid rather than to its strands.

\vspace{1em}

Let $\beta \in \Br_n$ be a braid with $n$ strands. In the standard fashion, we can express $\beta$ as a product of elementary braids $\beta = \prod_{k = 1}^c \sigma_{i_k}^{a_k}$ where $i_k \in \{1, \dots, n-1\}$ and $a_k \in \{1, -1\}$. As was previously the case with Rouquier complexes, the construction of Rasmussen's double chain complex $C_{\Ras}(\beta)$ is local in the sense that it assigns double chain complexes to individual crossings and tensors them together to obtain an invariant of the entire braid.

\subsection{Ring \texorpdfstring{$Q_\beta$}{Qbeta}}
We first define $Q_\beta$, the underlying ring of the Rasmussen's double chain complex $C_{\Ras}(\beta)$. Let $1, \dots, n-1+c$ denote the planar regions of $\beta$, labeled from left to right and bottom to top. See Figure \ref{fig:regions of sigmai} for an example where the braid is a single crossing $\sigma_i$. For each $i$, we associate a variable $\alpha_i$ to the bottom of the region $i$.
\begin{figure}[t]
    \begin{tikzpicture}[scale=1]
    \draw[knot] (1,0) to[out=90,in=-90] (0,1);
    \draw[knot] (0,0) to[out=90,in=-90] (1,1);
    \draw[very thick] (1,-1) to (1,0);
    \draw[very thick] (1,1) to (1,2);
    \draw[very thick] (0,-1) to (0,0);
    \draw[very thick] (0,1) to (0,2);
    \draw[very thick] (2,-1) to (2,2);
    \draw[very thick] (3,-1) to (3,2);
    \draw[very thick] (-1,-1) to (-1,2);
    \draw[very thick] (-2,-1) to (-2,2);
    \draw[very thick] (4,-1) to (4,2);

    \draw[] (-1.5,-0.25) node[anchor = north]{$1$};
    \draw[] (-0.5,-0.25) node[anchor = north]{$\cdots$};
    \draw[] (0.5,-0.25) node[anchor = north]{$i$};
    \draw[] (1.5,-0.25) node[anchor = north]{$\cdots$};
    \draw[] (2.5,-0.25) node[anchor = north]{$n-2$};
    \draw[] (3.5,-0.25) node[anchor = north]{$n-1$};
    \draw[] (0.5,1.25) node[anchor = south]{$n$};
    \end{tikzpicture}  
    \caption{Regions associated to an elementary braid $\sigma_i$.}
    \label{fig:regions of sigmai}
\end{figure}

\begin{definition}\label{def:Rasmussen's ring}
$Q_\beta = \F[\alpha_1, \dots, \alpha_{n-1+c}]$.
\end{definition}
We now define \emph{bottom} and \emph{top} actions on $Q_\beta$. The bottom action is simply multiplication by the variables $\alpha_1, \dots, \alpha_{n-1}$. The top action is multiplication by the variables $\alpha_1', \dots, \alpha_{n-1}'$ which are certain linear combinations of the original variables $\alpha_1, \dots, \alpha_{n-1+c}$. We define them inductively, beginning with a single crossing $\sigma_i^{\pm 1}$ with $Q_{\sigma_i^{\pm 1}} = \F[\alpha_1, \dots, \alpha_n]$. The variables $\alpha_1', \dots, \alpha_{n-1}'$ for the top action are
\begin{align*}
    &\alpha_i' = \alpha_n,\\
    &\alpha_{i \pm 1}' = \alpha_{i\pm 1} + \frac{1}{2}(\alpha_i-\alpha_n), \text{ and }\\
    &\alpha_{j}' = \alpha_j \text{ for $j$ with $|i-j|>1$.} 
\end{align*}
The top action for a general braid $\beta$ is defined inductively through the following result.
\begin{proposition}\label{prop:braid rings}
Let $\beta_1, \beta_2 \in \Br_n$ be braids. Then $Q_{\beta_1\beta_2} \cong Q_{\beta_1} \otimes_{\F[\gamma_1, \dots, \gamma_{n-1}]} Q_{\beta_2}$ where $\F[\gamma_1, \dots, \gamma_{n-1}]$ acts on $Q_{\beta_1}$ by the top and on $Q_{\beta_2}$ by the bottom action.
\end{proposition}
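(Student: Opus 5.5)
We prove Proposition \ref{prop:braid rings} by showing that the tensor product on the right is a polynomial ring in exactly the variables that $Q_{\beta_1\beta_2}$ is built from. Let $\beta_1$ and $\beta_2$ have $c_1$ and $c_2$ crossings, so that $Q_{\beta_1} = \F[\alpha_1, \dots, \alpha_{n-1+c_1}]$ and $Q_{\beta_2} = \F[\alpha_1, \dots, \alpha_{n-1+c_2}]$, while $Q_{\beta_1\beta_2}$ is a polynomial ring in $n-1+c_1+c_2$ variables. The key observation is that the top action of $\F[\gamma_1, \dots, \gamma_{n-1}]$ on $Q_{\beta_1}$ and the bottom action on $Q_{\beta_2}$ are both given by \emph{linear} substitutions $\gamma_j \mapsto \alpha_j'$ and $\gamma_j \mapsto \alpha_j$. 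The second is the inclusion of a polynomial subring generated by part of a set of variables.

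\textbf{Step 1: a change of variables on $Q_{\beta_2}$.} The bottom variables $\alpha_1, \dots, \alpha_{n-1}$ are among the free generators of $Q_{\beta_2}$. Hence
$$Q_{\beta_2} \cong \F[\gamma_1, \dots, \gamma_{n-1}] \otimes_\F \F[\alpha_n, \dots, \alpha_{n-1+c_2}]$$
as $\F[\gamma]$-modules, and $Q_{\beta_2}$ is free over $\F[\gamma]$. Consequently
$$Q_{\beta_1} \otimes_{\F[\gamma]} Q_{\beta_2} \cong Q_{\beta_1} \otimes_\F \F[\alpha_n, \dots, \alpha_{n-1+c_2}] \cong \F[\alpha_1, \dots, \alpha_{n-1+c_1}, \tilde\alpha_{n}, \dots, \tilde\alpha_{n-1+c_2}].$$
Here each bottom variable $\alpha_j$ of $Q_{\beta_2}$ ($j \le n-1$) has been identified with the top variable $\alpha_j'$ of $Q_{\beta_1}$. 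This is a polynomial ring in $n-1+c_1+c_2$ variables. Relabel the new variables $\tilde\alpha_{n-1+k}$ as $\alpha_{n-1+c_1+k}$, matching the left-to-right, bottom-to-top labelling of regions of $\beta_1\beta_2$: the regions of $\beta_2$ above its bottom boundary come after those of $\beta_1$. The bottom regions of $\beta_2$ are identified with the top regions of $\beta_1$, which is consistent with the region picture.

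\textbf{Step 2: compatibility of actions.} The isomorphism must also carry the bottom and top actions correctly. The bottom action of the product is by $\alpha_1, \dots, \alpha_{n-1} \in Q_{\beta_1}$, as it should be. The top action is by the top variables of $Q_{\beta_2}$, which are linear in $\alpha_1, \dots, \alpha_{n-1+c_2}$, with $\alpha_1, \dots, \alpha_{n-1}$ now replaced by the $\alpha_j'$ of $\beta_1$. This is precisely the inductive definition: the statement should really be read as the definition of $Q_\beta$ together with its top action for general $\beta$, via induction on $c$ starting from the single-crossing case.

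\textbf{Step 3: well-definedness.} To make this a genuine proposition we must check that the result does not depend on the bracketing of $\beta = \prod_k \sigma_{i_k}^{a_k}$. This follows from associativity of tensor products: $(Q_{\beta_1}\otimes Q_{\beta_2})\otimes Q_{\beta_3} \cong Q_{\beta_1}\otimes (Q_{\beta_2}\otimes Q_{\beta_3})$, with matching top and bottom actions. We also need that the variable count $n-1+c$ agrees with Definition \ref{def:Rasmussen's ring}, which Step 1 already shows.

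The main obstacle is bookkeeping rather than mathematics. We must check that the top variables of $\beta_1$ are well defined as linear forms, so that the substitution in Step 1 is legitimate. We must also make sure the relabelling matches the planar region ordering. Freeness in Step 1 is immediate, since the bottom variables form part of a basis of the linear forms.
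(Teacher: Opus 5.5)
Your proposal is correct and follows the paper's own argument. In both, the tensor product discards the bottom variables $\alpha_1,\dots,\alpha_{n-1}$ of $Q_{\beta_2}$ in favour of the top linear forms $\alpha_j'$ of $Q_{\beta_1}$, leaving a polynomial ring on the regions of $\beta_1$ together with the non-bottom regions of $\beta_2$. Your Step 1 makes the paper's ``it follows'' precise via $A \otimes_{\F[\gamma]} \F[\gamma][x_1,\dots,x_m] \cong A[x_1,\dots,x_m]$, which in fact needs no linearity of the top action, and your bracketing check in Step 3 is a harmless addition that the paper omits.
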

This allows one to use the top action on $Q_{\beta_2}$ to define the top action on $Q_{\beta_1\beta_2}$. Since the top action has already been defined on individual crossings, Proposition \ref{prop:braid rings} extends it to an arbitrary braid $\beta$.
\begin{proof}[Proof of Proposition \ref{prop:braid rings}]
Let the braids $\beta_1, \beta_2$ have $c_1, c_2$ crossings respectively. Tensor product identifies the bottom action on $Q_{\beta_2}$ with the top action on $Q_{\beta_1}$. The bottom action on $Q_{\beta_2}$ is given by multiplication by $\alpha_1, \dots, \alpha_{n-1} \in Q_{\beta_2}$ and the top action on $Q_{\beta_1}$ is given by multiplication by $\alpha_1', \dots, \alpha_{n-1}' \in Q_{\beta_1}$, which are linear combinations of $\alpha_1, \dots, \alpha_{n-1+c_1} \in Q_{\beta_1}$. It follows that $Q_{\beta_1} \otimes_{\F[\gamma_1, \dots, \gamma_{n-1}]} Q_{\beta_2}$ is a polynomial ring with variables $\alpha_1, \dots, \alpha_{n-1+c_1} \in Q_{\beta_1}$ and $\alpha_{n}, \dots, \alpha_{n-1+c_2} \in Q_{\beta_2}$. In other words, tensor product omits the variables $\alpha_1, \dots, \alpha_{n-1} \in Q_{\beta_2}$, since they are linear combinations of the variables from $Q_{\beta_1}$. In our pictorial definition, this means that $Q_{\beta_1} \otimes_{\F[\gamma_1, \dots, \gamma_{n-1}]} Q_{\beta_2}$ is generated by the regions of $\beta_1$ and $\beta_2$ except the bottom regions of $\beta_2$. This is what we want, since in $\beta_1\beta_2$, the bottom regions of $\beta_2$ become identified with the top regions of $\beta_1$.
\end{proof}
\begin{example}
Let us compute the top action for $\beta = \sigma_1\sigma_2 \in \Br_3$. Let $Q_{\sigma_1} = \F[\alpha_1, \alpha_2, \alpha_3]$ and $Q_{\sigma_2} = \F[\alpha_4, \alpha_5, \alpha_6]$ where the regions are labeled as shown in Figure \ref{fig:sigma1sigma2}. The top action on $Q_{\sigma_1}$ is given by $\alpha_1' = \alpha_3$ and $\alpha_2' = \alpha_2 + \frac{1}{2}(\alpha_1-\alpha_3)$ and the top action on $Q_{\sigma_2}$ is given by $\alpha_4' = \alpha_4+\frac{1}{2}(\alpha_5-\alpha_6)$ and $\alpha_5' = \alpha_6$. Identifying the top action on $Q_{\sigma_1}$ with the bottom action on $Q_{\sigma_2}$ as in the proof of Proposition \ref{prop:braid rings} gives that $Q_{\sigma_1\sigma_2} \cong \frac{\F[\alpha_1, \dots, \alpha_6]}{(\alpha_4 = \alpha_3, \alpha_5 = \alpha_2 + \frac{1}{2}(\alpha_1-\alpha_3))} \cong \F[\alpha_1, \alpha_2, \alpha_3, \alpha_6]$. Note that these variables correspond precisely to the regions of the composite braid $\sigma_1\sigma_2$. The top action on $Q_{\sigma_1\sigma_2}$ is now given by $\alpha_4' = \alpha_4 + \frac{1}{2}(\alpha_5-\alpha_6) = \frac{1}{4}\alpha_1+\frac{1}{2}\alpha_2+\frac{3}{4}\alpha_3-\frac{1}{2}\alpha_6$ and $\alpha_5' = \alpha_6$.
\begin{figure}[t]
    \begin{tikzpicture}[scale=1.0]
    \def\a{0.2}
    \draw[knot] (1,0) to[out=90,in=-90] (0,1);
    \draw[knot] (0,0) to[out=90,in=-90] (1,1);
    \draw[knot] (2,1+\a) to[out=90,in=-90] (1,2+\a);
    \draw[knot] (1,1+\a) to[out=90,in=-90] (2,2+\a);
    \draw[very thick] (2,0) to (2,1);
    \draw[very thick] (0,1+\a) to (0,2+\a);
    \draw[] (0.5, +0.35) node[anchor = north]{$\alpha_1$};
    \draw[] (1.5, +0.35) node[anchor = north]{$\alpha_2$};
    \draw[] (0.5, +0.65) node[anchor = south]{$\alpha_3$};
    \draw[] (0.5, +1.35+\a) node[anchor = north]{$\alpha_4$};
    \draw[] (1.5, +1.35+\a) node[anchor = north]{$\alpha_5$};
    \draw[] (1.5, +1.65+\a) node[anchor = south]{$\alpha_6$};
    \end{tikzpicture}  
    \caption{$\beta = \sigma_1 \sigma_2$. The calculation of the top action on $\beta$ is given in Example \ref{example:sigma1sigma2}.}
    \label{fig:sigma1sigma2}
\end{figure}
\end{example}\label{example:sigma1sigma2}

\subsection{Dual variables and flows}\label{subsection:Flows}
This subsection contains some technical tools that will be useful in the proof of Theorem \ref{thm:equivalence Ras and Kho}, but do not appear elsewhere in the paper. Therefore, we recommend that it be skipped during the first reading of the paper. 

Recall that each region $i \in \{1, \dots, n-1+c\}$ of $\beta$ received a corresponding variable $\alpha_i$ associated to its bottom part. We shall now introduce \emph{dual} variables $\alpha_1^\vee, \dots, \alpha_{n-1+c}^\vee \in Q_\beta$ that represent the top parts of the same regions. Intuitively speaking, ``we turn the braid upside down'' and repeat the construction of the previous subsection. More formally, the dual variables $\alpha_1^\vee, \dots, \alpha_{n-1+c}^\vee$ are certain linear combinations of $\alpha_1, \dots, \alpha_{n-1+c}$ and are defined in the following manner.
\begin{itemize}
    \item If $i$ is the open region lying near the top of $\beta$ between the $j^{\text{th}}$ and $(j+1)^{\text{st}}$ strand, then $\alpha_i^\vee = \alpha_j'$.
    \item If $i$ is a closed region, then $\beta = \beta_1 \sigma_j^{\pm 1} \beta_2$ where $j$ is such that the highest part of $i$ lies between the $j^{\text{th}}$ and $(j+1)^{\text{st}}$ strand and $\beta_1$ and $\beta_2$ are the parts of $\beta$ below and above this crossing respectively. Then there is a well-defined action of $\alpha_j$ on $Q_{\sigma_j^{\pm 1}}$ and therefore on $Q_\beta = Q_{\beta_1 \sigma_j^{\pm 1} \beta_2}$ by Proposition \ref{prop:braid rings}. Define $\alpha_i^\vee$ to be this action.
\end{itemize}
The above definition is rather theoretical and does not explain how to express $\alpha_i^\vee$ as a linear combination of $\alpha_1, \dots, \alpha_{n-1+c}$ in practice. The remaining part of this subsection describes a geometric way of resolving this problem.

\vspace{1em}

Draw a braid $\beta$ and mark the variables $\alpha_1, \dots, \alpha_{n-1+c}$ by black dots near the bottom of their regions. Additionally, mark the variable $\alpha_i^\vee$ by a red dot near the top of the region $i$. We interpret black dots as \emph{sinks}, the red dot as a \emph{source} we find an expression of $\alpha_i^\vee$ as a linear combination of $\alpha_1, \dots, \alpha_{n-1+c}$ by \emph{flowing} from the red dot to the black ones, following some rules.
\begin{enumerate}
    \item In each region, we flow in a vertical direction towards the black dot.
    \item Each part of the flow has an associated coefficient, which we call the \emph{flow rate}. The initial flow rate near the red dot is $1$. If the strand immediately to the right or immediately to the left of the flow with rate $c$ enters a crossing, a part of the flow splits away and enters the parallel regions with flow rates $\pm \frac{c}{2}$. If the flow changes a direction, the rate is $-\frac{c}{2}$ and if the flow continues flowing in the same direction, the rate is $\frac{c}{2}$.
    \item We flow until all parts of the flow have reached the black dots.
\end{enumerate}
Parts of the flow are called \emph{flowlines}. Let $c_j$ be the \emph{total flow rate} near the black dot associated to $\alpha_j$, \emph{i.e.} the sum of flow rates of all flowlines that have reached this point. Then $\alpha_i^\vee = \sum_{j=1}^{n-1+c}c_j\alpha_j$.

\begin{example}
Let us demonstrate the flowing process on $\beta = \sigma_1 \sigma_2 \sigma_1^{-1} \in \Br_3$ to express $\alpha_3^{\vee}$ as a linear combination of $\alpha_1, \alpha_2, \alpha_3, \alpha_4$, and $\alpha_5$. The braid $\beta$ is depicted in Figure \ref{fig:example of flow} together with $5$ black dots representing the sinks $\alpha_1, \alpha_2, \alpha_3, \alpha_4$, and $\alpha_5$ and $1$ red dot representing the source $\alpha_3^\vee$. The flow starts at the red dot and flows downwards towards the black dot $\alpha_3$. Its flow rate is $1$. At some point, the strand immediately to the left of the flow enters the crossing $\sigma_2$ and we are in the situation from (2). The flow splits into three flowlines with coefficients $1$, $\frac{1}{2}$ and $-\frac{1}{2}$. More precisely, the flowline that continues straight downwards has rate $1$, the flowline that changes the direction and flows upwards has rate $-\frac{1}{2}$ and the remaining flowline has rate $\frac{1}{2}$. Each of these three flowlines flows onward, obeying the same rules and undergoing more splits, until they eventually reach the black dots. Figure \ref{fig:example of flow} depicts the final stage of the flow. Let us now express $\alpha_3^{\vee}$ as a linear combination of $\alpha_1, \dots, \alpha_5$. Note that there are a total of two ways of reaching $\alpha_3$, one with coefficient $1$ and another with coefficient $-\frac{1}{4}$. Therefore, $c_3 = 1-\frac{1}{4} = \frac{3}{4}$. All other black dots can be reached at most once. Keeping track of the corresponding flow rates gives us $\alpha_3^{\vee} = \frac{1}{4}\alpha_1 + \frac{1}{2}\alpha_2 + \frac{3}{4}\alpha_3 - \frac{1}{2}\alpha_4$. 
\end{example}
\begin{figure}[t]
    \begin{tikzpicture}[scale=1]
    \draw[knot] (1,0) to[out=90,in=-90] (0,1);
    \draw[knot] (0,0) to[out=90,in=-90] (1,1);
    \draw[knot] (2,1) to[out=90,in=-90] (1,2);
    \draw[knot] (1,1) to[out=90,in=-90] (2,2);
    \draw[knot] (0,2) to[out=90,in=-90] (1,3);
    \draw[knot] (1,2) to[out=90,in=-90] (0,3);
    
    \draw[red, thick] (0.5,2.25) to[out=-90,in=180] (1.35,1.5);
    \draw[red, thick] (1.35, 1.5) to[out=0,in=270] (1.5,1.75);
    \draw[red, thick] (1.35, 1.5) to[out=0,in=90] (1.5,1.25);
    \draw[red, thick] (0.5,2.25) -- (0.5,0.75);
    \draw[red, thick] (1.5,1.25) -- (1.5,0);
    \draw[red, thick] (1.5,1.25) to[out=270,in=0] (0.65,0.5);
    \draw[red, thick] (0.65, 0.5) to[out=180,in=270] (0.5,0.75);
    \draw[red, thick] (0.65, 0.5) to[out=180,in=90] (0.5,0);
    
    \draw[very thick] (2,2) to (2,3);
    \draw[very thick] (0,1) to (0,2);
    \draw[very thick] (2,0) to (2,1);

    \filldraw[] (0.5,0) circle (2pt);
    \filldraw[] (1.5,0) circle (2pt);
    \filldraw[] (0.5,0.75) circle (2pt);
    \filldraw[] (0.5,2.75) circle (2pt);
    \filldraw[] (1.5,1.75) circle (2pt);
    \filldraw[red] (0.5,2.25) circle (2pt);
    \end{tikzpicture}  
    \caption{The red dot denotes the dual variable $\alpha_3^{\vee}$ and the black dots denote the original variables $\alpha_1, \dots, \alpha_5$ ordered from left to right bottom to top. Expressing $\alpha_3^{\vee}$ as a linear combination of $\alpha_1, \dots, \alpha_5$ can be done by flowing from the red dot to the black dots.}
    \label{fig:example of flow}
\end{figure}

Recall that one of the rules governing the flowing process was that we flow until all flowlines have reached the black dots. This was perhaps slightly imprecise, because we have neglected to consider the possibility that some flowline enters a loop and never reaches a black dot. This can and indeed does happen in general. To rigorously resolve the issue, we must instead consider the limit of the flow as $t \to \infty$.
\begin{lemma}\label{lemma:flowing}
The flow from $\alpha_i^\vee$ to $\alpha_1, \dots, \alpha_{n-1+c}$ converges as $t \to \infty$. If $\alpha_i^\vee = \sum_{j=1}^{n-1+c}c_j\alpha_j$ for some unique $c_1, \dots, c_{n-1+c} \in \R$, then $|c_j| \leq 1$ for all $j$.
\end{lemma}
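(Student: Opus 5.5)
We would prove Lemma \ref{lemma:flowing} by induction on the number of crossings $c$ of $\beta$, replacing the dynamical picture of the flow by the algebraic recursion that produced the dual variables in the first place. Write $\beta = \beta_1 \sigma_j^{\pm 1}$, with $\sigma_j^{\pm1}$ the topmost crossing. By Proposition \ref{prop:braid rings}, $Q_\beta \cong Q_{\beta_1}\otimes Q_{\sigma_j^{\pm1}}$, where the bottom variables of the last crossing are identified with the top variables $\alpha_1',\dots,\alpha_{n-1}'$ of $\beta_1$. Every dual variable of $\beta$ is then of one of three kinds:
\begin{enumerate}
    \item a dual variable of $\beta_1$ for a region not touching the top of $\beta_1$;
    \item the new top region variable $\alpha_{n-1+c}$;
    \item one of $\alpha_k'$ for $\beta$, which by the defining formulas equals $\alpha_k'(\beta_1)$, or $\alpha_{n-1+c}$, or $\alpha_{j\pm1}'(\beta_1)+\tfrac12(\alpha_j'(\beta_1)-\alpha_{n-1+c})$.
\end{enumerate}
The key observation we would establish first is that each flowing rule, namely splitting into $\pm\frac{c}{2}$ when an adjacent strand enters a crossing, is exactly the substitution $\alpha_{j\pm1}' = \alpha_{j\pm1}+\frac12(\alpha_j-\alpha_n)$. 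Hence a finite-time snapshot of the flow is a partial expansion of $\alpha_i^\vee$ via these substitutions. The limit as $t\to\infty$ is therefore the fully expanded linear combination, provided the substitution process terminates or converges.

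To handle convergence, we would order the crossings by height. A flowline only ever moves downward, or moves upward into a region whose bottom lies strictly lower than the crossing that spawned it. The upward-turning branch at a crossing $\sigma_j^{\pm1}$ enters region $n$ of that crossing, but it enters at its bottom, where the black dot sits, so it terminates immediately. The worry about loops thus reduces to showing that each flowline passes each crossing height at most a bounded number of times. We would set up the induction so that the expansion of $\alpha_i^\vee$ is expressed through finitely many top variables of lower subbraids. This is a finite recursion, which should give termination outright and hence trivial convergence. If loops genuinely occur, the fallback is to show that flow rates decay geometrically by factors $\frac12$ per split, which yields absolute convergence.

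The bound $|c_j|\le 1$ is the main obstacle, because a sink can be reached along several flowlines (as with $c_3 = 1-\frac14$ in the example), so the naive triangle inequality fails. The sum of absolute rates is not conserved: a split sends $c \mapsto c,\ \tfrac{c}{2},\ -\tfrac{c}{2}$. Our plan is to prove a stronger inductive statement. Expanding each top variable $\alpha_k'$ of a braid in the region basis, the coefficients split into a positive part and a negative part, and for each sink the sum of positive contributions and the sum of negative contributions each have absolute value at most $1$. We would then check that one substitution step $\alpha_{j\pm1}'\mapsto \alpha_{j\pm1}'+\tfrac12\alpha_j'-\tfrac12\alpha_{n}$ preserves this. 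We expect this to use the fact that the $\tfrac12\alpha_j'$ and $-\tfrac12\alpha_n$ contributions land on sinks in disjoint regions of the diagram, except in patterns like the example, where opposite signs cancel. If this refined invariant turns out to be false, we would instead try a convexity argument: show that the coefficient vector of each $\alpha_k'$ lies in the convex hull of $\pm$ standard basis vectors. Since each substitution is an average of the form $\tfrac12(\,\cdot\,)+\tfrac12(\,\cdot\,)$ of such vectors, the bound $|c_j|\le 1$ would follow.
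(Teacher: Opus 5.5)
Your convergence argument is fine. As you note, a branch that turns upward enters the region above the crossing at its bottom, where that region's black dot sits. So every surviving flowline moves strictly downward, and the flow stops after finitely many splits. Your geometric-decay fallback would not have worked: a split produces branches of absolute rates $|c|,\frac{|c|}{2},\frac{|c|}{2}$, so the total absolute rate grows. It is not needed, though.

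\textbf{The gap is in the bound $|c_j|\le 1$.} As you state it, your strengthened invariant is per top variable: for each $\alpha_k'$ and each sink, the positive and the negative contributions are each at most $1$ in absolute value. This is not preserved by $\alpha_{j\pm1}'\mapsto\alpha_{j\pm1}'+\frac12\alpha_j'-\frac12\alpha_{\mathrm{new}}$. At an old sink, the new positive part is the positive part coming from $\alpha_{j\pm1}'$ plus half of the one coming from $\alpha_j'$, and your hypothesis only bounds this by $\frac32$. The disjointness you invoke, between the $\frac12\alpha_j'$ and $-\frac12\alpha_{\mathrm{new}}$ terms, does not address this. The real danger is that $\alpha_{j\pm1}'$ and $\frac12\alpha_j'$ hit the same sink with the same sign, and this does happen. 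For $\beta_1=\sigma_2\sigma_3\in\Br_4$ one computes $\alpha_1'=\alpha_1+\frac12\alpha_2-\frac12\alpha_4$ and $\alpha_2'=\frac14\alpha_2+\frac12\alpha_3+\frac34\alpha_4-\frac12\alpha_5$. After adding $\sigma_1$ on top, the sink $\alpha_2$ receives $+\frac14$ from each. Your convexity fallback is false outright. Already for a single crossing, $\alpha_{i+1}'=\alpha_{i+1}+\frac12\alpha_i-\frac12\alpha_n$ has $\ell^1$-norm $2$. The substitution has weights $1,\frac12,-\frac12$, so it is not an average.

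What is missing is a bound on \emph{aggregated} contributions rather than on each top variable separately. The paper fixes the source and bounds the one-signed total flow at every location above or below a black dot. Above a dot, all flow is downward, and the flow entering a region through its top crossing is half of the flow in each of the two neighbouring regions there. Hence it is at most $1$ by induction (on time in the paper, equivalently on height). The coefficient at a sink then combines a downward total in $[0,1]$ with an upward total in $[-1,0]$.

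If you want to keep your top-down algebraic recursion, the invariant that closes is column-wise. For each region $r$, the positive coefficients of $\alpha_r$, summed over all of $\alpha_1',\dots,\alpha_{n-1}'$, total at most $1$, and likewise the negative ones. This holds at the base case, where each $\alpha_k'=\alpha_k$. Adding $\sigma_j^{\pm1}$ zeroes the old-column entries of $\alpha_j'$ and adds half of each to the corresponding entries of $\alpha_{j\pm1}'$. By $(a+b)^+\le a^++b^+$, these column sums cannot increase, and the new column has entries $1,-\frac12,-\frac12$. Since every dual variable is a top variable of the subbraid below its region's top, $|c_j|\le 1$ follows.
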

\begin{proof}
We start the flow from the red dot $\alpha_i^{\vee}$ at time $t=0$. In what follows, we prove that at all times $t \in [0, \infty)$ and all locations in $\beta$, the absolute total flow rate is $\leq 1$. We first observe that this is the case at $t=0$, since the flow rate near $\alpha_i^\vee$ is $1$ and the flow rate everywhere else is $0$. We also note that due to the way flow rates were defined in (2), all flowlines that flow downwards have a positive flow rate and all flowlines that flow upwards have a negative flow rate. Moreover, flowlines in all regions of the braid $\beta$ are flowing towards the black dot, \emph{i.e.} at every location in $\beta$, all flowlines are flowing in the same direction. This means that they all have a positive or they all have a negative sign. Stated differently, as the flowing process is ongoing, the absolute value of the total flow rate at every location of $\beta$ is increasing. Our next step is to show that it is also bounded above by $1$. Let $d$ be the vertical distance between two consecutive crossings and let all flowlines have the same constant speed that enables them to travel the vertical distance $d$ in time $\Delta t=1$. With this normalization convention, observe that the set of times when the flowlines are splitting can be identified with a subset $S \subset \frac{1}{2}+\N_0 \subset [0, \infty)$. The rest of the time, not much interesting is happening and it is therefore sufficient for us to discretize the time and analyze the flowline behavior only at $\N_0 \subset [0, \infty)$. We have already noted that the claim is true at $t=0$. In the rest of the argument, we perform an induction on $t$. Suppose that the absolute total flow rate at every location of $\beta$ at time $t=\tau$ is $\leq 1$. Let $t = \tau + 1$ and choose a location $\times$ in some region of $\beta$. Without loss of generality, let $\times$ be above the black dot in this region, for example as depicted in Figure \ref{fig:region <= 1 proof}.
\begin{figure}[t]
    \begin{subfigure}[b]{0.43\textwidth}
        \centering
    \begin{tikzpicture}[scale=1]
    \draw[knot] (1,0) to[out=90,in=-90] (0,1);
    \draw[knot] (0,0) to[out=90,in=-90] (1,1);
    \draw[knot] (0,3) to[out=90,in=-90] (1,4);
    \draw[knot] (1,3) to[out=90,in=-90] (0,4);
    
    \draw[very thick] (1,1) to (1,3);
    \draw[very thick] (0,1) to (0,3);

    \draw[] (0.5, 2) node[]{$\times$};
    \draw[] (0.5, 3) node[]{$\times_\tau$};

    \filldraw[] (0.5,0.75) circle (2pt);
    \end{tikzpicture}
        \caption{All flowlines that reach $\times$ at time $t=\tau+1$ must have already reached $\times_\tau$ at time $t=\tau$. This allows us to bound the total flow at $\times$ at time $t=\tau+1$.}
        \label{fig:region <= 1 proof a}
    \end{subfigure}
    \hspace{1cm}
    \begin{subfigure}[b]{0.43\textwidth}
    \centering
    \begin{tikzpicture}[scale=1]
    \draw[knot] (1,0) to[out=90,in=-90] (0,1);
    \draw[knot] (0,0) to[out=90,in=-90] (1,1);
    \draw[knot] (0,3) to[out=90,in=-90] (1,4);
    \draw[knot] (1,3) to[out=90,in=-90] (0,4);
    
    \draw[very thick] (1,1) to (1,3);
    \draw[very thick] (0,1) to (0,3);

    \draw[] (0.5, 3) node[]{$\times$};
    \draw[] (-0.5, 3.5) node[]{$\times_{\tau, 1}$};
    \draw[] (1.5, 3.5) node[]{$\times_{\tau, 2}$};

    \filldraw[] (0.5,0.75) circle (2pt);
    \end{tikzpicture}
        \caption{All flowlines that reach $\times$ at time $t=\tau+1$ must have already reached $\times_{\tau, 1}$ or $\times_{\tau, 2}$ at time $t=\tau$. This allows us to bound the total flow at $\times$ at time $t=\tau+1$.}
        \label{fig:region <= 1 proof b}
    \end{subfigure}
    \caption{} 
    \label{fig:region <= 1 proof}
\end{figure}

Since all flowlines at $\times$ are flowing downwards at a known speed, one can infer the state of the flow at time $\tau$. There are two possible qualitatively different states, depending on the proximity of $\times$ to the top of the region. If $\times$ is far from the top of the region, as in Figure \ref{fig:region <= 1 proof a}, then the total flow rate at $\times$ at time $\tau+1$ is the same as the total flow rate at $\times_\tau$ at time $\tau$. This is because every flowline that reaches $\times$ at time $\tau+1$ must have already reached $\times_\tau$ at time $\tau$. The inductive hypothesis states that the absolute total flow rate at $\times_\tau$ at time $\tau$ is $\leq 1$, which proves the inductive step. The other case is similar. If $\times$ is near the top of the region, as in Figure \ref{fig:region <= 1 proof b}, then all flowlines at $\times$ at time $\tau+1$ must have come from the locations $\times_{\tau, 1}$ and $\times_{\tau, 2}$ at time $\tau$. Both of their absolute flow rates are bounded by $1$ by the inductive hypothesis and crossing into a new region comes with a reduction of the flow rate by a factor of $2$. Therefore, the total flow rate at $\times$ at time $\tau+1$ is at most $\frac{1}{2} \cdot 1 + \frac{1}{2} \cdot 1 = 1$, as required.

\vspace{1em}

We have shown above that at all times $t \in [0, \infty)$ and all locations in $\beta$, the absolute total flow rate is $\leq 1$. We have also noted that the absolute total flow rate is increasing everywhere as time goes on. By completeness of $\R$, the process converges as we take the limit $t \to \infty$. The constants $c_j$ are the limits of total flow rates near the black dots, proving the lemma.
\end{proof}

There are several ways in which the flowing process can  be generalized slightly. Firstly, one can \emph{pause} the flow before all flowlines reach the black dots. In this case, one still obtains a valid expression of $\alpha_i^\vee$ as a linear combination of the variables associated to the endpoints of the flowlines. However, the endpoints of the flowlines are not necessarily linearly independent in this case, so the expression is not unique in general. In a similar vein, there is nothing special about the basis $\{\alpha_1, \dots, \alpha_{n-1+c}\}$ that corresponds to the black dots near the bottom of the regions. Other placements of black dots work just as well, provided that they form a basis of $\F\langle \alpha_1, \dots, \alpha_{n-1+c} \rangle$. 

\begin{lemma}\label{lemma:dual variables as linear combinations}
    Let $\{\alpha_1^\circ, \dots, \alpha_{n-1+c}^\circ\}$ be a basis of  $\F\langle \alpha_1, \dots, \alpha_{n-1+c} \rangle$ where $\alpha_j^\circ \in \{\alpha_j, \alpha_j^\vee\}$ for each $j$. Let $\alpha_i^\vee = \sum_{j=1}^{n-1+c}c_j\alpha_j^\circ$ for some unique $c_1, \dots, c_{n-1+c} \in \R$. Then $|c_j| \leq 1$ for all $j$.
\end{lemma}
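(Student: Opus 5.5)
The plan is to rerun the flowing argument of Lemma \ref{lemma:flowing}, now with sinks placed according to the chosen basis. For each region $j$ we put a black dot at the bottom of region $j$ if $\alpha_j^\circ = \alpha_j$, and at the top of region $j$ if $\alpha_j^\circ = \alpha_j^\vee$. The source is the red dot $\alpha_i^\vee$ at the top of region $i$. If region $i$ itself carries a sink at its top, we have $\alpha_i^\circ=\alpha_i^\vee$ and the claim is trivial, with $c_i=1$ and all other $c_j=0$. So assume otherwise.

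Rule (1) of the flowing process is modified only in direction. In each region we flow vertically towards that region's black dot, which is downwards when the dot is at the bottom and upwards when it is at the top. Rule (2) is unchanged: flow rates are halved, and their sign is determined by whether the flowline changes direction or not.

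The key observation is that the two local identities underlying the flow still hold. The first says that the bottom and top variables of a region are related by a single crossing. The second is the splitting relation at a crossing, which comes from the formulas $\alpha_i'=\alpha_n$ and $\alpha_{i\pm1}'=\alpha_{i\pm1}+\frac12(\alpha_i-\alpha_n)$. These identities can be read in either direction. Solving $\alpha_{i\pm 1}' = \alpha_{i\pm1} + \frac12(\alpha_i - \alpha_n)$ for $\alpha_{i\pm1}$ gives the upward version of the splitting rule with the same coefficients $\pm\frac12$. Consequently, at any pause time the total flow yields a valid linear expression for $\alpha_i^\vee$ in terms of the variables at the flowline endpoints. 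This is the remark after Lemma \ref{lemma:flowing} about pausing the flow and moving the black dots.

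Next I would repeat the induction on discretized time $t\in\N_0$ from the proof of Lemma \ref{lemma:flowing}. At every location all flowlines move in the same direction, namely towards that region's sink. Since sign is tied to direction, all flowlines at a location share a sign, so the absolute total rate at each location is monotone nondecreasing in $t$.

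For the bound $\leq 1$ there are two cases, as in Figure \ref{fig:region <= 1 proof}. Away from the end of a region opposite to its sink, the rate at $\times$ equals the rate one step earlier. Near that end, the rate is the sum of two contributions from neighbouring locations at time $\tau$, each halved, so it is at most $\frac12+\frac12=1$. Monotonicity and boundedness give convergence of the flow as $t\to\infty$. The limiting rates at the black dots give an expression $\alpha_i^\vee=\sum_j c_j\alpha_j^\circ$ with $|c_j|\le 1$. Uniqueness of the coefficients, guaranteed because $\{\alpha_j^\circ\}$ is a basis, identifies these limits with the $c_j$ in the statement.

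The main obstacle is justifying that the limit really is an identity in $\F\langle\alpha_1,\dots,\alpha_{n-1+c}\rangle$. We need the mass still in transit to contribute nothing in the limit, and this is where the basis hypothesis must enter. The situation differs from the all-bottom case because flowlines can now circulate, going up into one region and down into another. My first attempt would be to view the flow as iterating a linear operator $A$ on the finite set of crossing locations, with entries $\pm\frac12$. The expression at time $t$ then has the form $\sum_j c_j(t)\alpha_j^\circ + v_t$, where $v_t$ is the in-transit part and equals $A^t$ applied to the initial vector. Bounded monotone rates force the entries of $|A^t|$ to converge. I would then argue that if $v_t$ did not tend to zero, the limiting in-transit part would be a nontrivial invariant configuration. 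Such a configuration corresponds to a linear relation among the $\alpha_j^\circ$, which contradicts that they form a basis. Pinning down this spectral argument carefully is the step I expect to need the most work.
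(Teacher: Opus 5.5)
Your plan is the paper's own argument: its proof simply says the case $\alpha_i^\circ=\alpha_i$ is identical to Lemma~\ref{lemma:flowing}. The step that breaks, however, is the bound, not the in-transit limit you flagged.

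Your induction says that at the entry end of a region the cumulative rate is at most $\frac12+\frac12=1$. This ignores the source. In region $i$, the cumulative rate at the top is $1$ \emph{plus} the halved side inflows at the crossing that closes region $i$, and those inflows always point into the region, so they have the same sign as the source. With all sinks at the bottom, no flow ever reaches that crossing, because everything moves downwards. Once some sinks sit at the tops of their regions, flow moves upwards through those regions, can re-enter region $i$ from above, and pushes its through-flow above $1$.

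In fact the statement is false as written. Take $\beta=\sigma_1\sigma_2\in\Br_3$, with regions $1,2$ at the bottom, $3$ above $\sigma_1$, and $4$ above $\sigma_2$. Then $\alpha_2^\vee=\frac12\alpha_1+\alpha_2-\frac12\alpha_3$ and $\alpha_3^\vee=\frac14\alpha_1+\frac12\alpha_2+\frac34\alpha_3-\frac12\alpha_4$. So $\{\alpha_1,\alpha_2,\alpha_3^\vee,\alpha_4\}$ is an admissible basis, and eliminating $\alpha_3$ gives
\[
\alpha_2^\vee=\tfrac23\alpha_1+\tfrac43\alpha_2-\tfrac23\alpha_3^\vee-\tfrac13\alpha_4 ,
\]
so $c_2=\frac43>1$. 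In flow language: the rate-$1$ flow down region $2$ sends $-\frac12$ up region $3$, whose sink is now on top. At $\sigma_2$ that flow returns $+\frac14$ into the top of region $2$. The through-flow of region $2$ is therefore $1+\frac14+\frac1{16}+\cdots=\frac43$.

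What can be proved needs an extra hypothesis ensuring that nothing re-enters region $i$, for example that region $i$ is open at the top of $\beta$. This is exactly the situation in the Claim inside the proof of Lemma~\ref{lemma:linear independence}, where region $k$ has no crossing at either end. Under that hypothesis, the through-flow of region $i$ is exactly $1$, and your induction does show that every cumulative rate has absolute value at most $1$. Each $c_j$ is then a through-flow term plus an entry term of opposite signs, each of absolute value at most $1$, so $|c_j|\le 1$.

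The obstacle you singled out disappears once the bound is available. Cumulative rates that are monotone and bounded at every location converge. The mass in transit at time $t$ is controlled by the increments of these rates over $[t,t+1]$, which tend to zero, so the limiting expression is an honest identity. The basis hypothesis is needed only for uniqueness of the $c_j$, and no spectral argument about $A^t$ is required. The missing ingredient is a hypothesis, together with a check, that flow cannot return to the source region.
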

\begin{proof}
If $\alpha_i^{\circ} = \alpha_i^\vee$, then $c_i = 1$ and $c_j=0$ for all $j \neq i$, as required. The interesting case is when $\alpha_i^{\circ} = \alpha_i$. Its proof is identical to the proof of Lemma \ref{lemma:flowing}
\end{proof}

\subsection{Double chain complex \texorpdfstring{$C_{\Ras}(\beta)$}{CQbeta}}
Consider a single negative crossing $\sigma_i^{-1}$ and recall that $Q_{\sigma_i^{-1}} = \F[\alpha_1, \dots, \alpha_{n-1}, \alpha_n]$. Let $C_{\Ras}(\sigma_i^{-1})$ be the double chain complex
$$
\begin{tikzcd}
    Q_{\sigma_i^{-1}} \arrow[r, "\alpha_i^2-\alpha_i'^2"] & Q_{\sigma_i^{-1}}\\
    Q_{\sigma_i^{-1}} \arrow[u, "1"] \arrow[r, "\alpha_i-\alpha_i'"] & Q_{\sigma_i^{-1}} \arrow[u, "\alpha_i+\alpha_i'", swap]
\end{tikzcd}
$$
over $Q_{\sigma_i^{-1}}$. Let $d_+$ denote the horizontal differential and $d$ the vertical differential. We adopt similar conventions for positive crossings and define $C_{\Ras}(\sigma_i)$ to be the double chain complex
$$
\begin{tikzcd}
    Q_{\sigma_i} \arrow[r, "\alpha_i-\alpha_i'"] & Q_{\sigma_i}\\
    Q_{\sigma_i} \arrow[u, "\alpha_i+\alpha_i'"] \arrow[r, "\alpha_i^2-\alpha_i'^2"] & Q_{\sigma_i} \arrow[u, "1", swap]
\end{tikzcd}
$$
over $Q_{\sigma_i}$. Having defined the invariant for crossings, we now inductively extend it to all braids. Let $\beta_1, \beta_2 \in \Br_n$ be two braids with double chain complexes $C_{\Ras}(\beta_1)$ and $C_{\Ras}(\beta_2)$ over rings $Q_{\beta_1}$ and $Q_{\beta_2}$ respectively.
\begin{definition}\label{def:tensor product of Rasmussen's complexes}
$C_{\Ras}(\beta_1\beta_2) = (C_{\Ras}(\beta_1) \otimes_{Q_{\beta_1}} Q_{\beta_1\beta_2}) \otimes_{Q_{\beta_1\beta_2}} (C_{\Ras}(\beta_2) \otimes_{Q_{\beta_2}} Q_{\beta_1\beta_2}).$
\end{definition}
Note that
\begin{align*}
    C_{\Ras}(\beta_1\beta_2) &= (C_{\Ras}(\beta_1) \otimes_{Q_{\beta_1}} Q_{\beta_1\beta_2}) \otimes_{Q_{\beta_1\beta_2}} (C_{\Ras}(\beta_2) \otimes_{Q_{\beta_2}} Q_{\beta_1\beta_2})\\
    &\cong C_{\Ras}(\beta_1) \otimes_{Q_{\beta_1}} Q_{\beta_1\beta_2} \otimes_{Q_{\beta_2}} C_{\Ras}(\beta_2)\\
    &\cong C_{\Ras}(\beta_1) \otimes_{Q_{\beta_1}} Q_{\beta_1} \otimes_{\F[\gamma_1, \dots, \gamma_{n-1}]} Q_{\beta_2} \otimes_{Q_{\beta_2}} C_{\Ras}(\beta_2)\\
    &\cong C_{\Ras}(\beta_1) \otimes_{\F[\gamma_1, \dots, \gamma_{n-1}]} C_{\Ras}(\beta_2)
\end{align*}
where the last line closely resembles the tensor product of Rouquier complexes. However, the advantage of having defined $C_{\Ras}(\beta_1\beta_2)$ as we did in Definition \ref{def:tensor product of Rasmussen's complexes} is that it exhibits $C_{\Ras}(\beta_1\beta_2)$ as a chain complex over $Q_{\beta_1\beta_2}$. This allows us to extend the definition of the Rasmussen's double chain complex to an arbitrary braid $\beta$. We also observe that $C_{\Ras}(\beta)$ is a free $Q_\beta$-module.
\begin{lemma}\label{lemma:C free as Q-module}
$C_{\Ras}(\beta)$ is free as a $Q_\beta$-module.
\end{lemma}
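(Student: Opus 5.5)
The plan is to induct on the number $c$ of crossings in $\beta$, following the inductive definition of $C_{\Ras}(\beta)$. Each term of $C_{\Ras}(\beta)$, in each bigrading, will turn out to be a finite direct sum of copies of $Q_\beta$, up to grading shifts. Showing this is enough, because such a sum is free.

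For the base case, take $\beta$ to be a single crossing $\sigma_i^{\pm 1}$. By definition, $C_{\Ras}(\sigma_i^{\pm 1})$ is a square of four copies of $Q_{\sigma_i^{\pm 1}}$, with differentials given by multiplication by elements of $Q_{\sigma_i^{\pm 1}}$. Hence it is free of rank $4$ as a $Q_{\sigma_i^{\pm 1}}$-module. For the trivial braid with no crossings, the complex is $Q_\beta = \F[\alpha_1,\dots,\alpha_{n-1}]$ itself, which is free of rank one.

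For the inductive step, write $\beta = \beta_1\beta_2$ with both factors having fewer crossings. By induction, $C_{\Ras}(\beta_1)\cong \bigoplus Q_{\beta_1}$ and $C_{\Ras}(\beta_2)\cong\bigoplus Q_{\beta_2}$ as modules, ignoring differentials. Base change preserves freeness, so $C_{\Ras}(\beta_k)\otimes_{Q_{\beta_k}} Q_{\beta_1\beta_2}\cong \bigoplus Q_{\beta_1\beta_2}$ for $k=1,2$. Here Proposition \ref{prop:braid rings} supplies the ring maps $Q_{\beta_k}\to Q_{\beta_1\beta_2}$, as the two inclusions into $Q_{\beta_1}\otimes_{\F[\gamma_1,\dots,\gamma_{n-1}]}Q_{\beta_2}$. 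Definition \ref{def:tensor product of Rasmussen's complexes} then gives $C_{\Ras}(\beta_1\beta_2)$ as the tensor product over $Q_{\beta_1\beta_2}$ of two free $Q_{\beta_1\beta_2}$-modules. Since tensor products distribute over direct sums and $Q\otimes_Q Q\cong Q$, this is again free, of rank equal to the product of the ranks. Iterating shows that $C_{\Ras}(\beta)$ is free of rank $4^c$.

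The only point needing care is that the ring maps $Q_{\beta_k}\to Q_{\beta_1\beta_2}$ used for base change are well defined. This is exactly the content of Proposition \ref{prop:braid rings}, which identifies $Q_{\beta_1\beta_2}$ with $Q_{\beta_1}\otimes_{\F[\gamma]}Q_{\beta_2}$ as a polynomial ring on the regions of $\beta_1\beta_2$. Beyond that, the argument is bookkeeping; the differentials play no role, since freeness is a statement about the underlying module. So I expect no real obstacle.
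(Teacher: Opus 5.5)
Your proof is correct and takes essentially the same route as the paper. Both argue by induction: the single-crossing case is read off from the definition, and the inductive step uses that base change along $Q_{\beta_k}\to Q_{\beta_1\beta_2}$ preserves freeness and that a tensor product over $Q_{\beta_1\beta_2}$ of free modules is free. Your extra bookkeeping (the crossingless braid and the rank $4^c$) is a harmless refinement.
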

\begin{proof}
Indeed, this is clear when $\beta = \sigma_i^{\pm 1}$ is a single crossing from the definition of $C_{\Ras}(\sigma_i^{\pm 1})$. More generally, assume that $C_{\Ras}(\beta_1)$ is a free $Q_{\beta_1}$-module and $C_{\Ras}(\beta_2)$ is a free $Q_{\beta_2}$-module. Then $C_{\Ras}(\beta_1) \otimes_{Q_{\beta_1}} Q_{\beta_1\beta_2}$ and $C_{\Ras}(\beta_2) \otimes_{Q_{\beta_2}} Q_{\beta_1\beta_2}$ are free as $Q_{\beta_1\beta_2}$-modules and hence so is their tensor product $C_{\Ras}(\beta_1\beta_2)$.
\end{proof}
\subsection{Equivalence of constructions}
The chain complex $C_{\Ras}(\beta)$ may be thought of as the counterpart of the Rouquier complex $C_\Rou(\beta)$. A rigorous reinterpretation of this claim leads us to the statement of Theorem \ref{thm:equivalence Ras and Kho}. Several caveats are worth noting.
\begin{enumerate}
    \item Recall that the Rasmussen's chain complex $C_{\Ras}(\beta)$ was defined as a chain complex over $Q_\beta$. However, it can also be interpreted as a chain complex of modules over $\F[\alpha_1, \dots, \alpha_{n-1}, \alpha_1', \dots, \alpha_{n-1}']$, where $\alpha_1, \dots, \alpha_{n-1}$ represent the bottom action and $\alpha_1', \dots, \alpha_{n-1}'$ represent the top action.
    \item Under this interpretation, the chain complexes $C_{\Ras}(\beta)$ and $C_\Rou(\beta)$ are generally \emph{not} chain homotopy equivalent, but are nonetheless isomorphic in the derived category. Taking Hochschild homology of a Rouquier complex involves passing to the derived category through the choice of a free resolution, so the calculation of $\widehat{H}(\overline{\beta})$ with either of $C_\Rou(\beta)$ or $C_{\Ras}(\beta)$ gives the same result.
    \item The complexes are not only derived equivalent. The complex $C_{\Ras}(\beta)$ is free as a chain complex over $\F[\alpha_1, \dots, \alpha_{n-1}, \alpha_1', \dots, \alpha_{n-1}']$ and forms a free resolution of $C_\Rou(\beta)$.
\end{enumerate}
To prove Theorem \ref{thm:equivalence Ras and Kho}, we first remove the assumption that $\beta$ closes to a knot and weaken the conclusion to establish a closely related claim.
\begin{lemma}\label{lemma:isomorphism in derived category}
Let $\beta \in \Br_n$ be a braid. Then $C_{\Ras}(\beta) \cong C_\Rou(\beta)$ in the derived category $D(\F[\alpha_1, \dots, \alpha_{n-1}, \alpha_1', \dots, \alpha_{n-1}'])$.
\end{lemma}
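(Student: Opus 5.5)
The approach is to compare the two constructions one crossing at a time, and then use compatibility with tensor products to pass to an arbitrary braid $\beta$.

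\textbf{Single crossings.} Let $S=\F[\alpha_1,\dots,\alpha_{n-1},\alpha_1',\dots,\alpha_{n-1}']$. Consider $\sigma_i^{-1}$, and view $Q_{\sigma_i^{-1}}=\F[\alpha_1,\dots,\alpha_n]$ as an $S$-module via the bottom and top actions. Since $\alpha_i'=\alpha_n$, $\alpha_{i\pm1}'=\alpha_{i\pm1}+\frac12(\alpha_i-\alpha_n)$ and $\alpha_j'=\alpha_j$ otherwise, we get
$$Q_{\sigma_i^{-1}}\cong S/(\alpha_j-\alpha_j' \ (|i-j|>1),\ (\alpha_{i\pm1}+\tfrac12\alpha_i)-(\alpha_{i\pm1}'+\tfrac12\alpha_i')).$$
This is a polynomial ring in which $\alpha_i$ and $\alpha_i'$ are independent variables. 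Hence $\B_i\cong q^{-1}Q/(\alpha_i^2-\alpha_i'^2)$ and $R\cong Q/(\alpha_i-\alpha_i')$, and both $\alpha_i-\alpha_i'$ and $\alpha_i^2-\alpha_i'^2$ are nonzerodivisors on $Q$.

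The rows of the Rasmussen square (the $d_+$ direction) are therefore the two-term Koszul resolutions $Q\xrightarrow{\alpha_i^2-\alpha_i'^2}Q\to \B_i$ and $Q\xrightarrow{\alpha_i-\alpha_i'}Q\to R$. Up to the grading shifts, which I will fix to match, the vertical maps $1$ and $\alpha_i+\alpha_i'$ lift the Rouquier differential $qR\xrightarrow{\alpha_i+\alpha_i'}\B_i$. Indeed, $(\alpha_i+\alpha_i')(\alpha_i-\alpha_i')=(\alpha_i^2-\alpha_i'^2)\cdot 1$, so the square commutes.

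Treat $C_{\Ras}(\sigma_i^{-1})$ as a bicomplex and take the total complex. Taking $d_+$-homology first, the rows are exact except in the end column, where the homology is $\B_i$ and $R$. The spectral sequence of the filtration by the $d$-degree then shows that the natural augmentation $\mathrm{Tot}(C_{\Ras}(\sigma_i^{-1}))\to C_\Rou(\sigma_i^{-1})$ is a quasi-isomorphism. Concretely, its cone is the total complex of a bicomplex with exact rows and two columns, hence acyclic. The positive crossing $\sigma_i$ is handled the same way.

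\textbf{Tensor products.} For $\beta=\prod\sigma_{i_k}^{a_k}$, the paper's computation gives $C_{\Ras}(\beta_1\beta_2)\cong C_{\Ras}(\beta_1)\otimes_{\F[\gamma]}C_{\Ras}(\beta_2)$. The Rouquier side is $C_\Rou(\beta_1)\otimes_R C_\Rou(\beta_2)$, where the tensor product is over the middle copy of $R$; this is the paper's $\otimes_{R\otimes R}$ in bimodule language. I would proceed by induction on the number of crossings, using the quasi-isomorphisms $C_{\Ras}(\beta_k)\to C_\Rou(\beta_k)$ and factoring through $C_{\Ras}(\beta_1)\otimes C_\Rou(\beta_2)$. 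The required flatness facts are:
\begin{enumerate}
\item[(a)] $C_{\Ras}(\beta)$ is free over $Q_\beta$ by Lemma \ref{lemma:C free as Q-module}, and $Q_\beta$ is a polynomial ring that is free over the bottom variables $\F[\alpha_1,\dots,\alpha_{n-1}]$ and also over the top variables. Freeness over the top variables holds because the top variables extend to a linear basis (a triangular change of variables crossing by crossing).
\item[(b)] Soergel bimodules are free as one-sided $R$-modules.
\end{enumerate}
Given (a) and (b), tensoring over $\F[\gamma]=R$ with a one-sided-free complex preserves quasi-isomorphisms of bounded complexes. Hence
$$C_{\Ras}(\beta_1)\otimes C_{\Ras}(\beta_2)\to C_{\Ras}(\beta_1)\otimes C_\Rou(\beta_2)\to C_\Rou(\beta_1)\otimes C_\Rou(\beta_2)$$
are both quasi-isomorphisms. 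I also need to check that the left factor $C_{\Ras}(\beta_1)\otimes_{\F[\gamma]}C_\Rou(\beta_2)$ is bounded and each term is flat over the middle ring. These maps are $S$-linear for the outer bottom and top actions, so they give the required isomorphism in $D(S)$.

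\textbf{Main obstacle.} The hardest step will be verifying (a). One must show that $Q_\beta$ is free over the top action $\F[\alpha_1',\dots,\alpha_{n-1}']$, i.e.\ that $\alpha_1',\dots,\alpha_{n-1}'$ are linearly independent and extend to a basis of the linear forms. I would first try induction via Proposition \ref{prop:braid rings}. For a single crossing, $\alpha_1',\dots,\alpha_{n-1}'$ together with $\alpha_i$ form a basis, since the change of variables is unitriangular after reordering. Composing such changes preserves the property. A secondary point to check is that the grading and sign conventions of the total complex match $\gr_R$ and $\gr_q$.
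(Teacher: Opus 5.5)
Your proposal is correct and follows essentially the same route as the paper. In both, the rows of each crossing's Rasmussen square are resolutions of $\B_i$ and $R$, so $C_{\Ras}(\sigma_i^{\pm 1})$ resolves $C_{\Rou}(\sigma_i^{\pm 1})$, and one-sided freeness then makes ordinary tensor products compute derived ones.

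Your version is more explicit than the paper's, which leaves the one-sided freeness of Soergel bimodules implicit. Note also that the top-freeness of $Q_\beta$ you flag as the main obstacle, while true, is not actually needed. The cone of $C_{\Ras}(\beta_2)\to C_{\Rou}(\beta_2)$ is a bounded acyclic complex of modules free over the bottom variables, so it stays acyclic after tensoring with $C_{\Ras}(\beta_1)$.
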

\begin{proof}
We consider the case when $\beta = \sigma_i^{-1}$ is a single negative crossing. Recall that $C_{\Ras}(\sigma_i^{-1})$ is a double chain complex over $Q_{\sigma_i^{-1}} = \F[\alpha_1, \dots, \alpha_{n-1}, \alpha_n]$ where $\alpha_n = \alpha_i'$. As $\F[\alpha_1, \dots, \alpha_{n-1}, \alpha_1', \dots, \alpha_{n-1}']$-modules, we have
$$Q_{\sigma_i^{-1}} \cong \frac{\F[\alpha_1, \dots, \alpha_{n-1}, \alpha_1', \dots, \alpha_{n-1}']}{(\alpha_j' - \alpha_j \text{ for all $j$ with $|i-j| > 1$, } \alpha_{i \pm 1}' - \alpha_{i \pm 1} + \frac{1}{2}(\alpha_i'-\alpha_i))}$$
so $C_{\Ras}(\sigma_i^{-1})$ can be interpreted as a module over $\F[\alpha_1, \dots, \alpha_{n-1}, \alpha_1', \dots, \alpha_{n-1}']$. Recall also that
$$\B_i \cong q^{-1} \frac{\F[\alpha_1, \dots, \alpha_{n-1}, \alpha_1', \dots, \alpha_{n-1}']}{(\alpha_j-\alpha_j' \text{ for $j$ with $|i-j|>1$, } (\alpha_{i\pm 1}+\frac{1}{2}\alpha_i) - (\alpha_{i \pm 1}'+\frac{1}{2}\alpha_i'),  \alpha_i^2-\alpha_i'^2  )}$$
and so the quotient map $Q_{\sigma_i^{-1}} \to \B_i$ gives us a resolution
$$(Q_{\sigma_i^{-1}} \xrightarrow{\alpha_i^2-\alpha_i'^2} Q_{\sigma_i^{-1}}) \to \B_i$$
of $\B_i$ over $\F[\alpha_1, \dots, \alpha_{n-1}, \alpha_1', \dots, \alpha_{n-1}']$. Note that this resolution is the top row of the Rasmussen's complex $C_{\Ras}(\sigma_i^{-1})$. Similar reasoning provides us with a resolution
$$(Q_{\sigma_i^{-1}} \xrightarrow{\alpha_i-\alpha_i'} Q_{\sigma_i^{-1}}) \to R$$
of $R$ over  $\F[\alpha_1, \dots, \alpha_{n-1}, \alpha_1', \dots, \alpha_{n-1}']$, which is the bottom row of the Rasmussen's complex $C_{\Ras}(\sigma_i^{-1})$. Moreover, the Rasmussen's complex $C_{\Ras}(\sigma_i^{-1})$ is a resolution of the Rouquier complex $C_\Rou(\sigma_i^{-1})$ over $\F[\alpha_1, \dots, \alpha_{n-1}, \alpha_1', \dots, \alpha_{n-1}']$ and hence the two are isomorphic in the derived category.

\vspace{1em}

A proof that $C_{\Ras}(\sigma_i) \cong C_\Rou(\sigma_i)$ is similar. Extrapolating from a single crossing to an entire braid requires the following observation. The complex $C_{\Ras}(\sigma_i^{\pm 1})$ is free both as a $\F[\alpha_1, \dots, \alpha_{n-1}]$-module and as a $\F[\alpha_1', \dots, \alpha_{n-1}']$-module. Tensor product and derived tensor product of such modules are isomorphic in the derived category $D(\F[\alpha_1, \dots, \alpha_{n-1}, \alpha_1', \dots, \alpha_{n-1}'])$, establishing the claim.
\end{proof}
\begin{remark}
For $n \geq 3$, $Q_{\sigma_i}$ is not free as a $\F[\alpha_1, \dots, \alpha_{n-1}, \alpha_1', \dots, \alpha_{n-1}']$-module, so $C_{\Ras}(\sigma_i)$ is not a \emph{free} resolution of $C_\Rou(\sigma_i)$. However, this does not contradict the statement of Theorem \ref{thm:equivalence Ras and Kho} since for $n \geq 3$, the closure of $\sigma_i$ is not a knot. Indeed, for $n=2$ when $\overline{\sigma_1}$ is the unknot, we have $Q_{\sigma_1} = \F[\alpha_1, \alpha_1']$ and $C_{\Ras}(\sigma_i)$ is a free resolution of $C_\Rou(\sigma_i)$.
\end{remark}

\begin{lemma}\label{lemma:linear independence}
Let $\beta\in\Br_n$ be a braid with $c$ crossings. Then $\{\alpha_1, \dots, \alpha_{n-1}, \alpha_n^\vee, \dots, \alpha_{n-1+c}^{\vee}\}$ is a basis of the vector space $\F\langle \alpha_1, \dots, \alpha_{n-1+c}\rangle$.
\end{lemma}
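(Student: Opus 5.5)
Since the set $\{\alpha_1, \dots, \alpha_{n-1}, \alpha_n^\vee, \dots, \alpha_{n-1+c}^\vee\}$ has exactly $n-1+c = \dim_\F \F\langle \alpha_1, \dots, \alpha_{n-1+c}\rangle$ elements, it suffices to show that it spans. I will argue by induction on the number of crossings $c$, using the factorization of Proposition \ref{prop:braid rings}. For $c=0$ there is nothing to prove. For $c \geq 1$, write $\beta = \beta_1 \sigma_j^{\pm 1}$, where $\sigma_j^{\pm 1}$ is the topmost crossing and $\beta_1$ has $c-1$ crossings. By Proposition \ref{prop:braid rings}, the linear span of the region variables of $\beta$ is spanned by those of $\beta_1$ (regions $1, \dots, n-2+c$) together with the single new region $n-1+c$ created at the top of the crossing $\sigma_j^{\pm1}$.

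The key step is to check that dual variables are compatible with this decomposition. For every closed region $i$ of $\beta_1$, its top lies at a crossing of $\beta_1$, so by definition $\alpha_i^\vee$ is the action of the corresponding $\alpha$ on $Q_{\sigma}$ of that crossing, transported into $Q_{\beta}$ via Proposition \ref{prop:braid rings}. This is the same element whether computed in $\beta_1$ or in $\beta$. The open top regions of $\beta_1$ require more care. An open top region $i$ lying between strands $k$ and $k+1$ has $\alpha_i^\vee = \alpha_k'$ (top action of $\beta_1$). This equals the bottom variable $\alpha_k$ of the factor $Q_{\sigma_j^{\pm1}}$. In $\beta$, that region is still open if $|k-j|>1$, with $\alpha_k' = \alpha_k$ unchanged. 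If instead $k \in \{j, j\pm 1\}$, the region becomes closed in $\beta$ with its top at the crossing $\sigma_j^{\pm1}$, and its dual variable in $\beta$ is again, by definition, the action of $\alpha_k$ from $Q_{\sigma_j^{\pm1}}$, which is the same element. So the set $\{\alpha_n^\vee, \dots, \alpha_{n-2+c}^\vee\}$ computed in $\beta_1$ coincides with the one computed in $\beta$. By the inductive hypothesis, together with $\alpha_1, \dots, \alpha_{n-1}$, it spans the span of $\alpha_1, \dots, \alpha_{n-2+c}$.

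It remains to produce $\alpha_{n-1+c}$. The new region is open at the top, between strands $j$ and $j+1$. Its variable, viewed in $Q_{\sigma_j^{\pm1}}$, is $\alpha_n$, and the top action there gives $\alpha_j' = \alpha_n$. So $\alpha_{n-1+c}^\vee = \alpha_{n-1+c}$, and the spanning set contains it. Hence the $n-1+c$ elements span, and they form a basis.

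The main obstacle is the bookkeeping in the middle step: making the identification between the labelling of regions of $\beta_1$ and of $\beta$ precise, and checking that the two cases of the definition of $\alpha_i^\vee$ (open versus closed) glue consistently when a region changes from open to closed. I would handle this by working entirely inside $Q_\beta \cong Q_{\beta_1}\otimes Q_{\sigma_j^{\pm1}}$ and comparing elements directly, rather than via the flow picture.
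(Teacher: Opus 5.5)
There is a genuine gap, and it is in the step you treat as bookkeeping: the open top regions of $\beta_1$ in columns $k=j\pm1$. Placing $\sigma_j^{\pm1}$ on top of $\beta_1$ closes off only the top region of $\beta_1$ in column $j$. The top regions in columns $j\pm1$ run alongside the new crossing and stay open at the top of $\beta$. A region count confirms this: $\beta$ has only one region more than $\beta_1$, and each of the $n-1$ columns must still contain a region open at the top.

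For such a region $r$, the dual variable in $\beta$ is therefore the top action $\alpha_{j\pm1}'=\alpha_{j\pm1}+\frac12(\alpha_j-\alpha_n)$ of $Q_{\sigma_j^{\pm1}}$, not its bottom variable $\alpha_{j\pm1}$. In $Q_\beta$ this reads
\[
\alpha_r^{\vee,\beta}=\alpha_r^{\vee,\beta_1}+\tfrac12\bigl(\alpha_s^{\vee}-\alpha_{n-1+c}\bigr),
\]
where $s$ is the top region of $\beta_1$ in column $j$. So the set $\{\alpha_n^\vee,\dots,\alpha_{n-2+c}^\vee\}$ does change when the crossing is added.

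Concretely, take $\beta_1=\sigma_1$, $\beta=\sigma_1\sigma_2\in\Br_3$, with the region labels $\alpha_1,\alpha_2,\alpha_3,\alpha_6$ of the paper's example. The region above $\sigma_1$ has dual variable $\alpha_3$ in $\beta_1$, but $\frac14\alpha_1+\frac12\alpha_2+\frac34\alpha_3-\frac12\alpha_6$ in $\beta$. This example also shows your argument proves too much. It never uses anything about $\F$, yet $\{\alpha_1,\alpha_2,\alpha_3^\vee,\alpha_6^\vee=\alpha_6\}$ is a basis only because $\frac34\neq 0$, which fails when $\mathrm{char}(\F)=3$.

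Handling these shifted dual variables is the actual content of the paper's proof. It uses the same induction (adding a crossing on top) but splits into two cases.
\begin{enumerate}
\item If $\beta_1$ already has a crossing in column $j$, then $\alpha_s^\vee$ lies in the inductive set and is unchanged. The shift above is then a unitriangular change of basis, and your argument goes through once this formula is inserted.
\item If $\beta_1$ has no crossing in column $j$, then $s$ is a bottom region, and only $\alpha_s$, not $\alpha_s^\vee$, is in the set. You must expand $\alpha_s^\vee$ in the inductive basis and check that replacing the $\alpha_r^{\vee,\beta_1}$ by the $\alpha_r^{\vee,\beta}$ is still an invertible change of basis. For a single shifted variable this means $1+\frac12 c\neq 0$, where $c$ is the coefficient of $\alpha_r^{\vee,\beta_1}$ in $\alpha_s^\vee$.
\end{enumerate}
The paper gets the second case from its flow calculus. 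No flowline can enter the region $s$, so $\alpha_s$ appears in $\alpha_s^\vee$ with coefficient $1$. Every other coefficient has absolute value at most $1$ (Lemma \ref{lemma:dual variables as linear combinations}). These are inequalities between rational numbers, so they only guarantee nonvanishing in characteristic $0$, which is consistent with the example above.

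Your proposal has no substitute for this estimate. The step you flag as the main obstacle is not bookkeeping; it is where the substance of the lemma lies.
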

\begin{proof}
We proceed by induction. When $c=1$ and $\beta = \sigma_i^{\pm 1}$ is a single crossing, we have $\alpha_n^\vee = \alpha_n$ so $\{\alpha_1, \dots, \alpha_{n-1}, \alpha_n^\vee\} = \{\alpha_1, \dots, \alpha_n\}$ and the claim is trivial.

Assume now the statement is true for a braid $\beta$ with $c$ crossings and consider $\beta' = \beta\sigma_i^{\pm 1}$. Adding a crossing created a new region with $\alpha_{n+c} = \alpha_{n+c}^\vee$ as depicted in Figure \ref{fig:linear independence}.
\begin{figure}[t]
    \begin{tikzpicture}[scale=1]
    \draw[knot] (1,6) to[out=90,in=-90] (2,7);
    \draw[knot] (2,6) to[out=90,in=-90] (1,7);
    \draw[very thick] (0, 6) to (0, 7);
    \draw[very thick] (3, 6) to (3, 7);
    \draw[very thick] (4, 6) to (4, 7);

    \foreach \i in {0, ..., 4}
    {
    \draw[very thick] (\i, 3) to (\i, 3.5);
    \draw[very thick] (\i, 5) to (\i, 6);
    \draw[very thick] (\i, 7) to (\i, 8);
    }

    \draw[] (3.5, 3.45) node[anchor = north]{$\cdots$};
    \draw[very thick] (-0.5, 3.5) rectangle ++(5,1.5);
    \draw[] (2, 4.5) node[anchor = north]{$\beta$};
    \draw[] (0.5, 5.75) node[anchor = north]{$j$};
    \draw[] (1.5, 5.75) node[anchor = north]{$k$};
    \draw[] (2.5, 5.75) node[anchor = north]{$l$};
    \draw[] (3.5, 5.75) node[anchor = north]{$\cdots$};
    \draw[] (1.5, 7.75) node[anchor = north]{$n+c$};
    \end{tikzpicture}  
    \caption{The braid $\beta'$ appearing in the inductive step of the proof of Lemma \ref{lemma:linear independence}. A new crossing $\sigma_i^{-1}$ has been added on top of $\beta$, creating the new region $n+c$ and changing $\alpha_j^\vee$ and $\alpha_l^\vee$. All other original and dual variables remain unchanged.}
    \label{fig:linear independence}
\end{figure}

While all original variables are preserved, adding a new crossing changes the meaning of two dual variables. To state these changes explicitly, we have $\alpha_{j}^{\vee, \beta'} = \alpha_{j}^{\vee, \beta} + \frac{1}{2}(\alpha_k^{\vee} - \alpha_{n+c}^\vee)$ and $\alpha_{l}^{\vee, \beta'} = \alpha_{l}^{\vee, \beta} + \frac{1}{2}(\alpha_k^{\vee} - \alpha_{n+c}^\vee)$ where the regions $j$, $k$ and $l$ are as depicted in Figure \ref{fig:linear independence} and the additional superscript indicates whether the dual variable is interpreted in $\beta$ or $\beta'$. Note that if $j, l \leq n-1$, then the dual variables $\alpha_j^{\vee, \beta'}$ and $\alpha_l^{\vee, \beta'}$ do not appear in the set we are trying to argue is a basis, nor do the old dual variables $\alpha_j^{\vee, \beta}$ and $\alpha_l^{\vee, \beta}$ appear in the inductive hypothesis. In this case, we are done immediately since $\alpha_{n+c} = \alpha_{n+c}^\vee$. The rest of our analysis is dedicated to the case in which at least one of the dual variables $\alpha_j^{\vee, \beta}$ and $\alpha_l^{\vee, \beta}$ appears in the inductive hypothesis and hence the corresponding dual variable in $\beta'$ appears in our set of interest. We will show that one can swap $\alpha_j^{\vee, \beta}$ for $\alpha_j^{\vee, \beta'}$ and similarly $\alpha_l^{\vee, \beta}$ for $\alpha_l^{\vee, \beta'}$ without decreasing the dimension of the span of the set of interest. These changes of basis are independent of each other, so we only treat the situation with the region $j$, noting that the situation with region $l$ is completely analogous. We divide our analysis into two cases.
\begin{enumerate}
    \item There is already a crossing between the $i^\text{th}$ and $i+1^\text{st}$ strand in $\beta$. In that case, $\alpha_k^\vee$ is a part of the old basis and hence also of the new set of interest. Since $\alpha_{j}^{\vee, \beta'} = \alpha_{j}^{\vee, \beta} + \frac{1}{2}(\alpha_k^{\vee} - \alpha_{n+c}^\vee)$ and both $\alpha_k^\vee$ and $\alpha_{n+c}^\vee$ are present in the new set of interest, $\alpha_j^{\vee, \beta}$ can be swapped in the basis for $\alpha_j^{\vee, \beta'}$.
    \item There is no crossing between the $i^\text{th}$ and $i+1^\text{st}$ strand in $\beta$. In this case, swapping $\alpha_j^{\vee, \beta}$ for $\alpha_j^{\vee, \beta'}$ is more difficult, because $\alpha_k^\vee$ does not appear in the old basis. Instead, $\alpha_k$ is a part of the old basis and our first step is to express $\alpha_k^\vee$ in terms of the old basis of $\F\langle \alpha_1, \dots, \alpha_{n-1+c}\rangle$ from the inductive hypothesis.
    \begin{claim}
        Let $\{\alpha_1^\circ, \dots, \alpha_{n-1+c}^\circ\}$ be the old basis of  $\F\langle \alpha_1, \dots, \alpha_{n-1+c} \rangle$ where $\alpha_k^\circ = \alpha_k$, $\alpha_j^\circ = \alpha_j^{\vee, \beta}$ and $\alpha_l^\circ \in \{\alpha_l, \alpha_l^\vee\}$ for all other $l$. Express $\alpha_k^\vee = \sum_{j=1}^{n-1+c}c_j\alpha_j^\circ$ for some unique $c_1, \dots, c_{n-1+c} \in \R$. Then $c_k = 1$ and $|c_l| \leq 1$ for all other $l$.
    \end{claim}
    \begin{proof}
        We prove this by flowing from the red dot $\alpha_k^\vee$ to the black dots representing the basis $\{\alpha_1^\circ, \dots, \alpha_{n-1+c}^\circ\}$ as described in Subsection \ref{subsection:Flows}. Since there is no crossing between the $i^\text{th}$ and $i+1^\text{st}$ strand in $\beta$, there is a flowline with flow rate $1$ that flows straight downwards from $\alpha_k^\vee$ to $\alpha_k$. Note also that the only way for a flowline to enter any region is through the crossings near the top and the bottom of the region. However, the region $k$ in $\beta$ is open both on the top and the bottom, so there is no way a flowline can enter it. This means that the straight flowline described above is the only flowline that reaches $\alpha_k$ and hence the coefficient of $\alpha_k$ is $1$ as required. The remaining part of the claim is Lemma \ref{lemma:dual variables as linear combinations}.
    \end{proof}
    We are in a position to show that $\alpha_j^{\vee, \beta}$ can be replaced by $\alpha_j^{\vee, \beta'}$ if all remaining basis elements remain unchanged. Indeed, we can express $\alpha_j^{\vee, \beta'}= \alpha_j^{\vee, \beta} + \frac{1}{2}(\alpha_k^\vee-\alpha_{n+c}^\vee)$ and so $\alpha_j^{\vee, \beta} = \alpha_j^{\vee, \beta'} - \frac{1}{2}(\alpha_k^\vee-\alpha_{n+c}^\vee) = \alpha_j^{\vee, \beta'} -\frac{1}{2}(\sum_{j=1}^{n-1+c} c_j\alpha_j^\circ + c_{n+c}\alpha_{n+c}^\vee)$. Observe that the coefficient of $\alpha_j^\circ = \alpha_j^{\vee, \beta}$ on the right-hand side of the equation is $-\frac{1}{2}c_j \leq \frac{1}{2}$. Most importantly, this coefficient is not equal to $1$, so $\alpha_j^{\vee, \beta}$ can indeed be expressed as a linear combination of the old basis elements and $\alpha_j^{\vee, \beta'}$. This means that $\alpha_j^{\vee, \beta}$ can be swapped in the basis for $\alpha_j^{\vee, \beta'}$.
\end{enumerate}
The inductive step is complete in either case, thus establishing the lemma. 
\end{proof}

We now strengthen the result slightly.
\begin{lemma}\label{lemma:algebraic independence}
Let $\beta \in \Br_n$ be a braid with $c$ crossings. Then $\{\alpha_1, \dots, \alpha_{n-1}, \alpha_n^\vee, \dots, \alpha_{n-1+c}^{\vee}\}$ $\subset Q_\beta$ is algebraically independent over $\F$.
\end{lemma}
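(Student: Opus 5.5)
The plan is to reduce algebraic independence to the linear independence already established in Lemma \ref{lemma:linear independence}. By Definition \ref{def:Rasmussen's ring}, $Q_\beta = \F[\alpha_1, \dots, \alpha_{n-1+c}]$ is a polynomial ring in $n-1+c$ variables. The elements $\alpha_1, \dots, \alpha_{n-1}, \alpha_n^\vee, \dots, \alpha_{n-1+c}^\vee$ are all homogeneous linear forms in these variables. Indeed, by construction each dual variable is an $\F$-linear combination of $\alpha_1, \dots, \alpha_{n-1+c}$, as noted in Subsection \ref{subsection:Flows}. This comes from the inductive definition of the top action through Proposition \ref{prop:braid rings}, where every identification is linear.

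First, I will note that the map $\varphi: \F\langle \alpha_1, \dots, \alpha_{n-1+c}\rangle \to \F\langle \alpha_1, \dots, \alpha_{n-1+c}\rangle$ sends the standard basis to the list $\alpha_1, \dots, \alpha_{n-1}, \alpha_n^\vee, \dots, \alpha_{n-1+c}^\vee$. By Lemma \ref{lemma:linear independence}, $\varphi$ is an invertible linear map, so it lies in $GL_{n-1+c}(\F)$.

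Second, I will extend $\varphi$ to the $\F$-algebra endomorphism $\Phi$ of $Q_\beta$ defined by $\alpha_i \mapsto \alpha_i$ for $i \le n-1$ and $\alpha_i \mapsto \alpha_i^\vee$ for $i \ge n$. The inverse linear map $\varphi^{-1}$ extends in the same way to an algebra endomorphism $\Psi'$. The composites $\Phi\circ\Psi'$ and $\Psi'\circ\Phi$ agree with the identity on the generators, because the generators are linear forms and the composite restricted to degree one is $\varphi\varphi^{-1}$ or $\varphi^{-1}\varphi$. Hence $\Phi$ is an automorphism of $Q_\beta$, and in particular it is injective.

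Finally, let $p$ be any polynomial in $n-1+c$ indeterminates with $p(\alpha_1, \dots, \alpha_{n-1}, \alpha_n^\vee, \dots, \alpha_{n-1+c}^\vee) = 0$. This element equals $\Phi(p(\alpha_1, \dots, \alpha_{n-1+c}))$, so injectivity of $\Phi$ forces $p = 0$. That is exactly the claimed algebraic independence.

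I expect no serious obstacle here; the substantive input is Lemma \ref{lemma:linear independence}. The only point needing care is to confirm that the dual variables really are linear forms with no constant term or higher-degree part, so that linear independence upgrades to an automorphism of the polynomial ring. I will verify this by induction on crossings, using the explicit single-crossing formulas for $\alpha_1', \dots, \alpha_{n-1}'$.
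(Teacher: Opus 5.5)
Your proposal is correct and follows essentially the same route as the paper: everything reduces to Lemma \ref{lemma:linear independence}, and the basis statement is then upgraded to algebraic independence. The only difference is that last step, where you use the automorphism of $Q_\beta$ induced by the invertible linear change of variables, while the paper argues that these $n-1+c$ elements generate $\F(\alpha_1, \dots, \alpha_{n-1+c})$, whose transcendence degree over $\F$ is $n-1+c$, so they must form a transcendence basis; your version is slightly more elementary and self-contained.
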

\begin{proof}
By Lemma \ref{lemma:linear independence}, we have that $\{\alpha_1, \dots, \alpha_{n-1}, \alpha_n^\vee, \dots, \alpha_{n-1+c}^\vee\}$ is a basis of $\F\langle \alpha_1, \dots, \alpha_{n-1+c}\rangle$ and hence $\F[\alpha_1, \dots, \alpha_{n-1}, \alpha_n^\vee, \dots, \alpha_{n-1+c}^\vee] = \F[\alpha_1, \dots, \alpha_{n-1+c}]$. Passing to the field of fractions $\F(\alpha_1, \dots, \alpha_{n-1+c})$, we observe that the transcendence degree of the field extension $\F(\alpha_1, \dots, \alpha_{n-1+c})/\F$ is $n-1+c$, which is the same as the number of generators of $\F[\alpha_1, \dots, \alpha_{n-1}, \alpha_n^\vee, \dots, \alpha_{n-1+c}^\vee]$ as an $\F$-algebra. Therefore $\alpha_1^\circ, \dots, \alpha_{n-1+c}^\circ$ are algebraically independent.
\end{proof}

\begin{proof}[Proof of Theorem \ref{thm:equivalence Ras and Kho}]
By Lemma \ref{lemma:isomorphism in derived category}, the only outstanding step is to show that when $\overline{\beta}$ is a knot, $C_{\Ras}(\beta)$ is free over $\F[\alpha_1, \dots, \alpha_{n-1}, \alpha_1', \dots, \alpha_{n-1}']$. In the presence of this assumption, the regions near the bottom and the top of $\beta$ are different. By Lemma \ref{lemma:algebraic independence}, $\{\alpha_1, \dots, \alpha_{n-1}, \alpha_n^\vee, \dots, \alpha_{n-1+c}^{\vee}\}$ are algebraically independent over $\F$. This is a stronger result than is needed, in the sense that discarding some of the variables gives the algebraic independence of $\{\alpha_1, \dots, \alpha_{n-1}, \alpha_1', \dots, \alpha_{n-1}'\}$. Therefore, there exists an embedding $\F[\alpha_1, \dots, \alpha_{n-1}, \alpha_1', \dots, \alpha_{n-1}'] \hookrightarrow Q_\beta$. Since $C_{\Ras}(\beta)$ is free over $Q_\beta$ by Lemma \ref{lemma:C free as Q-module}, it is also free over $\F[\alpha_1, \dots, \alpha_{n-1}, \alpha_1', \dots, \alpha_{n-1}']$ as required.
\end{proof}

Theorem \ref{thm:equivalence Ras and Kho} explains the usefulness of Rasmussen's construction. In the process of calculating $\widehat{\mathit{H}}(\overline{\beta})$, we no longer need to pass to an arbitrary free resolution of $C_\Rou(\beta)$ since $C_{\Ras}(\beta)$ provides one for us canonically.

\section{Higher differentials}\label{sec:higher differentials}
Rasmussen's version possesses an additional advantage over Khovanov's version: it can be used to construct spectral sequences from Khovanov-Rozansky homology to $\mathfrak{sl}_k$ homologies for all $k \in \N$. To define them, we will enhance Rasmussen's double chain complexes associated to crossings to triple complexes
$$
C_{\Ras}(\sigma_i)=
\begin{tikzcd}
    Q_{\sigma_i} \arrow[rr, shift left, "\alpha_i-\alpha_i'"] && Q_{\sigma_i} \arrow[ll, blue, shift left, "d_k"]\\
    &&\\
    Q_{\sigma_i} \arrow[uu, "\alpha_i+\alpha_i'"] \arrow[rr, shift left, "\alpha_i^2-\alpha_i'^2"] && Q_{\sigma_i} \arrow[ll, blue, shift left, "d_k"] \arrow[uu, "1", swap]
\end{tikzcd}
\text{ and }
C_{\Ras}(\sigma_i^{-1}) = 
\begin{tikzcd}
    Q_{\sigma_i^{-1}} \arrow[rr, shift left, "\alpha_i^2-\alpha_i'^2"] && Q_{\sigma_i^{-1}} \arrow[ll, blue, shift left, "d_k"]\\
    &&\\
    Q_{\sigma_i^{-1}} \arrow[uu, "1"] \arrow[rr, shift left, "\alpha_i-\alpha_i'"] && Q_{\sigma_i^{-1}} \arrow[ll, blue, shift left, "d_k"] \arrow[uu, "\alpha_i+\alpha_i'", swap]
\end{tikzcd}
$$
by adding the maps $d_k$ drawn in blue. The chain map on $\widehat{\mathit{H}}(K)$ induced by $d_k$ will represent the differential on the first page of the spectral sequence to $\mathfrak{sl}_k$.

The map $d_k$ has a nontrivial polynomial expression in terms of the variables $\alpha_1, \dots, \alpha_{n-1}, \alpha_1', \dots, \alpha_{n-1}' \in Q_{\sigma_i^{\pm 1}}$, defined as follows. Consider the expression
$$W_k := \frac{1}{n^{k+1}} \sum_{l=1}^n \left(\left(\sum_{j=1}^{n-1} j\alpha_j - \sum_{j=l}^{n-1}n\alpha_j\right)^{k+1} - \left(\sum_{j=1}^{n-1} j\alpha_j' - \sum_{j=l}^{n-1}n\alpha_j'\right)^{k+1}\right) \in Q_{\sigma_i^{\pm 1}},$$
where we treat $Q_{\sigma_i^{\pm 1}}$ as an $\F[\alpha_1, \dots, \alpha_{n-1}, \alpha_1', \dots, \alpha_{n-1}']$-module as in the previous section. The map $d_k$ in Rasmussen's complexes $C_{\Ras}(\sigma_i^{\pm 1})$ is the unique chain map satisfying $(d_+ + d_k)^2 = W_k$. In other words, we now show that $W_k$ is divisible by both $\alpha_i-\alpha_i'$ and $\alpha_i+\alpha_i'$ and then set the leftward maps in the two diagrams equal to $\frac{W_k}{\alpha_i-\alpha_i'}$ or $\frac{W_k}{\alpha_i^2-\alpha_i'^2}$.

\begin{lemma}
For any $n, k \in \N$ and $i \in \{1, \dots, n-1\}$, the quantity $W_k$ is divisible by $\alpha_i^2-\alpha_i'^2$.
\end{lemma}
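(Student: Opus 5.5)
The plan is to rewrite $W_k$ so that it becomes a difference $p(\alpha)-p(\alpha')$ for a single polynomial $p$ that is invariant under $\sigma_i$. Divisibility should then follow from the description of $R^{\sigma_i}$ given in the section on Soergel bimodules, together with the explicit formulas for the top action on $Q_{\sigma_i^{\pm 1}}$.

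\emph{Step 1: decode the inner expression.} Formally introduce $X_1,\dots,X_n$ with $\alpha_j = X_{j+1}-X_j$, i.e.\ $\alpha_j = e_{j+1}-e_j$ inside $\check{E}$. A telescoping computation gives $\sum_{j=1}^{n-1} j\alpha_j = nX_n - \sum_{m=1}^n X_m$ and $n\sum_{j=l}^{n-1}\alpha_j = n(X_n-X_l)$. Hence the $l$-th inner expression equals $n(X_l-\bar X)$, where $\bar X = \frac1n\sum_m X_m$. This is a genuine element of $E=\F\langle\alpha_1,\dots,\alpha_{n-1}\rangle$, namely the projection of $e_l$ onto $E$ along $e_1+\dots+e_n$. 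Consequently $W_k = p(\alpha)-p(\alpha')$, where
$$p = \sum_{l=1}^n (X_l-\bar X)^{k+1}\in S(E)=R,$$
and $p(\alpha')$ denotes the same polynomial with every $\alpha_j$ replaced by $\alpha_j'$. Because $S_n$ permutes the vectors $X_l-\bar X$ (it permutes the $e_l$ and fixes $e_1+\dots+e_n$), the polynomial $p$ is $S_n$-invariant. In particular $p\in R^{\sigma_i}$.

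\emph{Step 2: use invariant generators.} By the stated description of $R^{\sigma_i}$ (which requires $\mathrm{char}(\F)\neq 2$), we can write $p = P(u_1,\dots,u_{n-2},\alpha_i^2)$ for some polynomial $P$. Here the $u$'s are the $\sigma_i$-fixed linear generators: $\alpha_j$ for $|i-j|>1$, and $\alpha_{i\pm1}+\frac12\alpha_i$. Then $p(\alpha') = P(u_1',\dots,u_{n-2}',\alpha_i'^2)$. Now read off the top action on $Q_{\sigma_i^{\pm1}}$. We have $\alpha_j'=\alpha_j$ for $|i-j|>1$, and
$$\alpha_{i\pm1}'+\tfrac12\alpha_i' = \alpha_{i\pm1}+\tfrac12(\alpha_i-\alpha_n)+\tfrac12\alpha_n = \alpha_{i\pm1}+\tfrac12\alpha_i.$$
So $u_m'=u_m$ in $Q_{\sigma_i^{\pm1}}$ for every $m$. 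Therefore
$$W_k = P(u,\alpha_i^2)-P(u,\alpha_i'^2).$$
Since $a-b$ divides $f(a)-f(b)$ for any polynomial $f$ over the polynomial ring $\F[u]$, this is divisible by $\alpha_i^2-\alpha_i'^2$ in $Q_{\sigma_i^{\pm1}}=\F[\alpha_1,\dots,\alpha_n]$. Here $\alpha_i^2-\alpha_i'^2=\alpha_i^2-\alpha_n^2\neq 0$, so the quotient is a well-defined polynomial. This also gives divisibility by $\alpha_i-\alpha_i'$ and by $\alpha_i+\alpha_i'$ separately, which is what is needed to define both leftward maps $d_k$.

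\emph{Expected obstacle.} The only real work is Step 1: verifying the telescoping identity, and checking that the $S_n$-action on $E$ in the $\alpha$-coordinates matches permutation of the $X_l$, so that $p$ is honestly $\sigma_i$-invariant. If one prefers to avoid the auxiliary $X$'s, one can instead check directly that $\sigma_i$ (with $\alpha_i\mapsto-\alpha_i$ and $\alpha_{i\pm1}\mapsto\alpha_{i\pm1}+\alpha_i$) swaps the $l=i$ and $l=i+1$ inner expressions and fixes the others. Step 2 is then formal.
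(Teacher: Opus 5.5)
Your proof is correct, and it takes a genuinely different route from the paper.

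\textbf{How the paper argues.} The paper never writes $W_k$ as $p(\alpha)-p(\alpha')$ for a symmetric $p$. Instead it works in $Q_{\sigma_i^{\pm 1}}$ and specializes $\alpha_i'=\alpha_i$ and $\alpha_i'=-\alpha_i$ separately. In each case it checks by explicit index bookkeeping that $W_k$ vanishes, with a case split $i\le n-2$ versus $i=n-1$, and $l\in\{i,i+1\}$ versus $l\notin\{i,i+1\}$. In the second specialization the top variables become $\sigma_i$ applied to the bottom ones. So the verification that the $l=i$ and $l=i+1$ summands cancel while the others vanish is exactly a hand check of the $\sigma_i$-invariance that you get for free from $X_l-\bar X$. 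The paper then leaves implicit the step that divisibility by the two coprime linear forms $\alpha_i\pm\alpha_i'$ (coprime since $\mathrm{char}(\F)\neq 2$) gives divisibility by their product.

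\textbf{What your route buys.} It removes the case analysis. It gives divisibility by $\alpha_i^2-\alpha_i'^2$ in one stroke, with no UFD/coprimality step. It produces the leftward map explicitly as the divided difference $\frac{P(u,\alpha_i^2)-P(u,\alpha_i'^2)}{\alpha_i^2-\alpha_i'^2}$. It shows the construction works for any $\sigma_i$-invariant $p$, not just centered power sums. And it explains where $W_k$ comes from: it is the unreduced potential $\sum_l X_l^{k+1}$ written in centered coordinates.

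\textbf{What it costs, and a line to add.} Your argument relies on the inclusion $R^{\sigma_i}\subseteq\F[u,\alpha_i^2]$. The paper only justifies the opposite inclusion, namely that the listed generators are fixed. You should add the one-line justification. After the unitriangular change of variables from $(\alpha_1,\dots,\alpha_{n-1})$ to $(u,\alpha_i)$, we have $R=\F[u,\alpha_i]$ and $\sigma_i$ simply negates $\alpha_i$. Since $\mathrm{char}(\F)\neq 2$, every invariant must then be even in $\alpha_i$.

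The paper's approach, by contrast, is pure substitution and needs no invariant theory, at the price of obscuring why the cancellations occur.
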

\begin{proof}

Let us first assume $\alpha_i' = \alpha_i$. Then also $\alpha_{i \pm 1}' = \alpha_{i \pm 1} + \frac{1}{2}\alpha_i - \frac{1}{2}\alpha_i' = \alpha_{i \pm 1}$ in $Q_{\sigma_i^{\pm 1}}$. Since $\alpha_j' = \alpha_j$ in $Q_{\sigma_i^{\pm 1}}$ for all $j$ with $|i-j| > 1$, it follows that the expressions inside the inner pairs of parentheses in $W_k$ coincide and thus $W_k$ evaluates to $0$. Therefore $\alpha_i-\alpha_i'$ divides $W_k$ as required.

Similarly, if $\alpha_i' = -\alpha_i$, then $\alpha_{i \pm 1}' = \alpha_{i \pm 1} + \alpha_i$ and we still have $\alpha_j' = \alpha_j$ in $Q_{\sigma_i^{\pm 1}}$ for all $j$ with $|i-j| > 1$. By carefully inspecting the outer sum, we will show that most of its terms vanish. We have two cases, each of which splits further into two subcases.
\begin{enumerate}
    \item $i \in \{1, \dots, n-2\}$. It follows that $\sum_{j=1}^{n-1}j\alpha_j = \sum_{j=1}^{n-1}j\alpha_j'$ by a simple calculation.
\begin{enumerate}
    \item If $l \notin \{i, i+1\}$, then either $l \leq i-1, i, i+1 \leq n-1$ or $i-1, i, i+1 < l$ and it can be verified that in both cases $\sum_{j=l}^{n-1} n\alpha_j = \sum_{j=l}^{n-1} n\alpha_j'$. Therefore $\left(\sum_{j=1}^{n-1} j\alpha_j - \sum_{j=l}^{n-1}n\alpha_j\right)^{k+1} = \left(\sum_{j=1}^{n-1} j\alpha_j' - \sum_{j=l}^{n-1}n\alpha_j'\right)^{k+1}$ and so the summand corresponding to $l$ vanishes.
    \item If $l \in \{i, i+1\}$, then the summand corresponding to $l$ does not vanish by itself. However, we will show that the summands corresponding to $i$ and $i+1$ cancel each other. To see this, we can verify through an easy calculation that $\sum_{j=i}^{n-1}n\alpha_j' = \sum_{j=i+1}^{n-1}n\alpha_j$ and similarly $\sum_{j=i+1}^{n-1}n\alpha_j' = \sum_{j=i}^{n-1}n\alpha_j$. It is now apparent that the terms corresponding to $i, i+1$ are negatives of each other.
\end{enumerate}
    \item $i = n-1$.
    \begin{enumerate}
        \item If $l \notin \{n-1, n\}$, then $l \leq n-2$ and we can calculate that $\sum_{j=1}^{n-1}j\alpha_j' = \sum_{j=1}^{n-3} j\alpha_j + (n-2)(\alpha_{n-2}+\alpha_{n-1}) + (n-1)(-\alpha_{n-1}) = \sum_{j=1}^{n-1}j\alpha_j - n\alpha_{n-1}$ and similarly $\sum_{j=1}^{n-1}n\alpha_j' = \sum_{j=1}^{n-1}n\alpha_j+n(\alpha_{n-2}+\alpha_{n-1})+n(-\alpha_{n-1}) = \sum_{j=1}^{n-1}n\alpha_j - n\alpha_{n-1}$. Looking at the difference of these two sums, we see that $\sum_{j=1}^{n-1} j\alpha_j' - \sum_{j=l}^{n-1}n\alpha_j' =\sum_{j=1}^{n-1} j\alpha_j - \sum_{j=l}^{n-1}n\alpha_j$. By raising the equation to the $(k+1)^{\text{st}}$ power, we see that the summand corresponding to $l$ vanishes.
        \item As above, if $l \in \{n-1, n\}$, then the summand corresponding to $l$ does not vanish by itself -- instead, those two summands cancel each other. This can be verified with a straightforward manual calculation. The summand corresponding to $l = n-1$ is $(\sum_{j=1}^{n-1}j\alpha_j - n\alpha_{n-1})^{k+1} - (\sum_{j=1}^{n-1}j\alpha_j)^{k+1}$ and the summand corresponding to $l = n$ can be seen to equal $(\sum_{j=1}^{n-1}j\alpha_j)^{k+1} - (\sum_{j=1}^{n-1}j\alpha_j - n\alpha_{n-1})^{k+1}$.
    \end{enumerate}
\end{enumerate}
In either case, $W_k$ evaluates to $0$ and so $\alpha_i+\alpha_i'$ divides $W_k$ as required.
\end{proof}
This shows that $d_k$ is a well-defined map on a Rasmussen's chain complex $C_{\Ras}(\sigma_i^{\pm 1})$ associated to a single crossing. The definition extends to arbitrary braids inductively with the use of the Leibniz formula, \emph{i.e.} $d_k(x \otimes y) = d_k(x) \otimes y + x \otimes d_k(y)$.

\begin{example}
Let $n = 2$ and $k \in \N$. Then $d_k$ can be written down explicitly. We have
\begin{align*}
W_k &= \frac{1}{2^{k+1}}\left(\left(\left(-\alpha_1\right)^{k+1}+\alpha_1^{k+1}\right)- \left(\left(-\alpha_1'\right)^{k+1}+\alpha_1'^{k+1}\right)\right)\\
&=\frac{1}{2^{k+1}}\left((-1)^{k+1}+1\right)\left(\alpha_1^{k+1}-\alpha_1'^{k+1}\right)\\
&= \begin{cases}
    \frac{1}{2^k}(\alpha_1^{k+1}-\alpha_1'^{k+1}) & | \ k \text{ odd}\\
    0 & | \ k \text{ even.}
\end{cases}
\end{align*}
Therefore $d_k = 0$ for all even $k$. One interesting corollary of the case $k = 2$ is that the reduced Khovanov-Rozansky and $\mathfrak{sl}_2$/Khovanov homologies of $\overline{\sigma_1^{2m+1}} = T_{2, 2m+1}$ are isomorphic for all $m \in \N$, \emph{i.e.} $\widehat{\mathit{H}}(T_{2, 2m+1}) \cong \mathit{Kh}(T_{2, 2m+1})$. This further implies that $d_k = 0$ for all $k \geq 2$ on $\widehat{\mathit{H}}(T_{2, 2m+1})$, despite the fact that $d_k \neq 0$ for odd $k$ on the level of $C_{\Ras}(\sigma_1^{2m+1})$.
\end{example}
\begin{example}
Let $n = 3$ and $k = 1$. We calculate
\begin{align*}
W_1 &= \frac{1}{3^2}\left( (-2\alpha_1-\alpha_2)^2 + (\alpha_1-\alpha_2)^2 + (\alpha_1+2\alpha_2)^2 \right) \\
&\quad - \frac{1}{3^2} \left( (-2\alpha_1'-\alpha_2')^2 + (\alpha_1'-\alpha_2')^2 + (\alpha_1'+2\alpha_2')^2 \right).
\end{align*}
If $i = 1$, then we have $\alpha_2' = \alpha_2 + \frac{1}{2}(\alpha_1-\alpha_1')$ in $Q_{\sigma_1}$ and $W_1$ evaluates to $\frac{1}{2} (\alpha_1^2-\alpha_1'^2)$. Note that this implies $d_1$ has the same form on two and three-stranded braids.
\end{example}
\begin{example}
Let $n = 3$ and $k = 2$. The computational details quickly become messy as $k$ increases, so we omit some steps in this example. An explicit calculation shows that $\sum_{l=1}^3(\sum_{j=1}^{2}j\alpha_j-\sum_{j=l}^{2}3\alpha_j)^3$ simplifies to $-6\alpha_1^3+6\alpha_2^3-9\alpha_1^2\alpha_2+9\alpha_1\alpha_2^2$. If $i = 1$, then $\alpha_2'=\alpha_2+\frac{1}{2}(\alpha_1-\alpha_1')$ in $Q_{\sigma_1}$ as in the previous example and some calculation shows that $W_2 = -\frac{1}{4}(\alpha_1^2-\alpha_1'^2)(\alpha_1+2\alpha_2)$. This indeed confirms that $\alpha_1^2-\alpha_1'^2$ divides $W_2$ and gives us an explicit form of $d_2$, the differential on the first page of the spectral sequence from Khovanov-Rozansky to $\mathfrak{sl}_2$/Khovanov homology.
\end{example}

\section{Dot-sliding homotopies}\label{sec:dot-sliding homotopies}
This section introduces the maps $\zeta_{\alpha_j}$, the analogues of the dot-sliding homotopies $\xi_{j}$ considered by Gorsky and Hogancamp \cite{gorsky2022hilbert}.

\subsection{Map \texorpdfstring{$f_\beta$}{fb}}
Let $\beta = \prod_{k=1}^c \sigma_{i_k}^{a_k} \in \Br_n$ be a braid. In this subsection, we construct a map $f_\beta: \F[\alpha_1, \dots, \alpha_{n-1}] \to \F[\alpha_1', \dots, \alpha_{n-1}']$ that sends the variables associated to the regions on the bottom of $\beta$ to their primed counterparts. As has been a recurrent theme throughout the paper, the construction of $f_\beta$ is inductive. If $c=0$, \emph{i.e.} $\beta = 1 \in \Br_n$, then $f_1(\alpha_j) = \alpha_j'$ for all $j \in \{1, \dots, n-1\}$. If $c=1$, \emph{i.e.} $\beta = \sigma_i^{\pm 1}$ is a single crossing, then $f_{\sigma_i^{\pm 1}}: \F[\alpha_1, \dots, \alpha_{n-1}] \to \F[\alpha_1', \dots, \alpha_{n-1}']$ is given by the unique extension of
\begin{align*}
    &\alpha_i \mapsto -\alpha_i',\\
    &\alpha_{i\pm 1} \mapsto \alpha_{i\pm 1}' + \alpha_i', \text{ and}\\
    &\alpha_j \mapsto \alpha_j' \text{ for $j$ with $|i-j|>1$.}
\end{align*}
to a ring homomorphism. Let now $\beta = \beta_1\beta_2$ be a composition of two braids. Then define $f_\beta = f_{\beta_2}f_{\beta_1}$ as the composition of $f_{\beta_1}$ and $f_{\beta_2}$ as well. Note that this is well-defined since the primed variables of $\beta_1$ are identified with the bottom variables of $\beta_2$ in the tensor product $Q_{\beta_1} \otimes_{\F[\gamma_1, \dots, \gamma_{n-1}]} Q_{\beta_2}$.
\begin{lemma}
The map $f_\beta$ is well-defined, \emph{i.e.} it does not depend on the expression of $\beta$ as a product of braids.
\end{lemma}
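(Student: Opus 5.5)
The plan is to reduce well-definedness to the fact that the single-crossing maps come from the permutation action of $S_n$ on $E$. A braid word $\beta=\prod_{k=1}^c\sigma_{i_k}^{a_k}$ yields $f_\beta$ as an iterated composite of the elementary maps $f_{\sigma_{i_k}^{a_k}}$. Two words represent the same element of $\Br_n$ iff they are related by the relations $\sigma_i\sigma_i^{-1}=1=\sigma_i^{-1}\sigma_i$, far commutativity, and the braid relation. So it suffices to check three things: the inductive definition is independent of how a word is split as $\beta_1\beta_2$, and it respects each of these relations.

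First, for bookkeeping, I identify $\F[\alpha_1',\dots,\alpha_{n-1}']$ with $\F[\alpha_1,\dots,\alpha_{n-1}]=S(E)$ via $\alpha_j'\leftrightarrow\alpha_j$. This is exactly the identification of the top variables of $\beta_1$ with the bottom variables of $\beta_2$ used in Proposition \ref{prop:braid rings}. Under it, $f_1=\id$, and $f_{\sigma_i^{\pm1}}$ becomes the ring automorphism $s_i$ of $S(E)$ with $\alpha_i\mapsto-\alpha_i$, $\alpha_{i\pm1}\mapsto\alpha_{i\pm1}+\alpha_i$ and $\alpha_j\mapsto\alpha_j$ for $|i-j|>1$. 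By the formulas in the description of $R^{\sigma_i}$, this is precisely the action of the transposition $(i\ i+1)$ on $E=\F\langle\alpha_1,\dots,\alpha_{n-1}\rangle$, extended to $S(E)$. In particular $f_{\sigma_i}=f_{\sigma_i^{-1}}$.

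Independence of the splitting is associativity of composition. For $\beta=\beta_1\beta_2\beta_3$, both bracketings give $f_{\beta_3}f_{\beta_2}f_{\beta_1}$. By induction on $c$, $f_\beta$ for a word is the composite $f_{\sigma_{i_c}^{a_c}}\circ\dots\circ f_{\sigma_{i_1}^{a_1}}$, whatever splittings were used.

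For the relations:
\begin{itemize}
\item $\sigma_i\sigma_i^{-1}$ gives $s_i^2=\id$. This is a one-line check: $\alpha_i\mapsto-\alpha_i\mapsto\alpha_i$, and $\alpha_{i\pm1}\mapsto\alpha_{i\pm1}+\alpha_i\mapsto\alpha_{i\pm1}+\alpha_i-\alpha_i$.
\item Far commutativity and the braid relation become the Coxeter relations $s_is_j=s_js_i$ for $|i-j|>1$ and $s_is_{i+1}s_i=s_{i+1}s_is_{i+1}$.
\end{itemize}
These Coxeter relations hold automatically, because $s_i$ is the image of $(i\ i+1)$ under a group homomorphism $S_n\to\mathrm{Aut}(S(E))$, namely the action of $S_n$ on $\check E$ restricted to the invariant subspace $E$. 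The order reversal $f_{\beta_1\beta_2}=f_{\beta_2}f_{\beta_1}$ is harmless: it only means $\beta\mapsto f_\beta$ factors as an anti-homomorphism through $\Br_n\to S_n$, and anti-homomorphisms still respect relations. If one prefers not to rely on this, the braid relation can be checked directly on the generators $\alpha_{i-1},\alpha_i,\alpha_{i+1},\alpha_{i+2}$, which is a short computation.

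The main obstacle I expect is being careful that the identification $\alpha_j'\leftrightarrow\alpha_j$ is compatible with the gluing in Proposition \ref{prop:braid rings}. That is, I need the composite $f_{\beta_2}f_{\beta_1}$ to be literally the map defined there, with no hidden change of variables. Once that is settled, the result follows: $f_\beta$ depends only on the image of $\beta$ in $S_n$, and is the corresponding permutation action, up to inversion.
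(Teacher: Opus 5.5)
Your argument is correct, but it takes a different route from the paper.

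The paper verifies the relations by hand. It composes the substitution rules to get $f_{\sigma_i\sigma_i^{-1}}=f_1$. It then tracks $\alpha_{i-1},\alpha_i,\alpha_{i+1},\alpha_{i+2}$ through $\sigma_i\sigma_{i+1}\sigma_i$ and asserts that $\sigma_{i+1}\sigma_i\sigma_{i+1}$ gives the same result. Far commutativity and independence of the splitting $\beta=\beta_1\beta_2$ are left implicit.

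You instead observe that, once $\alpha_j'$ is identified with $\alpha_j$, the elementary map $f_{\sigma_i^{\pm1}}$ is literally the action of $(i\ i+1)$ on $E$, as already recorded in the description of $R^{\sigma_i}$. Hence $f_\beta=\rho(w_\beta^{-1})$, where $w_\beta\in S_n$ is the image of $\beta$ under $\sigma_i\mapsto(i\ i+1)$ and $\rho\colon S_n\to\mathrm{Aut}(S(E))$ is the permutation action. Every relation then holds for free.

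Your version gains three things over the paper's:
\begin{itemize}
\item it explains why the relations hold;
\item it covers the cases the paper skips;
\item it immediately yields the paper's two later observations: that $f_\beta$ depends only on the underlying permutation, and that it sends the bottom region between two strands to the signed top region between the same two strands (since $e_{j+1}-e_j$ goes to the difference of the basis vectors at the top endpoints of those strands).
\end{itemize}
The paper's version is self-contained and needs no link to the $S_n$-action. The cost is a direct computation with some cases (far commutativity, the other side of the braid relation) left to the reader.

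The compatibility issue you flag as the main obstacle is not a real one. The paper defines $f_{\beta_1\beta_2}=f_{\beta_2}f_{\beta_1}$ precisely via the identification of the top variables of $\beta_1$ with the bottom variables of $\beta_2$, which is exactly your identification $\alpha_j'\leftrightarrow\alpha_j$.
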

\begin{proof}
We need to show that $f_\beta$ is invariant under the two braid Reidemeister moves. The verification of the first braid Reidemeister move amounts to showing that $f_{\sigma_i\sigma_i^{-1}} = f_{\sigma_i^{-1}\sigma_i} = f_1$. This check can be performed manually and is essentially the same for $f_{\sigma_i\sigma_i^{-1}}$ and $f_{\sigma_i^{-1}\sigma_i}$, so we only include the details for $f_{\sigma_i\sigma_i^{-1}}$. Let $\gamma_1, \dots, \gamma_{n-1}$ be the variables representing the top action on $Q_{\sigma_i}$ or equivalently the bottom action on $Q_{\sigma_i^{-1}}$. Then the map $f_{\sigma_i\sigma_i^{-1}}$ is given by $\alpha_i \mapsto -\gamma_i  \mapsto \alpha_i'$, $\alpha_{i\pm 1} \mapsto \gamma_{i\pm 1}+\gamma_i \mapsto (\alpha_{i \pm 1}' + \alpha_i') - \alpha_i' = \alpha_{i \pm 1}'$, and $\alpha_j \mapsto \gamma_j \mapsto \alpha_j'$ for all $j$ with $|i-j|>1$. In these expressions, the first $\mapsto$ denotes an application of $f_{\sigma_i}$ and the second $\mapsto$ denotes an application of $f_{\sigma_i^{-1}}$. Therefore, $f_{\sigma_i\sigma_i^{-1}}(\alpha_j) = \alpha_j'$ for all $j \in \{1, \dots, n-1\}$ and so $f_{\sigma_i\sigma_i^{-1}}=f_1$ as required.

We perform a similar calculation to establish that $f_{\sigma_i\sigma_{i+1}\sigma_i} = f_{\sigma_{i+1}\sigma_i\sigma_{i+1}}$. Let us first calculate $f_{\sigma_i\sigma_{i+1}\sigma_i}$. Let $\gamma_1, \dots, \gamma_{n-1}$ be the variables representing the top action on $\sigma_i$ and the bottom action on $\sigma_{i+1}$ and let $\delta_1, \dots, \delta_{n-1}$ be the variables representing the top action on $\sigma_{i+1}$ and the bottom action on $\sigma_i$. Then
\begin{align*}
&\alpha_{i-1} \mapsto \gamma_{i-1}+\gamma_i \mapsto \delta_{i-1}+\delta_i+\delta_{i+1} \mapsto \alpha_{i-1}' + \alpha_{i}'+\alpha_{i+1}'\\
&\alpha_i \mapsto -\gamma_i \mapsto -(\delta_i+\delta_{i+1}) \mapsto -(-\alpha_i'+\alpha_{i+1}'+\alpha_i')=-\alpha_{i+1}'\\
&\alpha_{i+1} \mapsto \gamma_{i+1}+\gamma_i \mapsto -\delta_{i+1}+\delta_i+\delta_{i+1} = \delta_{i} \mapsto -\alpha_{i}'\\
&\alpha_{i+2} \mapsto \gamma_{i+2} \mapsto \delta_{i+2}+\delta_{i+1} \mapsto \alpha_{i+2}' + \alpha_{i+1}' + \alpha_{i}'
\end{align*}
and $\alpha_j \mapsto \gamma_j \mapsto \delta_j \mapsto \alpha_j'$ for all $j$ with $|i-j|>1$. It is not difficult to see that a similar calculation for $f_{\sigma_{i+1}\sigma_i\sigma_{i+1}}$ gives the same result.
\end{proof}
In fact, since $f_{\sigma_i} = f_{\sigma_i^{-1}}$, we have that $f_\beta$ only depends on the underlying permutation of $\beta$ and not on the braid itself.

\begin{remark}
    The map $f_{\beta}$ has a geometric interpretation. Every bottom region between two strands is sent to the signed top region between the same two strands. For example, if $\beta = \sigma_i$ is a single crossing, then $\alpha_{i+i} \mapsto \alpha_i'+\alpha_{i+1}'$ since the region $\alpha_{i+1}$ between the $i+1^{\text{st}}$ and $i+2^{\text{nd}}$ strands ``expands'' to the region between the $i^{\text{th}}$ and $i+2^{\text{nd}}$ regions on top. Crossing of two strands introduces a minus sign.
\end{remark}

\subsection{Maps \texorpdfstring{$\zeta_\alpha$}{zetaa}}
Let $\beta \in \Br_n$. We now construct a family of maps $\zeta_{\alpha_j}^\beta: C(\beta) \to C(\beta)$ parametrized by $j \in \{1, \dots, n-1\}$. The maps $\zeta_{\alpha_j}^\beta$ should be thought of as the analogues of the maps $\xi_j$ studied in \cite{gorsky2022hilbert}. When there is no ambiguity regarding the underlying braid $\beta$, we typically omit the superscript in $\zeta_{\alpha_j}^\beta$ to aid clarity.

\vspace{1em}

We begin by defining the maps $\zeta_{\alpha_j}$ on $\beta = \sigma_i^{\pm 1}$ and eventually extend the construction to arbitrary braids. Consider first the case $\beta = \sigma_i$. Then $\zeta_{\alpha_j} = 0$ for all $j$ with $|i-j| > 1$. The remaining three maps $\zeta_{\alpha_{i-1}}$, $\zeta_{\alpha_i}$, and $\zeta_{\alpha_{i+1}}$ are nonzero and defined in terms of Rouquier complexes as the green downwards maps in
$$
\begin{tikzcd}
    \B_i \arrow[d, dark green, shift left, "-\frac{1}{2}"]\\
    R \arrow[u, shift left, "\alpha_i+\alpha_i'"]
\end{tikzcd}
,
\quad
\begin{tikzcd}
    \B_i \arrow[d, dark green, shift left, "1"]\\
    R \arrow[u, shift left, "\alpha_i+\alpha_i'"]
\end{tikzcd}
\quad
\text{ and }
\quad
\begin{tikzcd}
    \B_i \arrow[d, dark green, shift left, "-\frac{1}{2}"]\\
    R \arrow[u, shift left, "\alpha_i+\alpha_i'"]
\end{tikzcd}
$$
respectively. Note that $\zeta_{\alpha_{i-1}} = \zeta_{\alpha_{i+1}}$. The same idea can be expressed in terms of Rasmussen's complexes as the maps $\zeta_{\alpha_{i-1}}$, $\zeta_{\alpha_i}$, and $\zeta_{\alpha_{i+1}}: C_{\Ras}(\sigma_i) \to C_{\Ras}(\sigma_i)$ given by the green downwards maps in the commutative squares
$$
\begin{tikzcd}
    Q_{\sigma_i} \arrow[rr, "\alpha_i^2-\alpha_i'^2"] \arrow[dd, dark green, shift right, swap, "\alpha_i+\alpha_i'"] && Q_{\sigma_i} \arrow[dd, dark green, shift left, "1"]\\
    &&\\
    Q_{\sigma_i} \arrow[uu, shift right, swap, "1"] \arrow[rr, "\alpha_i-\alpha_i'"] && Q_{\sigma_i} \arrow[uu, shift left, "\alpha_i+\alpha_i'"]
\end{tikzcd}
\text{ for $\zeta_{\alpha_i}$ and }
\begin{tikzcd}
    Q_{\sigma_i} \arrow[rr, "\alpha_i^2-\alpha_i'^2"] \arrow[dd, dark green, shift right, swap, "-\frac{\alpha_i+\alpha_i'}{2}"] && Q_{\sigma_i} \arrow[dd, dark green, shift left, "-\frac{1}{2}"]\\
    &&\\
    Q_{\sigma_i} \arrow[uu, shift right, swap, "1"] \arrow[rr, "\alpha_i-\alpha_i'"] && Q_{\sigma_i} \arrow[uu, shift left, "\alpha_i+\alpha_i'"]
\end{tikzcd}
\text{ for $\zeta_{\alpha_{i-1}} = \zeta_{\alpha_{i+1}}.$}
$$
For a linear combination $\alpha = \sum_{j=1}^{n-1} \lambda_j\alpha_j$ with $\lambda_1, \dots, \lambda_{n-1} \in \F$, define $\zeta_{\alpha}$ to be the corresponding linear combination of the maps $\zeta_{\alpha_j}$, \emph{i.e.}  $\zeta_{\alpha} = \sum_{j=1}^{n-1} \lambda_j \zeta_{\alpha_j}$. With that convention, we have defined the maps $\zeta_\alpha: C(\sigma_i) \to C(\sigma_i)$ on chain complexes associated to positive crossings.

Let now $\beta = \sigma_i^{-1}$ be a negative crossing for some $i \in \{1, \dots, n-1\}$. As before, we have that $\zeta_{\alpha_j} = 0$ for all $j \in \{1, \dots, n-1\}$. The maps $\zeta_{\alpha_{i-1}}$, $\zeta_{\alpha_{i}}$, and $\zeta_{\alpha_{i+1}}$ are nonzero and defined in terms of Rouquier complexes as the downwards maps in
$$
\begin{tikzcd}
    R \arrow[d, dark green, shift left, "-\frac{\alpha_i+\alpha_i'}{2}"]\\
    \B_i \arrow[u, shift left, "1"]
\end{tikzcd}
,
\quad
\begin{tikzcd}
    R \arrow[d, dark green, shift left, "\alpha_i+\alpha_i'"]\\
    \B_i \arrow[u, shift left, "1"]
\end{tikzcd}
\quad
\text{ and }
\quad
\begin{tikzcd}
    R \arrow[d, dark green, shift left, "-\frac{\alpha_i+\alpha_i'}{2}"]\\
    \B_i \arrow[u, shift left, "1"]
\end{tikzcd}
$$
respectively. Alternatively, the maps $\zeta_\alpha$ can also be expressed in the language of Rasmussen's complexes as the downwards maps in 
$$
\begin{tikzcd}
    Q_{\sigma_i^{-1}} \arrow[rr, "\alpha_i-\alpha_i'"] \arrow[dd, dark green, shift right, swap, "1"] && Q_{\sigma_i^{-1}} \arrow[dd, dark green, shift right, swap, "\alpha_i+\alpha_i'"]\\
    &&\\
    Q_{\sigma_i^{-1}} \arrow[uu, shift right, swap, "\alpha_i+\alpha_i'"] \arrow[rr, "\alpha_i^2-\alpha_i'^2"]  && Q_{\sigma_i^{-1}} \arrow[uu, shift right, swap, "1"]
\end{tikzcd}
\text{ for $\zeta_{\alpha_i}$ and }
\begin{tikzcd}
    Q_{\sigma_i^{-1}} \arrow[rr, "\alpha_i-\alpha_i'"] \arrow[dd, dark green, shift right, swap, "-\frac{1}{2}"] && Q_{\sigma_i^{-1}}\arrow[dd, dark green, shift right, swap, "-\frac{\alpha_i+\alpha_i'}{2}"]\\
    &&\\
    Q_{\sigma_i^{-1}} \arrow[uu, shift right, swap, "\alpha_i+\alpha_i'"] \arrow[rr, "\alpha_i^2-\alpha_i'^2"]  && Q_{\sigma_i^{-1}} \arrow[uu, shift right, swap, "1"]
\end{tikzcd}
\text{ for $\zeta_{\alpha_{i-1}} = \zeta_{\alpha_{i+1}}.$}
$$
We also extend the maps $\zeta_{\alpha_j}$ to general linear combinations $\zeta_\alpha:C(\sigma_i^{-1}) \to C(\sigma_i^{-1})$.

Having defined the maps $\zeta_\alpha: C(\sigma_i^{\pm 1}) \to C(\sigma_i^{\pm 1})$ on single crossings, we may extend them to arbitrary braids $\beta$ inductively via the Leibniz rule. If $\beta = \beta_1\beta_2$ is a composition of braids, then recall that $C(\beta_1\beta_2) =  C(\beta_1) \otimes_{\F[\gamma_1, \dots, \gamma_{n-1}]} C(\beta_2)$ and define $\zeta_\alpha^\beta: C(\beta_1\beta_2) \to  C(\beta_1\beta_2)$ via $\zeta_\alpha^\beta = \zeta_\alpha^{\beta_1} \otimes \id + \id \otimes \zeta_{f_{\beta_1}(\alpha)}^{\beta_2}$. More explicitly, if $x \in C(\beta_1)$ and $y \in C(\beta_2)$, then $\zeta_{\alpha}^\beta(x \otimes y) = \zeta_{\alpha}^{\beta_1}(x) \otimes y + x \otimes \zeta_{f_{\beta_1}(\alpha)}^{\beta_2}(y)$.
\begin{lemma}\label{lemma:[d,zeta]}
Let $\beta \in \Br_n$ be a braid and $\alpha$ a linear combination of $\alpha_1, \dots, \alpha_{n-1}$. Then $[d, \zeta_\alpha] = \alpha - f_\beta(\alpha)$. 
\end{lemma}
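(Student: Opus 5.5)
Here $d$ is the total (Rouquier/vertical) differential and $\alpha$, $f_\beta(\alpha)$ act by multiplication via the bottom and top actions. The plan is induction on the number of crossings $c$, using the Leibniz definition of $\zeta^\beta_\alpha$ and the multiplicativity $f_{\beta_1\beta_2} = f_{\beta_2} f_{\beta_1}$. Since both sides of the identity are $\F$-linear in $\alpha$, it suffices to treat $\alpha = \alpha_j$ for each $j$.

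\textbf{Base case, $c=0$.} Here $\zeta = 0$ and $f_1(\alpha_j) = \alpha_j'$, which equals $\alpha_j$ on $R$, so both sides vanish.

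\textbf{Base case, $\beta = \sigma_i$.}
\begin{itemize}
\item For $|i-j|>1$: $\zeta_{\alpha_j}=0$ and $f(\alpha_j)=\alpha_j'=\alpha_j$ in $Q_{\sigma_i}$, so again both sides vanish.
\item For $j=i$: $f(\alpha_i) = -\alpha_i'$, so the target is multiplication by $\alpha_i+\alpha_i'$. In the Rouquier picture, $d\zeta+\zeta d$ on $\B_i$ is $(\alpha_i+\alpha_i')\cdot 1$, and on $R$ it is $1\cdot(\alpha_i+\alpha_i')$, which is the required multiplication. In the Rasmussen double complex the same check is done entrywise on the four corners, using the commutative squares drawn above.
\item For $j=i\pm1$: $f(\alpha_{i\pm1}) = \alpha_{i\pm1}'+\alpha_i'$. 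Using $\alpha_{i\pm1}' = \alpha_{i\pm1}+\tfrac12(\alpha_i-\alpha_i')$, we get $\alpha_{i\pm1}-f(\alpha_{i\pm1}) = -\tfrac12(\alpha_i+\alpha_i')$, which matches the coefficient $-\tfrac12$ in $\zeta_{\alpha_{i\pm1}}$.
\end{itemize}
The negative crossing $\sigma_i^{-1}$ is handled identically with the roles of the two terms swapped. I would note that the Rouquier pictures are stated with $R$ where $q^{\pm1}R$ is meant; the grading shifts do not affect the check.

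\textbf{Inductive step.} Write $\beta=\beta_1\beta_2$ and $C(\beta)=C(\beta_1)\otimes_{\F[\gamma]} C(\beta_2)$, with $d = d_1\otimes\id + \id\otimes d_2$ (Koszul signs included) and $\zeta^\beta_\alpha = \zeta^{\beta_1}_\alpha\otimes\id + \id\otimes\zeta^{\beta_2}_{f_{\beta_1}(\alpha)}$. Since $\zeta$ is odd, the Koszul signs make the cross terms of $[d,\zeta]$ cancel, leaving
$$[d,\zeta_\alpha^\beta] = [d_1,\zeta^{\beta_1}_\alpha]\otimes\id + \id\otimes[d_2,\zeta^{\beta_2}_{f_{\beta_1}(\alpha)}].$$
By the inductive hypothesis this equals
$$\bigl(\alpha - f_{\beta_1}(\alpha)\bigr)\otimes\id + \id\otimes\bigl(f_{\beta_1}(\alpha) - f_{\beta_2}f_{\beta_1}(\alpha)\bigr).$$
The top action of $\beta_1$ is identified with the bottom action of $\beta_2$ in the tensor product, so the two middle terms cancel. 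What remains is $\alpha - f_\beta(\alpha)$, as required.

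\textbf{Main obstacle.} The delicate point is the sign bookkeeping for the odd map $\zeta$ in the tensor product, i.e.\ whether the Leibniz rule carries the Koszul sign $(-1)^{|x|}$ on the second term. I would fix this convention explicitly and verify that the cross terms $d_1\otimes\zeta_2$ and $\zeta_1\otimes d_2$ cancel. The second subtlety is making precise that $\alpha'$ acting on $C(\beta_1)$ equals $\gamma$ acting on $C(\beta_2)$, which is exactly Proposition~\ref{prop:braid rings}. Finally, I would interpret $[d,\zeta]$ on the Rasmussen complex as the commutator with the vertical differential only, since the horizontal differential commutes with the $\zeta$ maps by the commutativity of the squares defining them.
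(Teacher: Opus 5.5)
Your proposal is correct and matches the paper's proof: reduce to $\alpha=\alpha_j$ by linearity, check the cases $|i-j|>1$, $j=i$, $j=i\pm1$ on a single crossing, then expand the commutator for $\beta=\beta_1\beta_2$ so that the cross terms vanish and the $f_{\beta_1}(\alpha)$ terms cancel via the identification of the top action of $\beta_1$ with the bottom action of $\beta_2$. Your insistence on the Koszul sign $(-1)^{|x|}$ in the Leibniz rule for $\zeta$ is more careful than the paper, whose displayed formula $\zeta_\alpha^\beta(x\otimes y)=\zeta_\alpha^{\beta_1}(x)\otimes y + x\otimes\zeta^{\beta_2}_{f_{\beta_1}(\alpha)}(y)$ omits the sign even though its claim that the cross terms vanish tacitly relies on it.
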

\begin{proof}
Since both sides of the equation are linear in $\alpha$, it suffices to establish the claim for $\alpha \in \{\alpha_1, \dots, \alpha_{n-1}\}$. Begin with $\beta = \sigma_i^{\pm 1}$. If $\alpha = \alpha_j$ for $j$ with $|i-j|>1$, then $\zeta_\alpha = 0$ so $[d, \zeta_\alpha] = 0$ and also $f_{\sigma_i^{\pm 1}}(\alpha) = \alpha'$. Hence $\alpha - f_{\sigma_i^{\pm 1}}(\alpha) = \alpha - \alpha' = 0$. If $\alpha = \alpha_i$, then $[d, \zeta_\alpha] = \alpha_i + \alpha_i' = \alpha_i - f_{\sigma_i^{\pm 1}}(\alpha_i)$ as required. Finally, if $\alpha = \alpha_{i \pm 1}$, then $\alpha-f_{\sigma_i^{\pm 1}}(\alpha) = \alpha_{i\pm 1} - (\alpha_{i\pm 1}'+\alpha_i') = \alpha_{i\pm 1} - (\alpha_{i\pm 1} + \frac{1}{2}(\alpha_i-\alpha_i') + \alpha_i') = -\frac{\alpha_i+\alpha_i'}{2} = [d, \zeta_\alpha]$ as required. This established the claim whenever $\beta$ is a single crossing. More generally, let $\beta = \beta_1\beta_2$. Then
\begin{align*}
[d, \zeta_\alpha] &= [d\otimes\id+\id\otimes d, \zeta_\alpha \otimes \id + \id \otimes \zeta_{f_{\beta_1}(\alpha)}]\\
&= [d\otimes \id, \zeta_\alpha \otimes \id] + [d\otimes \id, \id \otimes \zeta_{f_{\beta_1}(\alpha)}] + [ \id\otimes d, \zeta_\alpha \otimes \id] + [\id\otimes d, \id \otimes \zeta_{f_{\beta_1}(\alpha)}]\\
&= [d, \zeta_\alpha] \otimes \id + 0 + 0 + \id \otimes [d, \zeta_{f_{\beta_1}(\alpha)}]\\
&= \alpha - f_{\beta_1}(\alpha) + f_{\beta_1}(\alpha) - f_{\beta_2}(f_{\beta_1}(\alpha))\\
&= \alpha - f_\beta(\alpha)
\end{align*}
by induction.
\end{proof}

\section{Connected sum}
This section describes how Khovanov-Rozansky homology behaves under connected sum of knots. Our proof follows \cite[Lemma 7.8]{rasmussen2016some}.
\begin{theorem}\label{thm:connected sum}
$\widehat{\mathit{H}}(K_1 \# K_2) \cong \widehat{\mathit{H}}(K_1) \otimes_\F \widehat{\mathit{H}}(K_2)$.
\end{theorem}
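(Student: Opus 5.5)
I will follow Rasmussen's argument \cite[Lemma 7.8]{rasmussen2016some}, adapted to the region variables and the reduced setting. Represent $K_1 = \overline{\beta_1}$ with $\beta_1 \in \Br_{n}$ and $K_2 = \overline{\beta_2}$ with $\beta_2 \in \Br_m$. After conjugating, I may assume that the last strand of $\beta_1$ and the first strand of $\beta_2$ are the ones to be joined. Then $K_1 \# K_2$ is the closure of the braid $\beta = \beta_1 \cdot \mathrm{sh}(\beta_2) \in \Br_{n+m-1}$, where $\mathrm{sh}(\beta_2)$ is $\beta_2$ shifted to act on strands $n, \dots, n+m-1$. In this braid the two pieces share exactly one strand, the $n^{\text{th}}$. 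Since $K_1 \# K_2$ is a knot, Theorem \ref{thm:equivalence Ras and Kho} lets me compute $\widehat{\mathit{H}}$ from $C_{\Ras}$. That is, I take $\mathrm{HH}(C_\Rou(\beta))$ to be the homology of $C_{\Ras}(\beta) \otimes_{\F[\alpha,\alpha']} R$, which amounts to imposing $\alpha_j = \alpha_j'$ for all $j$ on the free complex $C_{\Ras}(\beta)$.

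The key structural step is a decomposition of the ring. The regions of $\beta$ split into three groups:
\begin{enumerate}
\item the regions of $\beta_1$, namely bottom variables $\alpha_1, \dots, \alpha_{n-1}$ and interior ones;
\item the regions of $\mathrm{sh}(\beta_2)$, namely bottom variables $\alpha_{n}, \dots, \alpha_{n+m-2}$ and interior ones;
\item no further regions, because the crossings of $\mathrm{sh}(\beta_2)$ are stacked above those of $\beta_1$, and a braid composition identifies only the bottom regions of the upper piece.
\end{enumerate}
The concern is that the top action formulas mix neighbouring variables. For a crossing $\sigma_{n-1}$ of $\beta_1$ one has $\alpha_n' = \alpha_n + \tfrac12(\alpha_{n-1}-\alpha_{\text{new}})$, and $\alpha_n$ is the region to the right of the shared strand. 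So the top variables of the composite are not simply split.

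To deal with this, I will first prove $C_{\Ras}(\beta) \otimes R \cong (C_{\Ras}(\beta_1)\otimes R)\otimes_\F (C_{\Ras}(\beta_2)\otimes R)$ up to quasi-isomorphism, using the Künneth theorem over $\F$. My first attempt is a linear change of region variables that restores the splitting. I would try the $X$-variables of the introduction, $\alpha_i = X_{i+1}-X_i$, with the shared strand's variable $X_n$ serving as a common origin. In terms of $X$ the top action is just a permutation of strand labels, which decouples: $\beta_1$ permutes $X_1,\dots,X_n$ and $\mathrm{sh}(\beta_2)$ permutes $X_n,\dots,X_{n+m-1}$. Because both closures are knots, the strand $n$ returns to itself in the closed-up picture only after traversing each knot, so after closure the Hochschild relations $\alpha_j=\alpha_j'$ identify all $X$'s of $K_1$ with each other and all of $K_2$ with each other. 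Both identifications then coincide through the common $X_n$. Working with differences relative to $X_n$ (the reduced normalization) should make the complex literally a tensor product over $\F$ of the two reduced closed complexes, with each factor's $X_n$-direction quotiented out. The remaining task is to check that the Koszul-type resolution of $R$ over $\F[\alpha,\alpha']$ for $n+m-1$ strands factors as the tensor product of those for $n$ and $m$ strands. This holds because the relations $\alpha_j-\alpha_j'$ split into the two index blocks once expressed in the decoupled coordinates.

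For the gradings, the Hochschild gradings add, $\gr_q$ and $\gr_R$ add, and the writhe is additive. The normalizing shifts by $n-1$, $m-1$ and $(n+m-1)-1$ also add, so the isomorphism is trigraded in $(\gr_a,\gr_q,\gr_t)$.

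The main obstacle is the decoupling step. Rasmussen's non-reduced proof uses the $X$-variables, and in the $\alpha$-variables the top action near the shared strand genuinely mixes the two halves. I will need to verify that the coordinate change is an isomorphism of polynomial rings compatible with all the top actions, which reduces to checking a single crossing $\sigma_{n-1}^{\pm1}$ or $\sigma_n^{\pm1}$ adjacent to the shared strand. If this fails directly, my fallback is to filter $C_{\Ras}(\beta)\otimes R$ by the polynomial degree in the mixing variable and show that the resulting spectral sequence collapses, using that $\beta_1$ and $\beta_2$ individually close to knots.
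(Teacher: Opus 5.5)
Your overall route is the paper's: realize $K_1\# K_2$ as the closure of a braid in which $\beta_1$ and a shifted $\beta_2$ share one strand, split $C_{\Ras}(\beta)$ as a tensor product over $\F$, and apply K\"unneth. You are also right that the top action mixes the two halves next to the shared strand. But your mechanism for undoing that mixing rests on false statements.

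On $Q_\beta$ the top action is \emph{not} a permutation of strand labels. The new region variable at a crossing is free, and in $X$-terms only $X_i+X_{i+1}=X_i'+X_{i+1}'$ holds. The top action agrees with the permutation map $f_\beta$ only up to homotopy, by $[d,\zeta_\alpha]=\alpha-f_\beta(\alpha)$, which is useless for a chain-level K\"unneth argument. Likewise, closing up does not identify the edge variables of a component. For $\sigma_1\in\Br_2$ the closed ring is $Q_{\sigma_1}/(\alpha_1-\alpha_2)\cong\F[\alpha_1]$, not $\F$; if your claim held, the closed reduced ring of $K_1\# K_2$ would collapse to $\F$. Most importantly, the relations $\alpha_j-\alpha_j'$ do \emph{not} split into two index blocks, and that is precisely the step your argument needs.

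Here is what works. Take the \emph{junction} point of the shared strand as origin, not its bottom endpoint. Equivalently, replace $\alpha_n$ by $\tilde\alpha_n=\alpha_n+L$, where $L=\tfrac12\sum(\alpha_{\mathrm{bot}}-\alpha_{\mathrm{top}})$ runs over the crossings of $\beta_1$ in gap $n-1$. Then $Q_\beta=Q_{\beta_1}\otimes_\F Q_{\beta_2}$, and $C_{\Ras}(\beta)\cong C_{\Ras}(\beta_1)\otimes_\F C_{\Ras}(\beta_2)$ as double complexes; this is what the paper asserts.

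The closure relations, however, become $\alpha_{n-1}-\alpha_{n-1}'=\delta^{(1)}_{n-1}-L'$ and $\alpha_n-\alpha_n'=\delta^{(2)}_1-L$. Here $\delta^{(1)}_i$ and $\delta^{(2)}_k$ are the two blocks' own closure relations, and $L'$ is the analogous contribution of $\beta_2$'s crossings in gap $n$ to gap $n-1$. The missing idea is that each cross term lies in the span of its own block's relations. This is conservation of $\sum X_j$ at every crossing, which in region variables reads $nL=\sum_{i=1}^{n-1}i\,\delta^{(1)}_i$ and $mL'=\sum_{k=1}^{m-1}(m-k)\,\delta^{(2)}_k$; both can be checked one crossing at a time. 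The resulting change of basis between the $n+m-2$ relations has determinant $(n+m-1)/(nm)$. So when these integers are invertible in $\F$, $C_{\Ras}(\beta)\otimes R$ is isomorphic as a double complex to $(C_{\Ras}(\beta_1)\otimes R)\otimes_\F(C_{\Ras}(\beta_2)\otimes R)$. In your $X$-language, the shared strand's relation is the sum of one relation from each block and is redundant by conservation.

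K\"unneth must then be applied to this honest isomorphism of double complexes, first for $d_+$ and then for $d$, since $\widehat{\mathit{H}}$ is an iterated homology. A quasi-isomorphism of total complexes, or your unspecified filtration fallback, would not suffice. The paper's proof does not discuss the closure relations either, so this identity is what completes the argument.
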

\begin{proof}
Let $\beta_1 \in \Br_{n_1}$ and $\beta_2 \in \Br_{n_2}$ be braids with $c_1$ and $c_2$ crossings respectively such that $K_1 = \overline{\beta_1}$ and $K_2 = \overline{\beta_2}$. Consider the knot diagram $D$ of $K_1 \# K_2$ obtained as a closure of an $n_1 + n_2 -1$-strand braid $\beta$ as depicted in Figure \ref{fig:connected sum}.
\begin{figure}[t]
    \begin{tikzpicture}[scale=0.9]
    \draw[very thick] (0, 0) to (0, 3.5);
    \draw[very thick] (1, 0) to (1, 3.5);
    \draw[very thick] (2, 0) to (2, 3.5);
    \draw[very thick] (3, 0) to (3, 1);
    \draw[very thick] (4, 0) to (4, 1);
    \draw[very thick] (5, 0) to (5, 1);
    \draw[very thick] (2.5, 1) rectangle ++(3,1.5);
    \draw[very thick] (-0.5, 3.5) rectangle ++(4,1.5);

    \draw[very thick] (0, 5) to (0, 6);
    \draw[very thick] (1, 5) to (1, 6);
    \draw[very thick] (2, 5) to (2, 6);
    \draw[very thick] (3, 5) to (3, 6);
    \draw[very thick] (3, 2.5) to (3, 3.5);
    \draw[very thick] (4, 2.5) to (4, 6);
    \draw[very thick] (5, 2.5) to (5, 6);

    \draw[] (4,2) node[anchor = north]{$\beta_1$};
    \draw[] (1.5, 4.5) node[anchor = north]{$\beta_2$};

    \draw[] (3.5, 0.75) node[anchor = north]{$\alpha_{1, 1}$};
    \draw[] (4.5, 0.75) node[anchor = north]{$\alpha_{1, 2}$};
    \draw[] (0.5, 3.25) node[anchor = north]{$\alpha_{2, 1}$};
    \draw[] (1.5, 3.25) node[anchor = north]{$\alpha_{2, 2}$};
    \draw[] (2.5, 3.25) node[anchor = north]{$\alpha_{2, 3}$};
    \end{tikzpicture}  
    \caption{A $n_1+n_2-1$-strand braid $\beta$ appearing in the proof of Theorem \ref{thm:connected sum}. Its closure is a knot diagram of $\overline{\beta_1} \# \overline{\beta_2}$.}
    \label{fig:connected sum}
\end{figure}

Label all regions of $D$ such that $\alpha_{1,1}, \dots, \alpha_{1,n_1-1+c_1}$ are the regions on the ``$\beta_1$ side'' of the diagram and $\alpha_{2,1}, \dots, \alpha_{2, n_2-1+c_2}$ are the regions on the ``$\beta_2$ side'' of the diagram. Therefore $Q_\beta = \F[\alpha_{1, 1}, \dots, \alpha_{1, n_1-1+c_1}, \alpha_{2, 1}, \dots, \alpha_{2, n_2-1+c_2}]$ by Definition \ref{def:Rasmussen's ring}. We can write $\beta = (\id_{n_2-1} \sqcup \beta_1) (\beta_2 \sqcup \id_{n_1-1})$, where $\id_n$ denotes the identity braid on $n$ strands. Consider now the Rasmussen's rings and chain complexes of $\id_{n_2-1} \sqcup \beta_1$ and $\beta_2 \sqcup \id_{n_1-1}$. We have $Q_{\id_{n_2-1} \sqcup \beta_1} = \F[\alpha_{1, 1}, \dots, \alpha_{1, n_1-1+c_1}, \alpha_{2, 1}, \dots, \alpha_{2, n_2-1}]$ since there are no crossings in $\id_{n_2-1} \sqcup \beta_1$ between the first $n_2$ strands. It follows that $C_{\Ras}(\id_{n_2-1} \sqcup \beta_1) = C_{\Ras}(\beta_1) \otimes_\F \F[\alpha_{2, 1}, \dots, \alpha_{2, n_2-1}]$. Similar formulas are obtained for $Q_{\beta_2 \sqcup \id_{n_1-1}}$ and $C_{\Ras}(\beta_2 \sqcup \id_{n_1-1})$. Finally, we can compute $C_{\Ras}(\beta)$ as a tensor product of Rasmussen's complexes by Definition \ref{def:tensor product of Rasmussen's complexes}. We have
\begin{align*}
C_{\Ras}(\beta) &= (C_{\Ras}(\id_{n_2-1} \sqcup \beta_1) \otimes_{Q_{\id_{n_2-1} \sqcup \beta_1}} Q_\beta) \otimes_{Q_\beta} (C_{\Ras}(\beta_2 \sqcup \id_{n_1-1}) \otimes_{Q_{\beta_2 \sqcup \id_{n_1-1}}} Q_\beta)\\
&=(C_{\Ras}(\beta_1) \otimes_\F \F[\alpha_{2, 1}, \dots, \alpha_{2, n_2-1}] \otimes_{\F[\alpha_{1, 1}, \dots, \alpha_{1, n_1-1+c_1}, \alpha_{2, 1}, \dots, \alpha_{2, n_2-1}]} Q_\beta) \otimes_{Q_\beta}\\
& \quad \quad (C_{\Ras}(\beta_2) \otimes_\F \F[\alpha_{1, 1}, \dots, \alpha_{1, n_1-1}] \otimes_{\F[\alpha_{2, 1}, \dots, \alpha_{2, n_2-1+c_2}, \alpha_{1, 1}, \dots, \alpha_{1, 1_2-1}]} Q_\beta)\\
& \cong (C_{\Ras}(\beta_1) \otimes_{\F} \F[\alpha_{2, 1}, \dots, \alpha_{2, n_2-1+c_2}]) \otimes_{Q_\beta} (C_{\Ras}(\beta_2) \otimes_{\F} \F[\alpha_{1, 1}, \dots, \alpha_{1, n_1-1+c_1}])\\
& \cong C_{\Ras}(\beta_1) \otimes_\F C_{\Ras}(\beta_2).
\end{align*}
Taking homology with respect to the Hochschild and Rasmussen's differentials by using the K\"unneth theorem establishes the isomorphism on homology.
\end{proof}
\begin{remark}
Note that the proof of the theorem establishes the isomorphism on the level of chain complexes while it only uses their formal properties -- the details of the differentials are not important. As such, the proof is indifferent to the presence of extra structure and moreover shows that connected sums commute with both Rasmussen's spectral sequences from Section \ref{sec:higher differentials} and dot-sliding homotopies from Section \ref{sec:dot-sliding homotopies}.
\end{remark}

\bibliography{mybib.bib}
\bibliographystyle{alpha}
\end{document}